\newcommand{\phip}{\phi_{P^o} }
\newcommand{\QQ}{\mathcal{Q}}
\newcommand{\T}{{\mathbf T}^m}
\newcommand{\szego}{Szeg\"o }
\newcommand{\kahler}{K\"ahler }
\newcommand{\PP}{{\mathbb P}}
\newcommand{\R}{{\mathbb R}}
\newcommand{\C}{{\mathbb C}}
\newcommand{\Q}{{\mathbb Q}}
\newcommand{\Z}{{\mathbb Z}}
\newcommand{\N}{{\mathbb N}}
\newcommand{\CP}{\C\PP}
\newcommand{\dbar}{\bar\partial}
\newcommand{\ddbar}{\partial\dbar}
\newcommand{\E}{{\mathbf E}\,}
\renewcommand{\phi}{\varphi}
\newcommand{\bcal}{\mathcal{B}}
\newcommand{\ccal}{\mathcal{C}}
\newcommand{\dcal}{\mathcal{D}}
\newcommand{\ecal}{\mathcal{E}}
\newcommand{\gcal}{\mathcal{G}}
\newcommand{\hcal}{\mathcal{H}}
\newcommand{\lcal}{\mathcal{L}}
\newcommand{\pcal}{\mathcal{P}}
\newcommand{\ocal}{\mathcal{O}}
\def    \Z  {{\mathbb Z}}
\def    \R  {{\mathbb R}}
\def    \C  {{\mathbb C}}
\newtheorem{theo}{{\sc Theorem}}[section]
\newtheorem{cor}[theo]{{\sc Corollary}}
\newtheorem{lem}[theo]{{\sc Lemma}}
\newtheorem{prop}[theo]{{\sc Proposition}}
\newenvironment{rem}{\medskip\noindent{\it Remark:\/} }{\medskip}
\newenvironment{defin}{\medskip\noindent{\it Definition:\/} }{\medskip}
\title[Test configurations, large deviations  and geodesic rays  on  toric varieties]
{Test configurations, large deviations  and geodesic rays on toric
varieties }
\author{Jian Song and Steve Zelditch }
\address{Department of Mathematics \\ Rutgers University, Piscataway, NJ 08854, USA}
\email{jiansong@math.rutgers.edu}
\address{Department of Mathematics, Johns Hopkins University, Baltimore,
MD 21218, USA} \email{zelditch@math.jhu.edu}
\thanks{Research partially supported by
 NSF grants DMS-0604805 and
 DMS-0603850.}
\date{\today}
\begin{document}

\begin{abstract}

This article contains  a detailed study in the case of a toric variety of the geodesic rays $\phi_t$  defined by Phong-Sturm corresponding to
 test configurations $T$ in the sense of Donaldson. We show that the `Bergman approximations' $\phi_k(t,z)$
  of Phong-Sturm converge in $C^1$ to the geodesic ray $\phi_t$, and that
the geodesic ray itself is $C^{1,1}$ and no better. In particular, the \kahler metrics $\omega_t = \omega_0 + i \ddbar \phi_t$
 associated to the  geodesic ray
of potentials  are discontinuous
across certain hypersurfaces and are degenerate on certain open sets.

A novelty in the analysis is the connection between Bergman metrics, Bergman kernels  and the theory of large deviations.
We construct  a sequence of measures $\mu_k^z$  on the polytope of
the toric variety, show that they satisfy a large deviations principle, and relate the rate function to the geodesic ray.

\end{abstract}

\maketitle

\tableofcontents

\section{Introduction}

This article is inspired by recent work of Phong-Sturm \cite{PS2}
on {\it test configurations} and geodesic rays for ample line
bundles $L \to M$ over \kahler manifolds $(M, \omega)$.  The main
construction in \cite{PS2} associates to a test configuration $T =
(\lcal \to \chi \to \C)$ and a hermitian metric $h_0$ on $L$ an
infinite geodesic ray  $R(h_0, T) = e^{- \psi_t} h_0$  starting at
$h_0$ in the infinite dimensional symmetric space $\hcal$ of
hermitian metrics on $L$ in a fixed \kahler class in the sense of
Mabuchi, Semmes and Donaldson \cite{M,S,D1}. At this time, it
seems to be the only known construction of an infinite geodesic
ray with given initial point in the incomplete space $\hcal$ (see
also \cite{AT} for other constructions of geodesic rays). The
 geodesic rays in \cite{PS2} are constructed by taking limits of `Bergman
 geodesic rays', i.e. geodesic rays  in the  finite dimensional symmetric spaces $\bcal_k$ of
 Bergman metrics. The purpose of this paper is to analyze the
 construction in detail in the case of  toric test
configurations on toric varieties. We give  explicit formulae
for the Bergman geodesic rays and for  the limit geodesic ray. The formulae   show
clearly  that the
geodesic rays produced by toric  test configurations are $C^{1,1}$
and not $C^2$, and that the approximating Bergman geodesics
converge in $C^1 ([0, L] \times M)$ for any $L > 0$. Furthermore,
the metrics $\omega_t = \omega_0 + i \ddbar \psi_t$ are only
semi-positive for $t > 0$, i.e. $\omega_t^m = 0$ on certain open
sets (cf. Theorem \ref{GEO}). Hence, both in terms of regularity
and positivity, the rays lie in some sense on the boundary of
$\hcal$, and we obtain a weak solution of the Monge-Amp\`ere
equation which saturates the known $C^{1,1}$ regularity results
\cite{Ch}.

Two  other articles have recently appeared which contain regularity results on
test configuration geodesic rays. First,
 $C^{1,1}$  geodesic rays are constructed by  Phong-Sturm \cite{PS3} from
  any test configuration using a resolution of singularities. At least when  the total space of the test configuration
  is smooth, the rays must be the limits of  Bergman rays. In our toric setting, the total space is never smooth and
  it is not clear at present how our regularity results overlap.
 Second, the $C^{1,1}$ regularity of test configuration geodesic
 rays with smooth total space was also observed in the  article \cite{CT} of  Chen-Tang.
  The authors also  give examples of toric test configuration geodesic rays which  are not smooth \cite{CT}.

In proving the convergence result, we employ a novel connection
between analysis on toric varieties  and  the theory of large
deviations. As we will show in Theorems \ref{GEO} and \ref{LDT},
the Phong-Sturm geodesic ray $\psi_t$ arises from Varadhan's Lemma
applied to a family $\{\mu_k^z\}$ of probability measures on the
polytope $P$ of $M$ which are defined by  the test configuration.
It  is  closely related to the rate functional of a large
deviations principle for another `time-tilted' family
$\mu_k^{z,t}$ of probability measures on $P$. We believe that this
connection is of independent interest and therefore develop it in
some depth for its own sake. The measures $\mu_k^z$ are closely
related to  the random variables and probability measures on $\R$
defined in \cite{PS} for the geodesic problem on a general \kahler
variety (see the first remark in Section 5). To be precise, the
Phong-Sturm measures are the pushforwards to $\R$ under a certain
function of the $\mu_k^z$. In subsequent  work, we study the
asymptotic properties and large deviations properties of the $\R$
measures on any \kahler manifold.

To state our results, we need some notation.  Let $M$ be a smooth
$m$-dimensional  toric variety, let $\T$ denote the real torus
$(S^1)^m$ which acts on $M$, and let $L \to M$ be a very ample
toric line bundle. Let $h_0 \in \hcal$ be a positively curved
reference metric on $L$ in the give \kahler class, let $\omega_0$
denote its curvature $(1,1)$ form and let $\mu_0$ denote the
moment map for the $\T$ action on $M$ with respect to $\mu_0$.  We
denote by  $T$ a test configuration. In the case of a toric
variety, Donaldson \cite{D1} shows that general toric test
configurations are determined by rational piecewise linear convex
functions
\begin{equation} f = \max\{\lambda_1, \dots, \lambda_p\}, \;\;
\mbox{with} \;\;\lambda_j (x) = \langle \nu_j, x\rangle + v_j
\end{equation}
on the polytope $P$ of the toric variety, where  $\lambda_j (x)$
are affine-linear functions
 with rational
coefficients. Roughly speaking, the graph of $R - f(x)$ for a
large integer $R > 0$ is the `top' of an $m + 1$-dimensional
polytope $Q$ with base the $m$-dimensional polytope $P$ and the
degeneration occurs as one moves from the bottom to the top. By
multiplying $f$ by $d$ we may assume the affine functions
$\lambda_j$ have integral coefficients. We denote by $P_j \subset
P$ the subdomain where $f = \lambda_j$.

In \cite{PS2}, the geodesic ray $e^{- \psi_t} h_0$ is constructed
as a limit of  Bergman geodesic rays $h(t; k)= h_0 e^{-
\psi_k(t,z)}$ which are constructed from the test configuaration
$T$ (see Definition \ref{PSGR}). In the following Proposition, we
given an exact formula for $\psi_k$ in the case of a toric test
configuration. It is stated in terms of monomial sections
corresponding to lattice points in the polytope of $M$.  We refer
to \S \ref{BACKGROUND} for as yet undefined terminology.

\begin{prop} \label{INTROPROP} Let $(M, L, h_0, \omega_0)$ be a polarized toric \kahler
variety of dimension $m$,
  and let $P$ denote the
corresponding lattice polytope. Then the Phong-Sturm sequence of
approximating Bergman geodesics is given by
\begin{equation} \label{toricphik} \psi_k(t, z) = \frac{1}{kd} \log
\hat{Z}_k(t, z)
\end{equation}
with
\begin{equation} \label{toricFk}\left\{ \begin{array}{l}  \hat{Z}_k(t, z) =  e^{- 2 t
\frac{1}{d_k} \sum_{\alpha \in k d P \cap \Z^m} k d( R -
f(\frac{\alpha}{kd}))} Z_k^{z,t}, \\ \\
Z_k^{t,z} : = \sum_{\alpha \in k d P \cap \Z^m} e^{2 t kd ( R -
f(\frac{\alpha}{kd}))} \frac{||s_{\alpha}(z)||^2_{h_0^{
kd}}}{\QQ_{h_0^{k d}}(\alpha)}, \end{array} \right.
\end{equation}
where $\{s_{\alpha}\}$ is the basis of $H^0(M, L^{kd})$ corresponding
to the monomials $z^{\alpha}$ on $\C^m$ with $\alpha \in k d P$, $d_k = H^0(X, L^{kd})$,
and where $\QQ_{h_0^{k d}}(\alpha)$ is the square of its $L^2$
norm with respect to the inner product $Hilb_{kd}(h_0)$ induced by
$h_0$ (see (\ref{HILB}), (\ref{QFORM}) and (\ref{SPNORM}) for the
precise formula).

\end{prop}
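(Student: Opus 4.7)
The plan is to unwind the Phong-Sturm construction (Definition \ref{PSGR}) in the toric setting, where everything diagonalizes simultaneously in the monomial basis of $H^0(M, L^{kd})$ indexed by lattice points $\alpha \in kd P \cap \Z^m$. The Phong-Sturm Bergman geodesic at level $k$ is obtained from a test configuration by constructing a one-parameter subgroup $\sigma_k(t) \subset GL(H^0(M, L^{kd}))$ (coming from the $\C^*$-action on the test configuration at level $kd$), and then writing the geodesic in $\bcal_k$ as the Fubini-Study metric pulled back by $\sigma_k(t)$. Concretely, if $\{s_\alpha\}$ is a basis of $H^0(M, L^{kd})$ that is orthonormal for the inner product $\textrm{Hilb}_{kd}(h_0)$ (equivalently, scale each $s_\alpha$ by $\QQ_{h_0^{kd}}(\alpha)^{-1/2}$), and if $\sigma_k(t)$ acts diagonally with weights $e^{t w_\alpha}$, then the Bergman geodesic potential on the ray takes the form
\begin{equation*}
\psi_k(t,z) \;=\; \frac{1}{kd}\log \sum_{\alpha} e^{2 t w_\alpha}\,\frac{\|s_\alpha(z)\|^2_{h_0^{kd}}}{\QQ_{h_0^{kd}}(\alpha)} \;-\; (\text{trace normalization}).
\end{equation*}

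The next step is to identify the weights $w_\alpha$ explicitly in terms of the piecewise linear function $f = \max_j \lambda_j$. By Donaldson's dictionary, a toric test configuration is constructed by taking the polytope $Q \subset \R^{m+1}$ whose roof is the graph of $R - f(x)$ over $P$, and equivariantly embedding $M$ into a projective space whose homogeneous coordinates correspond to lattice points of $kdQ$. The $\C^*$-action on this embedding shifts the last coordinate, so on the monomial $s_\alpha$ (sitting at height $kd(R - f(\alpha/kd))$ inside $kdQ$) it acts with weight $kd(R - f(\alpha/kd))$. Hence the one-parameter subgroup $\sigma_k(t)$ is diagonal in the monomial basis with weight $w_\alpha = kd\bigl(R - f(\alpha/kd)\bigr)$. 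The factor $2$ in the exponent of (\ref{toricFk}) comes from the fact that the action on the Hermitian metric is through $\sigma_k(t)^*\sigma_k(t)$, doubling the weights.

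Inserting these weights produces exactly $Z_k^{t,z}$. The remaining trace normalization subtracts off the average weight $\frac{1}{d_k}\sum_\alpha kd(R - f(\alpha/kd))$ (which puts $\sigma_k(t)$ into $SL$ and makes the geodesic ray normalized in the Mabuchi/Semmes/Donaldson sense). Multiplying by $e^{-2t \cdot (\text{avg weight})}$ inside the logarithm gives $\hat Z_k(t,z)$, yielding (\ref{toricphik}).

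The main obstacle is bookkeeping: matching Donaldson's combinatorial description of toric test configurations with the analytic definition of the Phong-Sturm one-parameter subgroup, and verifying that the factor $R$ (which cancels from the final geodesic up to an additive constant in $t$) and the choice to work at level $kd$ rather than $k$ (forced by integrality of the weights $\langle \nu_j, \alpha/d\rangle + v_j$) are consistent with the normalization conventions. Once the monomial basis is seen to diagonalize both $\textrm{Hilb}_{kd}(h_0)$ (by the $\T$-invariance of $h_0$) and $\sigma_k(t)$, the formula is a straightforward transcription of the general Bergman-geodesic formula, with no analytic content beyond the identification of the weights.
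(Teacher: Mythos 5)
Your proposal is correct and follows essentially the same route as the paper: unwind Definition \ref{PSGR} in the diagonalizing monomial basis of $H^0(M,L^{kd})$, identify the $\C^*$-weight of $s_\alpha$ as $kd\bigl(R-f(\alpha/kd)\bigr)$ by viewing $s_\alpha$ as the monomial $z^\alpha w^{kd(R-f(\alpha/kd))}$ sitting at height $kd(R-f(\alpha/kd))$ inside $kdQ$ (this is the paper's Proposition \ref{PROPWEIGHTS}), and then pass to the normalized (traceless) weights, which produces the prefactor $e^{-2t\cdot(\text{avg weight})}$. The only minor point you did not address is the harmless $k^{-n}$ prefactor in Definition \ref{PSGR}, which the paper also quietly drops since it only contributes $O(\log k/k)$ to $\psi_k$.
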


The Phong-Sturm geodesic ray is by definition   the  limit
$\psi_t(z)$ (in a certain topology) of the sequence $\psi_k(t,
z)$. Our next result gives an explicit formula for it.  One of the
main points of this article is that this relative \kahler
potential is naturally expressed in terms of the {\it rate
functions} $I^z$ for the large deviations principle of the
sequence of probability measures,
\begin{equation} \label{MUKZDEF} \mu_k^z = \frac{1}{\Pi_{h_0^{k d}}(z,z)}\;\; \sum_{\alpha \in kd P \cap \Z^m}
\frac{|s_{\alpha}(z)|_{h_0^{k d}}^2}{||s_{\alpha}||_{h_0^{k d}}^2}   \;
\delta_{\frac{\alpha}{k d}},
\end{equation} where $\Pi_{h_0^{k d}}(z,z)$ is the contracted \szego kernel on the
diagonal (or density of states); see \S \ref{BACKGROUND} for
background. We obviously have
\begin{equation} \label{PHIKTZINT} \psi_k(t,z) =  \frac{1}{k d} \log  \int_P e^{k dt (R - f(x))}
d\mu_k^z(x) +  2 t \frac{1}{d_k} \sum_{\alpha \in k d P \cap \Z^m}
k d( R - f(\frac{\alpha}{kd})),
\end{equation}
 and since the second term has an
obvious limit, the determination of $\psi_t$ reduces  to  the
uniform asymptotics of the first term. For notational simplicity
we henceforth often write
\begin{equation} \label{FT}
F_t(x) = t (R - f(x)). \end{equation}

Our first observation is the following measure concentration
result, which follows  easily from the Bernstein polynomial
results of \cite{Z2}.
\begin{prop} \label{LLNMUZ} Let $\mu_0: M \to P$ be the moment map
with respect to the symplectic form $\omega_0.$  Then  for any $z
\in M$,  the measures $\mu_k^z$ tend weakly to
$\delta_{\mu_0(z)}$. Thus,
$$\mu_0(z) = \lim_{k \to \infty} \frac{1}{\Pi_{h^{kd}}(z,z)} \sum_{\alpha \in k d P}
 (\frac{\alpha}{k d}) \;\; \frac{||s_{\alpha}(z)||^2_{h_0^{k d}}}{\QQ_{h_0^{k d}}(\alpha)}. $$
\end{prop}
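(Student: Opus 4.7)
The plan is to recognize the integral of any test function against $\mu_k^z$ as a Bernstein polynomial of the toric variety and then quote the known uniform asymptotic expansion from \cite{Z2}. Concretely, for $g \in C(P)$,
$$\int_P g(x) \, d\mu_k^z(x) = \frac{1}{\Pi_{h_0^{kd}}(z,z)} \sum_{\alpha \in kd P \cap \Z^m} g\Bigl(\frac{\alpha}{kd}\Bigr) \frac{|s_\alpha(z)|^2_{h_0^{kd}}}{\|s_\alpha\|^2_{h_0^{kd}}},$$
which is precisely the Bernstein polynomial $B_k g(z)$ attached to $(M,L,h_0,\omega_0)$ in \cite{Z2}. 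Because the coefficients are non-negative and sum to $1$ by the very definition of $\Pi_{h_0^{kd}}(z,z)$, each $\mu_k^z$ is a probability measure and the weak-$*$ convergence $\mu_k^z \to \delta_{\mu_0(z)}$ is equivalent to $B_k g(z) \to g(\mu_0(z))$ for every $g \in C(P)$.

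First I would invoke the Bernstein expansion of \cite{Z2}, which gives for smooth $g$ a full uniform asymptotic series whose leading term is $g \circ \mu_0$; the underlying mechanism is the off-diagonal Szegő/Bergman expansion, under which the monomial weights $|s_\alpha(z)|^2_{h_0^{kd}}/\|s_\alpha\|^2_{h_0^{kd}}$ concentrate in a Gaussian of width $O(k^{-1/2})$ around those $\alpha$ with $\alpha/(kd)$ closest to the moment image $\mu_0(z)$. Substituting the Taylor expansion of $g$ about $\mu_0(z)$ into the sum and discarding $O(k^{-1})$ moments recovers the leading-order identity $B_k g(z) = g(\mu_0(z)) + O(k^{-1})$, uniformly in $z$.

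To pass from smooth $g$ to merely continuous $g$, I would use a standard approximation argument: smooth functions are dense in $C(P)$ by compactness, the Bernstein operators $B_k$ are positivity preserving and send $1$ to $1$, and hence are contractions on $C(P)$. A three-epsilon argument then yields pointwise (in fact, uniform in $z$) convergence $B_k g(z) \to g(\mu_0(z))$ for all $g \in C(P)$, which is the desired weak convergence. The second displayed equality in the proposition is the special case in which $g$ is each of the $m$ coordinate functions of $P \subset \R^m$. The only point of genuine care is this last density step, but given the contraction property and the availability of the smooth asymptotic expansion from \cite{Z2}, it is routine; the real content of the proposition is subsumed in the Bernstein asymptotics already established there.
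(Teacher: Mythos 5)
Your proposal is correct and takes essentially the same route as the paper: both recognize $\int_P g\,d\mu_k^z$ as the Bernstein polynomial $B_k g$ of \cite{Z2} and cite its uniform convergence to $g\circ\mu_0$, then specialize to coordinate functions. The paper's proof simply states this in two lines; your version spells out the smooth-then-density argument that underlies the \cite{Z2} citation, but there is no difference in substance.
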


A much deeper result is that $\mu_k^z$ satisfy a large deviations
principle (LDP), and that the   logarithmic asymptotics in
(\ref{PHIKTZINT}) are therefore  determined by Varadhan's Lemma.
Heuristically,  an LDP means that the measure $\mu_k^z(A)$ of a
Borel set $A$ is obtained asymptotically by integrating $e^{- k
I^z(x)}$ over $A$, where $I^z$ is known as the rate functional and
$k$ is the rate.
 The rate functions $I^z$ for $\{d\mu_k^z\}$ depend on whether $z$ lies
 in the open orbit $M^o$ of $M$ or on the divisor at infinity $\dcal$;
 equivalently, they depend on whether the image $\mu_0(z)$ of $z$ under the
 moment map for $\omega_0$ lies in the  interior $P^o$
 of the polytope $P$ or  along a face $F$ of its boundary
 $\partial P$. The definition of uniformity of the Laplace large deviations
 principle will be given in section \S \ref{ULP}. The notation and terminology will be defined and
 reviewed in  \S \ref{BACKGROUND}.

\begin{theo}\label{LDINTRO}  For any  $z \in M$,  the probability measures
$\mu_k^z$  satisfy a uniform Laplace  large deviations principle
with rate $k$ and with convex rate functions $I^z \geq 0$ on $P$
defined as follows:

\begin{itemize}

\item If $z \in M^0$, the open orbit, then  $I^z(x) = u_0(x) -
\langle x, \log |z| \rangle + \phip (z),$  where $\phip$ is the
canonical \kahler potential of the open orbit and $u_0$ is its
Legendre transform, the  symplectic potential;

\item When $z \in \mu_0^{-1}(F)$ for some face $F$ of $\partial
P$, then $I^z(x)$ restricted to $x \in F$ is given by
 $I^z(x) =  u_F(x) - \langle x', \log |z'| \rangle + \phi_F(z),$
 where $\log |z'|$ are orbit coordinates along $F$,  $\phi_F$ is
 the canonical
 \kahler potential for the subtoric variety defined by $F$ and $u_F$ is its Legendre transform.
  On the
 complement of $\bar{F}$ it is defined to be $+\infty$.

 \item When $z $ is a fixed point then $I^z(v) = 0$ and elsewhere
 $I^z(x) = \infty$.

\end{itemize}
 \end{theo}

The large deviations principle seems to us of independent
interest. The  statement for fixed $z$  follows from the
G\"artner-Ellis theorem (cf. \cite{DZ,dH,E}), once it is
established that the hypotheses of this theorem are satisfied.  In
addition, we prove that the upper and lower bounds are in a
certain sense  uniform in the $z$ parameter. This is a rather
complicated matter in our setting, since the rate functions $I^z$
are highly non-uniform, and we follow the definition of Laplace
large deviations principles of \cite{DE} in defining uniform large
deviations. They are precisely adapted to Varadhan's Lemma and
will imply that $\phi_k \to \phi$ in $C^0$. From this large
deviations result and a more substantial one below (Theorem
\ref{LDT}), we obtain our convergence result:

\begin{theo} \label{GEO} Let $L \to X$ be a very ample toric line bundle over
a toric \kahler manifold. Let $h_0$ be a positively curved metric
on $L$ and $T$ a test configuration. Then $\psi_k(t; z)$ converges
in $C^1$ to the  $C^{1,1}$ geodesic ray $\psi_t(z)$  in $\hcal$
given by (cf.  (\ref{FT}))
\begin{equation} \label{PHIT} \psi_t (z) = \sup_{x \in P} [F_t - I^z]  \end{equation}
Moreover, $\psi_t$
 has a bounded but discontinuous second spatial and $t$ derivatives and
 $\omega_t = \omega_0 + \ddbar \psi_t$ has a zero eigenvalue in certain open sets.
 Explicitly
\begin{itemize}

\item  When $z \in M^o$,  the open orbit,
 $\psi_t(z) = \lcal_{\R^m} (u_0 + t f) - \phip $ where $\lcal_{\R^m}$ is the Legendre
 transform on $\R^m$. There is a simpler formula which uses the
 moment map $\mu_t$ associated to $\omega_t$ which is introduced in
 Proposition \ref{LLNMUZT}. As will be shown in Proposition \ref{PHITFORM}, in
 the region $\mu_t^{-1}(P_j)$, we
 have $\psi_t(e^{\rho/2 + i \theta}) = \phip ( e^{(\rho + t \nu_j)/2+ i\theta}) - t v_j - \phip$, where $z= e^{\rho/2+\i \theta}$.

 \item When $z \in \mu_0^{-1}(F^o)$, then $\psi_t(z) = \lcal_{F^o} (u_F + t
 f) - \phi_F$ where $\lcal_{F^o}$ denote the Legendre transform on the
 quotient of $\R^m$ by the isotropy subgroup of $z$;

 \item When $z = \mu_0^{-1}(v)$ is a vertex, then $\psi_t(z) = - t f(v).$

 \item  A point  $z = e^{\rho/2+ i\theta} \in \mu_0^{-1}(P_j \cap P_k)$ only if
 $\rho \in t CH(\nu_j, \nu_k)$, where $CH$ denotes the convex hull.
  In that case,  $\mu_t(\rho) $ is a constant point $x_0$
 for $\rho \in t CH (\nu_j, \nu_k)$, and
$\psi_t(z) = \langle \rho, x_0 \rangle - u_0(x_0) - t (\langle
\nu_j, x_0 \rangle + v_j) - \phip(z). $ In these open sets,
$\omega_t^m \equiv 0.$ Analogous formulae hold on the faces of
$\partial P$.

 \end{itemize}
 \end{theo}

We obtain the formula for $\psi_t$ by applying Varadhan's Lemma to
the integrals (\ref{PHIKTZINT}). Uniformity of the limit is a
novel feature. An additional  part of the proof of Theorem
\ref{GEO} is to show that (\ref{PHIT}) is in fact a $C^{1,1}$
function on $M$. Even on the open orbit, it requires some  convex
analysis to see that $\lcal(u_0 + t f) \in C^{1,1}(\R^m)$. Roughly
speaking, the Legendre transforms smooths out the corners of $f$
to $C^1$, but no further than $C^{1,1}$. We must then verify that
the extension to $M$ of $\psi_t$ remains $C^{1,1}$. Since $\psi_t$
is $C^1$ it determines a moment map, given over the open orbit by
\begin{equation} \label{MUPHI} \mu_t: M^o \to P, \;\; \mu_t(e^{\rho/2 + i \theta}) =
\nabla_{\rho} \psi_t (e^{\rho/2 + i \theta}) \;\; \mbox{on} \;\; M^o.
\end{equation}

 An interesting feature of the convex analysis is that
$\mu_t$ fails to be a homeomorphism from $M/\T$ to $P$ as in the
smooth case.
 Indeed, the usual inverse map defined by gradient of the symplectic
potential pulls apart the polytope discontinuously into different
regions. However,  in  \S \ref{MMSUBDIF} we explain how the moment
map of the singular metrics $\psi_t$ is rather a homeomorphism
from the underlying real toric variety $M_{\R}$ (cf. \S
\ref{BACKGROUND})  to the graph of the subdifferential of $u + t
f$. More precisely, this statement is correct on the open orbit
and has a natural closure on the boundary.

The proof of $C^0$ convergence to the limit is already an
improvement on the degree of convergence in \cite{PS2}.  To prove
$C^1$ convergence, we consider integrals  of bounded continuous
functions on $P$ against the `time-tilted' probability measures
\begin{equation} \label{TIMETILT} d\mu_k^{z,t}: = \frac{1}{Z_k^{t, z}} \sum_{\alpha \in k d P \cap \Z^m} e^{2  k d
(F_t(\frac{\alpha}{k d}))} \frac{||s_{\alpha}(z)||^2_{h_0^{k d}}}{\QQ_{h_0^{k d}}(\alpha)} \delta_{\frac{\alpha}{k}}.
\end{equation}

It follows from  the `tilted' Varadhan Lemma and from Theorem
\ref{LDINTRO} that  the sequence
 $d\mu_k^{z,t}$ of probability measures on $P$  satisfies a  large deviations principle with convex rate function
$$I^{z,t} = I^z + t f(x)  + \sup_{x \in P} [F_t - I^z]. $$ On
the open orbit, $I^{z, t} (x) = - \langle x, \log |z| \rangle +
u_0(x) + \phip (\rho)$. Thus, the \kahler potential of the
singular metric corresponding to $u + t f$, given in Theorem
\ref{GEO} (see (\ref{PHIT})), is closely related to the rate
functional for $\mu_k^{z, t}$.

 The importance of these tilted measures is that the
first derivatives $d \psi_k(t,z)$ in $t$ and $z$ variables can be
expressed as integrals of  continuous functions on the compact set
$P$  against $d\mu_k^{t, z}$. We  prove the following  parallel to
Proposition \ref{LLNMUZ}:

\begin{prop} \label{LLNMUZT}  For all $(z,t)$, there exists a unique limit point
$\mu_t(z) \in P$
$$\mu_t(z) =  \lim_{k \to \infty} \int_P x d\mu_k^{z,t}, $$
which is the same as (\ref{MUPHI}).  We have  $\mu_k^{z,t} \to
\delta_{\mu_t(z)}$ in the weak sense as $k \to \infty.$ Further,
$d_{z, t} \psi_k(t,z) \to d_{z, t} \psi(t,z)$ uniformly as $k \to
\infty$. More precisely,
$$\left\{ \begin{array}{l} d \psi_k(t, z) \to \bar{x}(t, z) = \lim_{k \to \infty} \int_P x d\mu_k^{z, t},\\ \\
\dot{\psi}_k(t,z)  \to R - f(\bar{x}(t,z)) = \lim_{k \to \infty}
\int_P (R - f(x))d\mu_k^{z,t}.
\end{array} \right.$$

 \end{prop}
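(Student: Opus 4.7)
The plan is to derive the proposition from a tilted large deviations principle for $\mu_k^{z,t}$ and a direct differentiation of the explicit formula in Proposition \ref{INTROPROP}.

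First, applying the tilted Varadhan Lemma to the uniform LDP of Theorem \ref{LDINTRO} shows that the tilted measures $\mu_k^{z,t}$ in (\ref{TIMETILT}) satisfy a uniform Laplace LDP with convex rate function
$$I^{z,t}(x) = I^z(x) - c F_t(x) + \psi_t(z),$$
where $c$ is the positive constant appearing in the exponent of (\ref{toricFk}) and $\psi_t(z) = \sup_{x\in P}[cF_t - I^z]$ normalizes the infimum to $0$. Since $I^z$ is strictly convex on its effective domain (either $P$ on the open orbit, where $I^z = u_0(x) - \langle x,\log|z|\rangle + \phip$, or a closed face $\bar F$ with strictly convex $u_F$), so is $I^{z,t}$. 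It therefore has a unique minimizer $\bar x(t,z)$, which by Legendre duality equals the $\rho$-gradient of $\lcal_{\R^m}(u_0 + tf)$ on the open orbit and matches $\mu_t$ in (\ref{MUPHI}) by the first bullet of Theorem \ref{GEO}. The standard Laplace-LDP concentration argument (Dupuis--Ellis, Prop.~1.12) then gives weak convergence $\mu_k^{z,t}\rightharpoonup \delta_{\bar x(t,z)}$, and compactness of $P$ yields
$$\int_P x\,d\mu_k^{z,t}\to \bar x(t,z),\qquad \int_P(R-f(x))\,d\mu_k^{z,t}\to R-f(\bar x(t,z)).$$

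Second, I would differentiate (\ref{toricphik}) directly. In logarithmic toric coordinates $\rho_i = \log|z_i|^2$, the pointwise norms satisfy $\log\|s_\alpha(z)\|^2_{h_0^{kd}} = \langle\alpha,\rho\rangle - kd\,\phi_0(\rho)$, so an elementary one-line calculation gives
$$\partial_{\rho_i}\psi_k(t,z) + \partial_{\rho_i}\phi_0(\rho) = \int_P x_i\,d\mu_k^{z,t},\qquad \partial_t\psi_k(t,z) = c\int_P (R-f(x))\,d\mu_k^{z,t} + B_k(t),$$
where $B_k(t)$ is the $t$-derivative of the explicit Bernstein-type prefactor in (\ref{toricFk}) and converges by the averaged Bernstein/Tian--Zelditch asymptotics underlying Proposition \ref{LLNMUZ}. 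Combining this with the previous step gives pointwise convergence of $d_{z,t}\psi_k$, identified with the moment map $\mu_t=\bar x(t,z)$ of the full potential $\phi_0 + \psi_t$ and with $R - f(\bar x(t,z))$ respectively. For the \emph{uniform} convergence I would apply the uniform Laplace LDP of Theorem \ref{LDINTRO} to the $C^0$-precompact families of test functions $c F_t + \epsilon x_i$ and $c F_t + \epsilon(R-f)$ on $P$ and differentiate the resulting Varadhan-limit identity in $\epsilon$ at $\epsilon = 0$; this transfers the uniformity of Theorem \ref{LDINTRO} to the first moments of $\mu_k^{z,t}$, and therefore to $d_{z,t}\psi_k$, on compact subsets of the $(t,z)$-parameter space.

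The main obstacle I expect is the boundary case $z\in\mu_0^{-1}(\partial P)$: there $I^{z,t}$ equals $+\infty$ off a proper closed face $\bar F$, the measures $\mu_k^{z,t}$ concentrate on a lower-dimensional stratum of $P$, and $\bar x(t,z)$ is only characterized by a constrained Legendre transform on $\bar F$. Verifying continuous dependence of $\bar x(t,z)$ across strata and that uniformity of the LDP survives the collapse of the effective domain requires careful face-by-face use of the stratified structure in Theorem \ref{LDINTRO}, together with continuity of the Legendre transform of $u_F + tf\big|_{\bar F}$ as the moment image of $z$ approaches the boundary.
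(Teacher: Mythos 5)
Your high-level strategy matches the paper's: express $d_{z,t}\psi_k$ as first moments against the tilted measures $d\mu_k^{z,t}$, establish a tilted LDP with strictly convex rate function $I^{z,t}$ whose unique minimizer is $\mu_t(z)$, and conclude weak concentration. But the execution differs in two places where the paper's route is substantively cleaner. First, for the concentration statement (Lemma~\ref{PROPLEM}) the paper does not simply invoke strict convexity plus a generic Dupuis--Ellis concentration result; it computes the scaled log-MGF $\Lambda^{z,t}(\rho)$ of $d\mu_k^{z,t}$ explicitly, identifies it (up to constants) with $\psi_t(\rho + \log|z|) + \phip(\log|z|)$ via (\ref{LAMBDAZT}), verifies its $C^1$ regularity from Proposition~\ref{U+TFC1}, and then applies Ellis Theorem~II.6.3. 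That computation is what actually produces the identification $\bar{x}(t,z) = \nabla_\rho(\psi_t + \phip) = \mu_t(z)$ of (\ref{MUPHI}). Your proposal defers that identification to ``Legendre duality and the first bullet of Theorem~\ref{GEO},'' which is circular: that bullet of Theorem~\ref{GEO} is the conclusion being proved, not an input.

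Second, and this is a genuine gap, your argument for uniformity of the first-moment convergence --- differentiating the uniform Varadhan identity in an auxiliary tilt $\epsilon$ at $\epsilon = 0$ --- is not self-contained. Uniform-in-$k$ convergence of the convex functions $\epsilon \mapsto L_k(\epsilon;z,t)$ to a differentiable limit $L(\epsilon;z,t)$ gives convergence of derivatives at each fixed $(z,t)$, but to make $\partial_\epsilon L_k|_{\epsilon=0} \to \partial_\epsilon L|_{\epsilon=0}$ \emph{uniform} in $(z,t)$ you must control the second difference $\delta^{-1}\bigl(L(\delta) - 2L(0) + L(-\delta)\bigr)$ uniformly, i.e.\ you need a uniform upper bound on the Hessian of $L$ in $\epsilon$, equivalently uniform strong convexity of $I^z - cF_t$ across all $z$ including near $\partial P$. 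You never establish this, and it is not a triviality --- $\nabla^2 u_0$ blows up at $\partial P$ and one has to pass to the face potentials $u_F$. The paper avoids this entirely: it proves the uniform tilted LDP upper bound (Proposition~\ref{LDT} via Lemma~\ref{IZLOGPCALt}) and then, in Lemma~\ref{PSI}, splits $\int_P \psi\,d\mu_k^{z,t}$ into an $\epsilon$-ball around $\mu_t(z)$ and its complement, bounding the tail by $\exp\bigl(-k\inf_{|x-\mu_t(z)|>\epsilon} I^{z,t}(x) + o(1)\bigr)$, which only requires strict positivity of the infimum away from the minimizer, not quantitative strong convexity. If you want your route to go through, you need to insert a uniform strong convexity lemma; otherwise the tail-estimate approach is the cheaper one.
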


The organization of this article is as follows: After reviewing
the basic geometric and analytic objects in \S \ref{BACKGROUND},
we introduce the measures $d\mu_k^z$ in \S \ref{LDMUKZ} and prove
Proposition \ref{LLNMUZ} and state Theorem \ref{LDINTRO}.  In \S
\ref{LOGMGFl} we begin the proof of Theorems \ref{LDINTRO}  by
studying the scaling limit of the logarithmic moment generating
function of $d\mu_k^z$. In \S \ref{PROOFLD} we prove Theorem
\ref{LDINTRO}. In \S \ref{TTC}, we introduce toric test
configurations and prove Proposition \ref{INTROPROP}. In \S
\ref{ULP}, we prove the $C^0$ convergence part of Theorem
\ref{GEO} using  Varadhan's Lemma (Theorem \ref{C0}). This proves
the explicit formula (\ref{PHIT}). We then study the regularity of
$\psi_t$ in \S \ref{MMSUBDIF}, define the moment map $\mu_t$ and
study its regularity and mapping properties. The analysis shows
that $\psi_t \in C^1(M)$ for each $t$. In \S \ref{PFC1} we show
that $\psi_k \to \psi$ in $C^1([0, L] \times M)$ for any $L > 0$.
We cannot of course obtain convergence in $C^2$ since $\psi_t
\notin C^2$.


In conclusion, we  would like to thank  D.H. Phong and J. Sturm
for many discussions of the subject of this article as it evolved.
We also thank V. Alexeev for tutorials on toric test
configurations and in particular for corroborating Proposition
\ref{PROPWEIGHTS}. We thank  O. Zeitouni for tutorials  in the theory of
large deviations. The initial results of this paper were presented
at the Complex Analysis and Geometry – XVIII conference in  Levico
in May,  2007.

\section{\label{BACKGROUND} Background on toric varieties}

We employ the same notation and terminology as in
\cite{D1,SoZ1,SoZ2}. We briefly recall the main definitions for
the reader's convenience.

 We recall that a toric \kahler manifold is a \kahler manifold
$(M, J, \omega_0)$ on which the complex torus $(\C^*)^m$ acts
holomorphically with an open orbit $M^o$.  We
 choose a basepoint $m_0$ on the orbit open and
identify  $M^o \equiv (\C^{*})^{m}$. The underlying real torus is
denoted $\T$ so that $(\C^*)^m = \T \times \R_+^m$, which we write
in coordinates as $z = e^{\rho/2 + i \theta}$ in a multi-index
notation.

We fix the standard basis $\{\frac{\partial}{\partial \theta_j}\}$
of $Lie(\T)$ (the Lie algebra) and use the same notation for the
induced vector fields on $M$. We also denote by
$\{\frac{\partial}{\partial \rho_j}\}$ the standard basis of
$Lie(\R_+^m)$. We then have $\frac{\partial}{\partial \rho_j} =
J_0 \frac{\partial}{\partial \theta_j}$ where $J_0$ is the
standard complex structure on $\C^m$. We use the same notation for
the induced vector fields on $M$.

We assume that $M$ is projective and that $P$ is a Delzant
polytope; we view $M$ as defined by a monomial embedding. The
polytope $P$ is
 defined by a set of linear inequalities
$$l_r(x): =\langle x, v_r\rangle-\alpha_r \geq 0, ~~~r=1, ..., d, $$
where $v_r$ is a primitive element of the lattice and
inward-pointing normal to the $r$-th $(n-1)$-dimensional face of
$P$. We denote by $P^o$ the interior of $P$ and by $\partial P$
its boundary; $P = P^o \cup
\partial P$. We normalize $P$ so that $0 \in P$ and $P \subset
\R_{\geq}^m$. Here, and henceforth, we put $\R_{\geq} = \R_+ \cup
\{0\}$.  For background, see \cite{G,A,D1,Fu}.

Underlying the complex toric variety $M$ is a real toric variety
$M_{\R}$, namely the closure of $\R_+^m$ under the monomial
embedding. It is a manifold with corners  homeomorphic to $P$ (cf.
\cite{Fu}, Ch. 4). Every metric moment map we consider below
defines such a homeomorphism.

\subsection{Monomial basis of $H^0(M, L^k)$, norms and \szego kernels}

Let $\# P $ denote the number of lattice points $\alpha \in \N^m
\cap P$. We denote by $L \to M$ the invariant line bundle obtained
by pulling back $\ocal(1) \to \CP^{\# P - 1}$ under the monomial
embedding defining $M$.
 A natural basis of the
space of holomorphic sections $H^0(M, L^k)$ associated to the
$k$th power of $L \to M$ is defined by the  monomials $z^{\alpha}$
where $\alpha$ is a lattice point in the $k$th dilate of the
polytope, $\alpha \in k P \cap \N^m.$ That is, there exists an
invariant frame $e$ over the open orbit so that $s_{\alpha}(z) =
z^{\alpha} e$. We denote the dimension of $H^0(M, L^k)$ by $N_k$.
We equip $L$ with a toric Hermitian metric $h = h_0$ whose
curvature $(1,1)$ form $\omega_0 = i \ddbar \log ||e||_{h_0}^2$
lies in $\hcal.$ We often express the norm in terms of a local
\kahler potential, $||e||_{h_0}^2 = e^{- \psi}$, so that
$|s_{\alpha}(z)|_{h_0^k}^2 = |z^{\alpha}|^2 e^{- k \psi (z)}$ for
$s_{\alpha} \in H^0(M, L^k)$.

Any  hermitian metric $h$ on $L$ induces inner products
$Hilb_k(h)$ on $H^0(M, L^k)$, defined by
\begin{equation} \label{HILB} \langle s_1, s_2 \rangle_{h^k} =
\int_M (s_1(z), s_2(z))_{h^k} \frac{\omega_h^m}{m!}.
\end{equation} The monomials are orthogonal with respect to any
such toric inner product and have the norm-squares
\begin{equation} \label{QFORM} Q_{h^k}(\alpha) = \int_{\C^m} |z^{\alpha}|^2 e^{-
k \psi(z)} dV_{\phi}(z), \end{equation} where $dV_{\phi} = (i
\ddbar \phi)^m/ m!$.

 The \szego (or Bergman) kernels of a positive Hermitian line
bundle $(L, h) \to (M, \omega)$  are the kernels of the orthogonal
projections $\Pi_{h^k}: L^2(M, L^k) \to H^0(M, L^k)$ onto the
spaces of holomorphic sections with respect to the inner product
$Hilb_k(h)$,
\begin{equation} \Pi_{h^k} s(z) = \int_M \Pi_{h^k}(z,w) \cdot s(w)
\frac{\omega_h^m}{m!}, \end{equation} where the $\cdot$ denotes
the $h$-hermitian inner product at $w$.
 In terms of a local frame  $e$  for $L \to M$ over an
open set $U \subset M$,  we may write sections as $s = f e$. If
$\{s^k_j=f_j e_L^{\otimes k}:j=1,\dots,d_k\}$ is  an orthonormal
basis for $H^0(M,L^k)$, then  the \szego kernel can be written in
the form
\begin{equation}\label{szego}  \Pi_{h^k}(z, w): = F_{h^k}
(z, w)\,e^{\otimes k}(z) \otimes\overline {e^{\otimes
k}(w)}\,,\end{equation} where
\begin{equation}\label{FN}F_{h^k}(z, w)=
\sum_{j=1}^{N_k}f_j(z) \overline{f_j(w)}\;, ~~~N_k = H^0(M, L^k).\end{equation} In the
case of a toric variety with $0 \in \bar{P}$, there exists a frame
$e$ such that $s_{\alpha}(z) = z^{\alpha} e$ on the open orbit,
and then
\begin{equation}\label{FNa}F_{h^k}(z, w)=
\sum_{j=1}^{N_k} \frac{z^{\alpha}
\bar{w}^{\alpha}}{Q_{h^k}(\alpha)} \;.\end{equation} Along the
diagonal, $\left(F_{h^k}(z,z)\right)^{-1}$ is a Hermitian metric.
The product $F_{h^k}(z,z) ||e||_{h^k}^2$ is then the ratio of two
Hermitian metrics and it balances out to have a power law
expansion,
\begin{equation} \label{TYZ}  \Pi_{h^k}(z,z) = \sum_{i=0}^{N_k}
||s^k_i(z)||_{h_k}^2 = a_0 k^m + a_1(z) k^{m-1} + a_2(z) k^{m-2} +
\dots \end{equation} where $a_0$ is constant;  see \cite{T,Z}. We
note that by a slight abuse of notation,  $\Pi_{h^k}(z,z)$ denotes
the metric contraction of (\ref{szego}). It is sometimes written
$B_{h^k}(z)$ and referred as the density of states. If we sift out
the $\alpha$th term of $\Pi_{h^k}$  by means of Fourier analysis
on $\T$, we obtain
\begin{equation} \label{PHK} \pcal_{h^k}(\alpha, z): =
\frac{|z^{\alpha}|^2 e^{- k \psi(z)}}{Q_{h^k}(\alpha)},
\end{equation}
which play an important role in this article (as in \cite{SoZ2}).

\subsection{\label{BACKGROUNDOO}\kahler potential on the open orbit and symplectic
potential}

On  any simply connected open set, a \kahler metric may be locally
expressed as $\omega = i \ddbar \phi$ where $\phi$ is a locally
defined function which is unique up to the addition $\phi \to \phi
+ f(z) + \bar{f}(\bar{z})$ of the real part of  a holomorphic or
anti-holomorphic function $f$. Of course, the potential is not
globally defined. We now introduce special local \kahler
potentials adapted to the open orbit, respectively the divisor at
infinity on a toric variety.

Without loss of generality, we assume that $L$ is very ample. Then on the open orbit $M^o \subset M$, there is a canonical choice of
the open-orbit \kahler potential once one fixes the image $P$ of
the moment map:
\begin{equation} \label{CANKP} \phip(z) : = \log  \sum_{\alpha \in
P}  |z^{\alpha}|^2 = \log  \sum_{\alpha \in P} e^{\langle \alpha,
\rho \rangle}.
\end{equation}
This is the potential appearing in Theorem \ref{LDINTRO} for the
open orbit. For instance, the Fubini-Study \kahler potential is
$\phi(z) =\log (1 + |z|^2) = \log (1 + e^{\rho})$. We observe
that, since $0 \in P$, (\ref{CANKP}) defines a  smooth function to
the full affine chart $z \in \C^m$ in the closure of the open
orbit chart when we use the first expression with  $z =
e^{\rho/2}$. As will be discussed in \S \ref{BACKGROUNDDP}, this
affine chart corresponds to the choice of the vertex $0$ and the
associated fixed point $\mu_0^{-1}(0)$. There is a corresponding
affine chart and \kahler potential on the chart for any vertex.

 Since it is
invariant under the real torus $\T$-action,  $\phip$ only depends
on the $\rho$-variables and we have
$$\omega_0 =  i  \sum_{j, k} \frac{\partial^2 \phip}{\partial
\rho_k \partial \rho_j} \frac{dz_j}{z_j} \wedge
\frac{d\bar{z}_k}{\bar{z}_k}.$$ Since $\omega_0$ is a positive
form, $\phip $ is a strictly convex function of $\rho \in \R^n$.
We may view
 $\phip (\rho)$  as a function on the Lie algebra
$Lie(\R_+^m)$ of $\R_+^m \subset (\C^*)^m$  or equivalently as a
function on the open orbit of the real toric variety $M_{\R}$.

The action of the real torus $\T$  on $(M, \omega_0)$ is
Hamiltonian with  moment map $\mu_0: M \to P$ with respect to
$\omega_0$.  We recall that the moment map $\mu_0: M \to (Lie
\T)^*$ is defined by $\langle \mu(z), X \rangle = H_X(z)$ where
$H_X$ is the Hamiltonian of $X^*$;  $X^*$ is the induced
Hamiltonian vector field  on $M$ induced by  natural map $X \in
Lie(\T) $.  Over the open orbit, the moment map may be expressed
as
\begin{equation} \label{MMDEF} \mu_{0} (z_1, \dots, z_m) =
(\frac{\partial \phip}{\partial \rho_1}, \cdots, \frac{\partial
\phip}{\partial \rho_m}) ;\;\; (z = e^{\rho/2 + i \theta}).
\end{equation}
Although the right side is an expression in terms of the locally
defined \kahler potential $\phip$, which is singular `at
infinity', the components $\frac{\partial \phip}{\partial \rho_j}$
extend to all of $M$ as smooth functions. This follows from the
fact that $\mu_0$ is globally smooth.  For instance, for the
Fubini-Study metric on $\CP^m$, the moment map is $\mu_0(z)  =
\frac{(|z_1|^2, |z_2|^2, \dots, |z_m|^2)}{1 + ||z||^2}, $ where
$||z||^2 = |z_1|^2 + \cdots + |z_m|^2$ and the \kahler potential
on the open orbit is $\phi_{P^o}(\rho) = \log (1 + e^{\rho_1 +
\cdots + \rho_m}),$ with $\frac{\partial \phip}{\partial \rho_j} =
\frac{e^{\rho_j}}{(1 + e^{\rho_1 + \cdots + \rho_m})}. $

The moment map defines a homeomorphism $\mu_0 : M_{\R} \to P$.
Later we will need to define the inverse of the map (\ref{MMDEF})
on $M_{\R}$  so we take some care at this point to make explicit
the identifications implicit in the formula.
 First,  we decompose  $Lie (C^*)^m =
Lie (\T) \oplus Lie (\R^m_+)$. Viewing $\phip$ as a function on
$Lie(\R_+^m)$,  $d \phip (\rho) \in T^*_{\rho} Lie(\R_+^m) \simeq
Lie(\R_+^m)^*$. Under $J_0: Lie(\R_+^m)^* \simeq Lie(\T)^*$ so we
may regard $d\phip: M_{\R} \to Lie(\T)^*$ as the moment map.

We now consider the {\it symplectic potential} $u_0$ associated to
$\phip$, defined as the  Legendre transform  of $\phip$ on $\R^m$:
\begin{equation} \label{SYMPOTDEF} u_{0}(x) = \phip^*(x) =  \lcal \phip(x): = \sup_{\rho \in \R^m}
(\langle x, \rho \rangle - \phip(e^{\rho/2 + i \theta})).
\end{equation}  It is a function on $P$, or in  invariant terms it is a function on   $
Lie(\T)^* \simeq Lie(\R_+^m)^*$. In general, the Legendre
transform of a function on a vector space $V$ is a function on the
dual space $V^*$.  The symplectic potential has canonical
logarithmic singularities on $\partial P$. According to
 \cite{A} (Proposition 2.8) or \cite{D1} ( Proposition 3.1.7),
 \begin{equation} \label{CANSYMPOT}
u_0(x) = \sum_k \ell_k(x) \log \ell_k(x) + f_0 \end{equation}
where $f_{0} \in C^{\infty}(\bar{P})$.

The differential $d u_0$ in a sense defines a partial inverse for
the moment map. More precisely, we have
 \begin{equation}\label{GRADULOGMU} (\frac{\partial u_0}{\partial x_1}, \dots, \frac{\partial u_0}{\partial x_m})
  = 2 \log \mu_0^{-1}
 (x) \iff \mu_0^{-1}(x) = \exp \frac{1}{2} (\frac{\partial u_0}{\partial x_1}, \dots, \frac{\partial u_0}{\partial
 x_m}),
 \end{equation}
 where $\exp: Lie(\R_+^m) \to \R_+^m$ is the exponential map.
In coordinates, this follows from the fact that
 $$u_0(x) = \langle x, \rho \rangle - \phip (e^{\rho/2+ i\theta}) \implies
 \nabla u_0(x) = \rho   - \langle x, \nabla_x \rho \rangle -
 \langle \nabla \phip (e^{\rho/2 + i \theta}), \nabla_x \rho \rangle = \rho , $$
when  $\nabla \phip(e^{\rho/2+ i\theta}) = x. $  To
 interpret
 (\ref{GRADULOGMU}) invariantly, we note that $d u_0(x) \in T^*_x (Lie (\T)^*)
 \simeq Lie(\T) \simeq Lie(\R_+^m)$ while
$\mu_0^{-1}
 (x) \in M^o_{\R} \simeq \R_+^m$ so that $2 \log \mu_0^{-1}
 (x) \in Lie (\R_+^m)$.
We observe that  (\ref{GRADULOGMU}) defines an inverse of $\mu_0$
from the open  orbit of the base point under $R_{+}^m$ to its
image $P^o$ and that it extends to a homeomorphism between the
manifolds with corners $\R_{\geq}^m \iff \mu_0(\R_{\geq}^m)$.
Here, $\R_{\geq} = R_+ \cup \{0\}$.

 It will also be important to write the norming constants in terms
 of the symplectic potential:
\begin{equation} \label{SPNORM} \QQ_{h^k}(\alpha) = \int_P e^{ k
(u_{0}(x) + \langle \frac{\alpha}{k} - x, \nabla u_{0}(x) \rangle}
dx. \end{equation} It follows from   \cite{SoZ2} (Proposition 3.1)
and from \cite{STZ}  that for interior $\alpha$, and $\alpha_k$
with $|\alpha - \alpha_k| = O(\frac{1}{k})$,
\begin{equation} \label{QQ} \QQ_{h^k}(\alpha_k) \sim k^{-m/2} e^{ k
u_0
(\alpha)}, \end{equation} and for all  $\alpha$ and $\alpha_k$
with $|\alpha - \alpha_k| = O(\frac{1}{k})$ that
\begin{equation} \label{QQa} \frac{1}{k} \log \QQ_{h^k}(\alpha_k) =
u_0
(\alpha) + O(\frac{\log k}{k}). \end{equation}


\subsection{\label{BACKGROUNDDP}The divisor at infinity $\dcal$ and the boundary $\partial P$ of $P$}

The above  definitions concern the behavior of the \kahler
potential on the open orbit $(\C^*)^m$ and the dual behavior of
the symplectic potential. As noted above, the \kahler potential
extends smoothly to the full affine chart $\C^m$. This is but one
affine chart needed to cover $M$ in the distinguished atlas
$\{U_{v}\}$ parameterized by vertices $v$ of $P$.  We briefly
explain how to modify the above constructions so that they apply
to the other charts, referring to \cite{SoZ1,STZ} for further
discussion.

For each vertex $v \in P$, we define  the chart $U_{v}$
 by
$ U_{v}:=\{z \in M\,;\,s_v(z) \neq 0\}, $ where $s_v$ is the
monomial section corresponding to $v$.  Since  $P$ is Delzant,
there exist  $\alpha^{1},\ldots,\alpha^{m} \subset \N^m \cap P$
such that each $\alpha^{j}$ lies on  an edge incident to  $v$, and
the vectors $v^{j}:=\alpha^{j}-v$ form a basis of $\Z^{m}$. We
define
\begin{equation}
\label{CChange} \eta_v:(\C^{*})^{m} \to (\C^{*})^{m}, \quad
\eta_v(z):=(z^{v^{1}},\ldots,z^{v^{m}}).
\end{equation}
The map $\eta$ is a $\T$-equivariant biholomorphism of $(\C^*)^m$
with inverse
\begin{equation}  z:(\C^{*})^{m} \to (\C^{*})^{m},\quad z(\eta)=(\eta^{\Gamma
e^{1}},\ldots,\eta^{\Gamma e^{m}}),
\end{equation}
where $e^{j}$ is the standard basis for $\C^{m}$, and $\Gamma$ is
 defined by
\begin{equation}\label{GAMMADEF}
\Gamma v^{j}=e^{j},\quad v^{j}=\alpha^{j}-v.
\end{equation}
 The corner of $P$ at $v$ is
transformed to the standard corner of the orthant $\R_+^m$ by
 the affine linear transformation
\begin{equation}\label{GAMMATWDEF}
\tilde{\Gamma}:\R^{m} \ni u \to \Gamma u -\Gamma v \in \R^{m},
\end{equation}
which preserves $\Z^{m}$, carries $P$ to a polytope $Q_{v} \subset
\{x \in \R^{m}\,;\,x_{j} \geq 0\}$ and carries the facets $F_j$
incident at $v$ to the coordinate hyperplanes $=\{x \in
Q_{v_0}\,;\,x_{j}=0\}$. The  map $\eta$ extends a homeomorphism: $
\eta:U_{v} \to \C^{m},$ and $$\eta(\mu_{P}^{-1}(\bar{F}_{j})) =
\{\eta \in \C^{m}\,;\,\eta_{j}=0\}.$$

For each $v$ we then define the \kahler potential $\phi_{U_v} $ on
$U_v \simeq \C^m$ by
\begin{equation} \label{PHIV} \phi_{U_v}(\eta) = \log \sum_{\alpha
\in P} |\eta^{\tilde{\Gamma}(\alpha)}|^2. \end{equation} The
Legendre transform of $\phi_{U_v}$ as a function on $\R^m$ defines
a dual symplectic potential $u_{U_v}$ on $P$. Generalizing
(\ref{GRADULOGMU}), the inverse of the moment map may be expressed
near the corner at $v$ by
\begin{equation}\label{GRADULOGMUv}  \mu_0^{-1}(x) =
 \exp \frac{1}{2} (\frac{\partial u_{U_v}}{\partial x_1}, \dots, \frac{\partial u_{U_v}}{\partial
 x_m}),
 \end{equation}
 where the right side is identified with a point in $U_v \cap
 M_{\R}$. Thus (\ref{GRADULOGMUv}) defines a homeomorphism from
 the corner at $v$ to its inverse image under $\mu_0$.

 To illustrate the notation in the simplest example of $\CP^1$
 with its Fubini-Study metric and with $v = 1$ we note that
 $\eta_v(z) = z^{-1}, \tilde{\Gamma}(u) = 1 - u$,
 $\phi_{U_1}(\eta) = \log (1 + |\eta|^2),$
and  $u_{U_1}(y) = y \log y + (1 - y) \log (1 - y)$. On  the
overlap $U_0 \cap U_1$,   we have $d u_{U_0}(x) =
 - du_{U_1}(y). $ Indeed,    $y = \tilde{\Gamma}(x)$ and so
 $d u (y) = \log \frac{y}{1 - y} = \log \frac{1 - x}{x} = -
du(x)$. Hence,  $e^{\frac{1}{2} d u(y)} = \frac{y}{1 - y} $ is the
inverse of $e^{\frac{1}{2} d u(x)}$ and $\mu_0^{-1}$ is locally
expressed as a map  from a neighborhood of $v = 1$  up to $y = 0$.

We also need to discuss moment maps and \kahler potentials for the
toric sub-varieties corresponding to boundary faces.  As in
\cite{Fu,SoZ1,STZ}, a face of $ P$ is the intersection of $ P$
with a supporting affine hyperplane;
 a top $m-1$-dimensional  face is  a facet; while at the
other extreme, the lowest dimensional faces are the vertices. We
denote the relative interior of a face by $F^o$.  Each  face
defines a sub-toric variety $M_F = \mu^{-1}(F) \subset \dcal$.
This subtoric variety also has an open orbit and a moment map. In
particular, over the open orbit $\mu^{-1}(F^o)$, there is  a
canonical \kahler potential for $\omega_0 |_{M_F}$:
\begin{equation} \label{CANKPF} \phi_F(z) = \log  \sum_{\alpha \in
\overline{F}}  |z^{\alpha}|^2 = \log  \sum_{\alpha \in
\overline{F}} e^{\langle \alpha, \rho'' \rangle},
\end{equation}
where now $\rho'' \in \R^{m - k}$ if $\dim \T_z = k$. Further the
Legendre transform of $\phi_F$ on $\R^{m-k}$ defines a symplectic
potential $u_F(x'')$ along $F$. Note that $u_0 = 0$ on $\partial
P$, so $u_F$ is not the restriction of $u_0$ to $F$. These
$\phi_F, u_F$ appear in Theorem \ref{LDINTRO} in the formula for
the rate function when $z \in F$. In the extreme case of a  vertex
$v$ corresponding to a fixed point of the $(\C^*)^m$,  $\phi_v =
0$.

\subsection{Summary of  \kahler potentials}

We summarize  the different notions of \kahler potential we have
introduced:

\begin{itemize}

\item $\psi_t$ is the relative \kahler potential with respect to
$h_0$  for the geodesic ray $h_t = e^{- \psi_t} h_0$. It is
globally defined on $M$ and $\psi_0 = 0$;

\item The Bergman geodesic ray potentials $\psi_k(t, z)$ (see
Definition \ref{PSGR}) are also   relative \kahler potentials,
with respect to $h_k$ arising from $Hilb_k(h_0)$. They are  also
globally defined on $M$ and are $o(1)$ as $k \to \infty$ at $t =
0$;

\item $\phip(z)$ is the open orbit \kahler potential corresponding
to $h_0$, i.e. it is the potential for $\omega_0$ on the open
orbit. Similarly, $\phi_F$ is the potential valid near
$\mu_0^{-1}(F)$.

\end{itemize}

\section{\label{TTC} Toric test configurations}

The purpose of this section is to give the proof of  Proposition
\ref{INTROPROP}. We include the basic definitions on toric test configurations
for the sake of completeness. Proposition
\ref{INTROPROP} is then simple to prove and is known to experts;  the
statement  can also be found in \cite{ZZ}.

 We first recall that a test configuration as
defined by Donaldson \cite{D1} consists of the following:

\begin{itemize}

\item A scheme $\chi$ with a $\C^*$ action $\rho$;

\item A $\C^*$ equivariant line bundle $\lcal \to \chi$ which is
ample on all fibers;

\item A flat $\C^*$ equivariant map $\pi: \chi \to \C$ where
$\C^*$ acts on $\C$ by multiplication.

\item The fiber $X_1$ is isomorphic to $X$ and $(X, L^r)$ is
isomorphic to $(X_1, L_1)$ where for $w \in \C, X_w = \pi^{-1}(w)$
and $L_w = \lcal|_{X_w}$.

\end{itemize}

For this article, only the {\it weights} $\eta_{\alpha}$ of the
test configuration  play a role, i.e. the weights of the $\C^*$
action on $H^0(X_0, L^k_0)$ where $X_0$ is the central fiber (i.e.
the eigenvalues of $B_k$ in the notation of \cite{PS2}). We define
the normalized weights (i.e. the eigenvalues of the traceless part
$A_k$ of $B_k$ in the notation of \cite{PS2})  by
\begin{equation} \label{NORMWEIGHTS} \lambda_{\alpha} =
\eta_{\eta} - \frac{1}{N_k} \sum_{j = 1}^{N_k} \eta_j
\end{equation}  The geodesic ray associated to the test
configuration is defined in terms of the normalized weights as
follows:

\begin{defin} \label{PSGR} The Phong-Sturm test configuration geodesic ray is
the weak limit of the Bergman geodesic rays  $h(t; k): (- \infty,
0) \to \hcal_k$ given by
$$h(t; k) = h_{\hat{\underline{s}}(t, k)} = h_0 e^{- \psi_k(t)},
$$
 with
$$\psi_k(t, z) = \frac{1}{k} \log \left( k^{-n} \sum_{\alpha =
0}^{N_k}  e^{2 t \lambda_{\alpha}}
\frac{|s_{\alpha}(z)|^2_{h_0^k}}{||s_{\alpha}||^2_{h_0^k}} \right)
$$
\end{defin}

\subsection{\label{INTROPROPROOF}Calculation of the weights}

In this section, we outline the calculation of the weights in the
case of a toric test configuration and prove Proposition
\ref{INTROPROP}

As above,  let  $P$ be the Delzant polytope corresponding to $M$,
and let $f: \R^m \to \R$ be the convex, rational piecewise-linear
function,
\begin{equation} f = \max\{\lambda_1, \dots, \lambda_p\},
\end{equation}
where the $\lambda_j$ are affine-linear functions with rational
coefficients.

 Fix an integer $R$ such that $f \geq R$ on $\bar{P}$
and following \cite{D1}, \S 4.2, we define a new polytope
\begin{equation} Q = Q_{f, R}  \subset \R^{m+1}, Q = \{(x, t): x
\in P, \; 0 < t < R - f(x)\}. \end{equation} By taking a multiple
$d Q$ it may be assumed that $Q$ is defined by integral equations.
Then $d Q$ is a  Delzant polytope of dimension $m + 1$  and
corresponds to  a toric variety $W$ of dimension $m + 1$ and a
line bundle $\lcal \to W$. When $t = 0$ we obtain a natural
embedding $\iota: (M, L^d) \to (W, \lcal)$. Intuitively,
the  toric degeneration is the singular toric
variety corresponding to the `top' of the polytope $k d Q$. It has
$p$ components,  one component for each facet of the top or
equivalently for  each of the affine functions $\lambda_j$
defining $f$. More precisely,

\begin{prop}\label{FUTAKI} (cf. \cite{D1} Proposition 4.2.1): There exists a
$\C^*$-equivariant map $p: W \to \CP^1$ so that $p^{-1} (\infty) =
\iota(M)$ such that $p$ restricted to $W \backslash \iota(M)$ is a
test configuration for $(M, L)$ with Futaki invariant
$$F_1 = - \frac{1}{2 Vol(P)} \left( \int_{
\partial P} f d\sigma -
\alpha \int_P f d\mu \right), \;\; \alpha = \frac{Vol(\partial
P)}{Vol(P)}. $$
\end{prop}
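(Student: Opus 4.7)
\medskip
\noindent\textbf{Plan.} My strategy is to realize $W$ as the smooth projective toric variety associated with the $(m+1)$-dimensional lattice polytope $dQ$, to identify the map $p$ with the morphism induced by the last coordinate of $Q$, and then to reduce the computation of $F_1$ to an Euler--Maclaurin asymptotic on $P$. The multiplication of $f$ by $d$ enters exactly so that $dQ$ becomes a genuine lattice polytope and the construction has a chance of yielding a morphism of projective schemes. Throughout, I would follow the toric dictionary between Delzant polytopes and smooth polarized toric manifolds; see \cite{D1,Fu}.

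\medskip
\noindent\emph{First} I would check that $dQ$ is Delzant. Its facets are of three kinds: the bottom $dP\times\{0\}$, the lateral facets $dF_r\times [0,d(R-f)]$ inherited from the facets $F_r$ of $P$, and the top facets sitting over the pieces $dP_j$, on each of which $f=\lambda_j$, with inward normal determined by $(\nu_j,1)$ up to a unimodular change. Delzantness at the bottom and lateral vertices is inherited from $P$, while at the top vertices it follows because the $\nu_j$ are integral and the extra vertical direction completes them to a $\Z$-basis of $\Z^{m+1}$. Hence $W$ is a smooth projective toric manifold with ample line bundle $\lcal$ corresponding to $dQ$. The natural projection $Q\to [0,R]$ lifts to a $\C^*$-equivariant morphism $p:W\to\CP^1$: concretely $p$ is given by the ratio of two distinguished sections of a suitable power of $\lcal$, one vanishing on the toric divisor $D_\infty\subset W$ corresponding to the bottom face $dP\times\{0\}$ and one vanishing on the union $D_0$ of toric divisors corresponding to the top facets. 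The extra $\C^*$-factor of $(\C^*)^{m+1}$ acts on $\CP^1$ in the standard way, so equivariance is automatic. Restricting to the bottom gives $p^{-1}(\infty)=D_\infty$, whose polytope is $dP$, so $(D_\infty,\lcal|_{D_\infty})\cong(M,L^d)$; this is the inclusion $\iota$, and it supplies the identification $(X_1,L_1)\cong (M,L^d)$ on any generic fiber.

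\medskip
\noindent\emph{Next} I would verify the axioms of a test configuration on $W\setminus\iota(M)$. Flatness over $\C=\CP^1\setminus\{\infty\}$ follows from equidimensionality of the fibers of $p$ together with Cohen--Macaulayness of the toric scheme $W$; ampleness of $\lcal$ on every fiber and $\C^*$-equivariance are immediate from the toric construction. The central fiber $p^{-1}(0)=D_0$ decomposes into the toric components $M_j$ with polytopes $dP_j$, meeting along the divisors above the cells $P_j\cap P_k$, which is the desired degeneration. \emph{Finally} I would compute $F_1$. By the toric description of the central fiber, the characters of the $\C^*$-action on $H^0(p^{-1}(0),\lcal^{k}|_{p^{-1}(0)})$ are indexed by $\alpha\in kdP\cap\Z^m$ with weight $kd\,f(\alpha/kd)$, since the $t$-height of the lattice point $\alpha$ above $P_j$ on the top facet of $kdQ$ is $kd(R-\lambda_j(\alpha/kd))$, normalized by the constant $R$. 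Two Euler--Maclaurin expansions,
$$
\tfrac{1}{(kd)^{m+1}}\sum_{\alpha\in kdP\cap\Z^m} kd\,f(\alpha/kd)=\int_P f\,d\mu+\tfrac{1}{2kd}\int_{\partial P}f\,d\sigma+O(k^{-2}),
$$
and the standard Hilbert expansion $N_k=(kd)^m\vol(P)+\tfrac{(kd)^{m-1}}{2}\vol(\partial P)+O(k^{m-2})$, combine via the definition of the Futaki invariant as the $k^{-1}$-coefficient in the normalized trace to yield the stated formula, with $\alpha=\vol(\partial P)/\vol(P)$ appearing as the ratio of the subleading to leading Hilbert coefficients.

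\medskip
\noindent\emph{Main obstacle.} The serious content is the construction and flatness of $p$ together with the identification $p^{-1}(\infty)=\iota(M)$ at the level of polarized schemes; once these are in hand, the Futaki computation is a routine Euler--Maclaurin expansion. The identification of weights with $kd\,f(\alpha/kd)$ requires some care because the central fiber is reducible and one must check that the $\C^*$-weight on a monomial section indexed by $\alpha$ on the component $M_j$ really is $kd\,\lambda_j(\alpha/kd)$, independently of $j$ on overlaps; this is exactly the compatibility enforced by $f=\max\lambda_j$ and the geometry of the top facets of $kdQ$.
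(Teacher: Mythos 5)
The paper does not actually prove this proposition: it is stated with "(cf. \cite{D1} Proposition 4.2.1)" and is taken from Donaldson, with the paragraph afterwards only sketching the construction of the map $p$ by means of the ratios $s_{\alpha,j}/s_{\alpha,j+1}$ and recording the dimension identity $\dim H^0(M_0,\lcal^k)=\dim H^0(M,\lcal^k)$. Your proposal attempts a full independent argument; most of it is right, including the (correct and standard) Euler--Maclaurin expansion that recovers the formula for $F_1$, and the identification of $p$ with the projection of $Q$ onto its last coordinate, which is equivalent to the ratio-of-sections description in the text.

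However, there is a genuine error in the step where you claim $dQ$ is Delzant and conclude that $W$ is a \emph{smooth} projective toric manifold. This is false in general and contradicts the paper's own remark that "in our toric setting, the total space is never smooth." For a concrete counterexample take $P=[0,1]$, $f(x)=|x-1/2|$, $R=1$, $d=2$: the polytope $2Q$ has vertices $(0,0),(2,0),(2,1),(1,2),(0,1)$, and at the top vertex $(1,2)$ the edge directions are $(1,-1)$ and $(-1,-1)$, with determinant $-2\neq\pm 1$. Your claim that "the $\nu_j$ are integral and the extra vertical direction completes them to a $\Z$-basis" breaks down exactly at the top vertices where two or more of the top facets meet. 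The error is inessential to the conclusion --- the toric morphism $p$, $\C^*$-equivariance, flatness (toric varieties are Cohen--Macaulay whether or not smooth), the weight count of Proposition \ref{PROPWEIGHTS}, and the Euler--Maclaurin calculation all go through without smoothness --- but as stated the proof invokes a false premise, and the passage should be rewritten to invoke Cohen--Macaulayness and lattice-point counting for the singular toric variety $W$ rather than smoothness.
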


The map $p$ is defined as follows: For any $j$, the ratios $
\frac{s_{\alpha, j}}{s_{\alpha, j + 1}}$ define $\C^*$ equivariant
meromorphic functions on $W$. In fact, up to scale all of these
meromorphic functions agree. Hence we may define $p$ as the common
value of the ratios. The map is defined outside the common zeros
of $s_{\alpha, j}, s_{\alpha, j + 1}$. The sections for $j > 0$
all vanish on $\iota(M)$, so $p$ maps $\iota(M)$ to $\infty$.

The fibers $p^{-1}(t)$ are toric varieties isomorphic to $M$. The
central fiber is $p^{-1}(0) = M_0$. Then by definition, $M_0$ is
the zero locus of a holomorphic section $\sigma_0$ of
$p^*(\ocal(1))$. By using the exact sequences
$$0 \to H^0(W, \lcal^k(-1)) \to H^0(W, \lcal^k) \to H^0(M_0,
\lcal^k) \to 0$$ and
$$0 \to H^0(W, \lcal^k(-1)) \to H^0(W, \lcal^k) \to H^0(\iota(M),
\lcal^k) \to 0$$ defined  by multiplication by $\sigma_0$, resp.
$\sigma_1$,  one finds that
$$\dim H^0(M_0, \lcal^k) = \dim H^0(M, \lcal^k). $$

The prinicipal  fact we need about toric test
configurations is the following Proposition, which is implicit in \cite{ZZ} (Proposition 3.1):
\begin{prop} \label{PROPWEIGHTS} The  weights of the $\C^*$ action on the spaces
$H^0(M_0, \lcal^k)$ are given by
$$\eta_{\alpha} = k d \left(R - f(\frac{\alpha}{k d} ) \right), \;\; \alpha \in k d P
. $$
\end{prop}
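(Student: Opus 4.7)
My plan is to work entirely inside the toric description of $W$ afforded by the polytope $dQ$, and to read off the weights on $H^0(M_0,\lcal^k)$ by matching lattice points to monomial eigen-sections.

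First I would use that $W$ is the toric variety associated to the Delzant polytope $dQ \subset \R^{m+1}$, so that $H^0(W,\lcal^k)$ has a basis of monomial sections $\{s_{(\alpha,\beta)}\}$ indexed by lattice points $(\alpha,\beta) \in kdQ \cap \Z^{m+1}$. The $\C^*$-action defining the test configuration is the one-parameter subgroup of the ambient $(\C^*)^{m+1}$ acting on the last coordinate $t$, and, with the canonical equivariant lift to $\lcal^k$, each $s_{(\alpha,\beta)}$ is an eigenvector of weight $\beta$. Under the symplectic moment map, $p\colon W \to \CP^1$ is identified with the projection $Q \to [0,R]$ onto the $t$-axis; the normalization $p^{-1}(\infty) = \iota(M)$ from Proposition \ref{FUTAKI} pins down the orientation so that $\iota(M)$ is the bottom face $\{t=0\}$, which is a copy of the original $(M,L^{kd})$, and $M_0 = p^{-1}(0)$ is the upper envelope $\{t = R - f(x)\}$, a reducible toric variety whose components correspond to the pieces $P_j$ on which $f = \lambda_j$.

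The core step is to show that for each $\alpha \in kdP \cap \Z^m$ the top-of-column monomial $s_{(\alpha,\,kd(R - f(\alpha/kd)))}$ represents a nonzero class in $H^0(M_0,\lcal^k)$, and that these sections exhaust a basis. I would argue this by duality with the generic-fiber side: restriction to $\iota(M) = \{t=0\}$ kills every monomial with $\beta > 0$ (each such monomial vanishes to positive order along the bottom face), so the surviving basis on $\iota(M)$ consists of the column bottoms $\{s_{(\alpha,0)}\}_{\alpha \in kdP \cap \Z^m}$, matching the standard monomial basis of $H^0(M,L^{kd})$. Symmetrically, restriction to $M_0$ kills every monomial strictly below the upper envelope, so the surviving basis on $M_0$ consists of the column tops $\{s_{(\alpha,\,kd(R-f(\alpha/kd)))}\}_{\alpha \in kdP \cap \Z^m}$. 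Since $\C^*$ fixes $M_0$ pointwise, each such section remains an eigenvector on $M_0$ of the same weight $\beta = kd(R - f(\alpha/kd))$, which is exactly the claimed formula. The dimension count $\#(kdP \cap \Z^m) = \dim H^0(M,L^{kd}) = \dim H^0(M_0,\lcal^k)$ supplied by the exact sequences quoted in the excerpt confirms that no basis elements are missed.

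The step I expect to be the main obstacle is the symmetry argument in the previous paragraph; although morally clear, it requires care because $M_0$ is typically reducible and the restriction map actually lands in a direct sum indexed by the pieces $P_j$, with compatibility relations along the gluing strata. A cleaner alternative I would fall back on is the one-parameter-subgroup perspective: identify a generic fiber $p^{-1}(\lambda)$ with $M$ via the $\C^*$-flow, $\C^*$-equivariantly extend $s_\alpha \in H^0(M, L^{kd})$ to a section $\tilde s_\alpha$ of $\lcal^k$ on $W$, and expand $\tilde s_\alpha$ in the monomial basis; the weight of its specialization to $M_0$ is then the exponent of the leading nontrivial $\lambda$-power, which the geometry of $Q$ forces to be $kd(R - f(\alpha/kd))$. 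This is the viewpoint adopted in \cite{ZZ}, to which the proposition is attributed.
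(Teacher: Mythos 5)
Your proposal follows the same route as the paper's proof: identify $H^0(W,\lcal^k)$ with lattice points of $kdQ$, identify $H^0(M_0,\lcal^k)$ with the restrictions of the top-of-column monomials $z^\alpha w^{kd(R - f(\alpha/kd))}$, and read the weight off the $w$-exponent; the paper simply asserts the spanning (``thus''), while you supply the vanishing argument and the dimension count. One correction is needed: $\C^*$ does \emph{not} fix $M_0$ pointwise --- if it did, the test configuration would be a product configuration and the weights would be trivial. The fact that the restriction of $s_{(\alpha,\beta)}$ to $M_0$ is an eigenvector of weight $\beta$ follows instead from $\C^*$-equivariance of the restriction map $H^0(W,\lcal^k)\to H^0(M_0,\lcal^k)$, so the argument goes through once that one sentence is replaced.
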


\begin{proof}

The monomial basis of $H^0(W, \lcal^k)$
corresponds to lattice points in the associated lattice polytope
$k d Q$. The base of this polytope is thus $k d P$ and the height
over a point $x$ is $k d (R - f(\frac{x}{kd}))$.
The space $H^0(M_0, \lcal^k|_{M_0})$ is thus spanned
by the monomials $$z^{\alpha} w^{k d (R - f(\frac{\alpha}{k
d}))}$$ where $\alpha \in k d P$. The $\C^*$ action whose weights we are calculating
corresponds to the standard action in the $w$  coordinate and clearly produces the
stated weights.

\end{proof}

The following Corollary immediately implies Proposition
\ref{INTROPROP}.

\begin{cor} \label{LAMBDAs} The eigenvalues (normalized weights)  $\lambda_{\alpha, k}$
are given by
\begin{equation}  \lambda_{\alpha, k} = k d ( R -
f(\frac{\alpha}{kd}))- \frac{1}{d_k} \sum_{\alpha \in k P} k d( R
- f(\frac{\alpha}{kd})).
\end{equation}
\end{cor}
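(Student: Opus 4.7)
The plan is to view the corollary as a one-line bookkeeping consequence of Proposition \ref{PROPWEIGHTS} combined with the normalization formula \eqref{NORMWEIGHTS}. The only genuine content has already been established in Proposition \ref{PROPWEIGHTS}, where the unnormalized weights of the $\C^*$-action on $H^0(M_0, \lcal^k)$ were identified, via the monomial description of the total space $W$ of the toric degeneration, as
$$\eta_\alpha = kd\bigl(R - f(\alpha/(kd))\bigr), \qquad \alpha \in kdP \cap \Z^m.$$
So the task is simply to substitute this into \eqref{NORMWEIGHTS}.

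First I would identify the correct value of $N_k$ in the present setting. Because the test configuration is constructed using the polytope $dQ$ (so that $f$ has integral affine pieces), the relevant line bundle is $L^{kd}$ rather than $L^k$, and the monomial basis $\{s_\alpha\}$ of $H^0(M, L^{kd})$ is indexed by $\alpha \in kdP \cap \Z^m$. The exact sequences used in the discussion preceding Proposition \ref{PROPWEIGHTS} give the equality $\dim H^0(M_0, \lcal^k) = \dim H^0(M, L^{kd}) = d_k$. Thus in \eqref{NORMWEIGHTS} the integer $N_k$ should be read as $d_k$, and the index $j = 1, \dots, N_k$ should be read as $\alpha$ ranging over $kdP \cap \Z^m$ (the sum "$\alpha \in kP$" in the statement is a mild shorthand for this).

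With those identifications, the formula in the corollary is immediate:
$$\lambda_{\alpha, k} \;=\; \eta_\alpha - \frac{1}{d_k}\sum_{\beta \in kdP \cap \Z^m}\eta_\beta \;=\; kd\bigl(R - f(\alpha/(kd))\bigr) - \frac{1}{d_k}\sum_{\beta \in kdP \cap \Z^m} kd\bigl(R - f(\beta/(kd))\bigr).$$

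There is no real obstacle; the entire difficulty lay upstream, in the identification of the monomial basis of the central fiber and in the combinatorial description of the weights as heights above $kdP$ in the polytope $kdQ_{f,R}$, carried out in Proposition \ref{PROPWEIGHTS}. The subsequent remark that this corollary implies Proposition \ref{INTROPROP} is then also straightforward: substituting these $\lambda_{\alpha,k}$ into the expression for $\psi_k(t,z)$ in Definition \ref{PSGR} (with $k$ replaced by $kd$), writing $\|s_\alpha\|_{h_0^{kd}}^2 = \QQ_{h_0^{kd}}(\alpha)$ by \eqref{HILB}--\eqref{QFORM}, and factoring the common exponential $\exp\!\bigl(-2t\,\tfrac{1}{d_k}\sum_\beta kd(R - f(\beta/(kd)))\bigr)$ out of the sum produces precisely $\hat Z_k(t,z) = e^{-2t \cdot (\cdot)}Z_k^{t,z}$ as claimed in \eqref{toricFk}.
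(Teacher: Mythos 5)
Your proposal is correct and matches the paper's (implicit) approach exactly: the corollary is stated with no separate proof precisely because it is the immediate substitution of the unnormalized weights $\eta_\alpha = kd(R - f(\alpha/(kd)))$ from Proposition \ref{PROPWEIGHTS} into the normalization formula \eqref{NORMWEIGHTS}, with $N_k$ identified as $d_k = \dim H^0(M_0, \lcal^k) = \dim H^0(M, L^{kd})$. Your observation that the summation index in the displayed equation is shorthand for $\alpha \in kdP \cap \Z^m$ is a correct reading of the notation, and the closing remark about how the corollary feeds into Proposition \ref{INTROPROP} is consistent with the paper's stated intent.
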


\section{\label{LDMUKZ} The measures $d\mu_k^z$}

In this section we discuss  the measures $d\mu_k^z$
(\ref{MUKZDEF}). Our first purpose  is to give the precise
statement of Theorem \ref{LDINTRO}, and to recall the relevant
definitions from the theory of large deviations \cite{dH, DZ}. We
then prove Proposition \ref{LLNMUZ} using Bergman kernels and
Berstein polynomials. Without loss of generality, we assume that $d=1$ to simplify the calculation.

A function $I: E \to [0, \infty]$ is called a rate function if  it
is proper and lower semicontinuous.  A sequence  $\mu_{k}$
($k=1,2,\ldots$) of  sequence of probability measures on  a space
$E $ is said to satisfy the {\it large deviation principle with
the rate function} $I$ (and with the speed $k$) if the following
conditions are satisfied:
\begin{itemize}
\item[(1)] The level set $I^{-1}[0,c]$ is compact for every $c \in
\R$. \item[(2)] For each closed set $F$ in $E$, $ \limsup_{k\to
\infty}\frac{1}{k}\log \mu_{k}(F) \leq -\inf_{x \in F}I(x). $
\item[(3)] For each open set $U$ in $E$, $ \liminf_{k \to
\infty}\frac{1}{k}\log \mu_{k}(U) \geq -\inf_{x \in U}I(x). $
\end{itemize}
Heuristically, in the sense of logarithmic asymptotics,  the
measure $\mu_k$ is a kind of integral of $e^{- k I(x)}$ over the
set.

In \cite{DE}, Dupuis-Ellis gave an alternative definition in terms
of Laplace type integrals and in particular gave a definition of
uniform large deviations which is very suitable for our problem.
We will state it only in our setting, where the parameter space is
the compact toric variety $M$. Put: \begin{equation}
\label{F(z,h)} F(z, h) = - \inf_{x \in P} (h(x) + I^z(x)).
\end{equation}  Then $d\mu_k^z$ satisfies the {\it Laplace principle on
$P$ with rate function $I^z$ uniformly on $M$} if, for all compact
subsets $K \subset M$ and all $h \in C_b(P)$ we have:

\begin{itemize}
\item[(1)] For all  $c \in \R$, $\bigcup_{z \in M}
(I^z)^{-1}[0,c]$ is compact for every $c \in \R$. \item[(2)] For
each $h \in C_b(P)$,  $ \limsup_{k\to \infty} \sup_{z \in M}
\left(\frac{1}{k}\log \int_P e^{- k h} d\mu_k^z -  F(z, h) \right)
\leq 0. $ \item[(3)] For each $h \in C_b(P)$,    $ \liminf_{k \to
\infty} \inf_{z \in M} \left( \frac{1}{k}\log \int_P e^{- k h(x)}
d\mu_k^z (x) - F(z, h)\right) \geq 0. $
\end{itemize}
The upper and lower bounds of course imply, for  each $h \in
C_b(P)$, $$ \lim_{k \to \infty} \sup_{z \in M} \left(
\frac{1}{k}\log \int_P e^{- k h(x)} d\mu_k^z (x) - F(z, h)\right)
=  0. $$

The probability measures of concern in this article are  the
measures (\ref{MUKZDEF}), which we often write in the form
\begin{equation} \mu_k^z = \frac{1}{\Pi_{h_0^k}(z,z)}\;\; \sum_{\alpha \in k P}
\pcal_{h^k}(\alpha, z)  \; \delta_{\frac{\alpha}{k}},
\end{equation} where $\pcal_{h^k}(\alpha, z) $ is given in
(\ref{PHK}). It is simple to see that $\mu_k^z$ is a probability
measure since the total mass of the numerator equals
$\Pi_{h^k}(z,z)$.










We note that the formula for $\mu_k^z$ simplifies when $z \in
\dcal$:

\begin{prop} \label{FACE} If $\mu(z) \in F$ where $F$ is a face of $P$, then
$$ \mu_k^z = \frac{1}{\Pi_{h_0^k}(z,z)}\;\; \sum_{\alpha \in k
\bar{F}} \pcal_{h^k}(\alpha, z)  \; \delta_{\frac{\alpha}{k}}.
$$
In the extreme case where  $z$ is a fixed point of the $\T$ action
and $\mu_0(z)$ is a vertex,  the measures $\mu_k^z$ always equal
$\delta_{\mu_0(z)}$.
\end{prop}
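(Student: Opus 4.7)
The plan is to reduce the proposition to a single vanishing statement: whenever $\mu_0(z)$ lies in the relative interior of a face $F$ of $P$, the hermitian norm $|s_\alpha(z)|^2_{h_0^k}$ vanishes for every lattice point $\alpha \in kP \cap \Z^m$ with $\alpha \notin k\bar F$. Once this is granted, every term in the defining sum \rfe{MUKZDEF} with $\alpha \notin k\bar F$ contributes zero both to the numerator and to the normalization $\Pi_{h_0^k}(z,z) = \sum_\alpha |s_\alpha(z)|^2_{h_0^k}/Q_{h_0^k}(\alpha)$ appearing in \rfe{TYZ}, so $\mu_k^z$ is simply the restriction of the original formula to $\alpha \in k\bar F \cap \Z^m$ with the same (automatically smaller) normalizing constant. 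The vertex case then follows at once: when $F = \{v\}$, one has $k\bar F \cap \Z^m = \{kv\}$, so only a single term survives and the probability normalization forces $\mu_k^z = \delta_v = \delta_{\mu_0(z)}$.

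To prove the vanishing I would pass to an affine chart $U_v$ at a vertex $v$ of $\bar F$ via the coordinate change $\eta = \eta_v(z)$ of \rfe{CChange}. The section $s_{kv}$, corresponding to the vertex $kv$ of the dilated polytope $kP$, is a nowhere-vanishing local frame for $L^k$ on $U_v$, so $|s_\alpha(z)|^2_{h_0^k}$ and $|s_\alpha/s_{kv}|^2(z)$ have the same zero locus in $U_v$. On the open orbit the quotient equals the Laurent monomial $z^{\alpha - kv}$, which in $\eta$-coordinates becomes the honest polynomial $\eta^{\tilde\Gamma_k(\alpha)}$, where $\tilde\Gamma_k(\alpha) := \Gamma(\alpha - kv)$ lies in $kQ_v \cap \Z^m_{\geq 0}$ by \rfe{GAMMADEF}--\rfe{GAMMATWDEF}. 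Consequently $|s_\alpha(z)|^2_{h_0^k} = 0$ iff some coordinate $\tilde\Gamma_k(\alpha)_j$ is strictly positive while the corresponding $\eta_j(z)$ vanishes.

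It remains to marry this with the moment-map hypothesis. Using the standard Delzant dictionary recalled in Section \ref{BACKGROUNDDP}, each coordinate hyperplane $\{\eta_j = 0\}$ in $U_v$ coincides with $\mu_0^{-1}(F_j) \cap U_v$ for a unique facet $F_j$ of $P$ incident to $v$; writing $F = \bigcap_{j \in S} F_j$ for a unique index subset $S \subset \{1,\dots,m\}$, the condition $\mu_0(z) \in F^o$ is equivalent to $\eta_j(z) = 0 \iff j \in S$. The condition $\tilde\Gamma_k(\alpha)_j = 0$ for every $j \in S$ then translates directly to $\alpha \in k\bar F$, which yields the vanishing. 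The only point requiring care — and which I view as the main (though combinatorial rather than analytic) obstacle — is this dictionary between the subset $S$ of coordinate directions at the chosen vertex $v$ and the face $F$; provided one is consistent about the affine identification $\tilde\Gamma$, the rest of the argument is a one-line monomial computation.
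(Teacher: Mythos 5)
Your proposal is correct and follows essentially the same route as the paper: the entire proposition reduces to the vanishing $s_\alpha(z)=0$ for $\alpha\notin k\bar F$ when $\mu_0(z)\in F$, after which the formula for $\mu_k^z$ is automatic and the vertex case is the special case of a single surviving term. The paper simply cites \cite{STZ} for that vanishing, whereas you reconstruct it from the $U_v$-chart machinery of \S\ref{BACKGROUNDDP} (monomial $\eta^{\tilde\Gamma_k(\alpha)}$, coordinate hyperplanes $\{\eta_j=0\}=\mu_0^{-1}(\bar F_j)\cap U_v$, and the Delzant dictionary at a vertex $v$ of $\bar F$); this is the same fact proved in the same spirit, just spelled out in detail rather than delegated to a reference.
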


\begin{proof} It follows easily from
 \cite{STZ} that  $s_{\alpha}(z) = 0$
for all $\alpha$ such that $\alpha \notin \bar{F}$. In the extreme
case where $\mu(z)$ is a vertex $v$,  the only monomial which does
not vanish at $z$ is the monomial corresponding to the vertex. We
then have
$$ \mu_k^z = \frac{1}{\pcal_{h^k}(v, z) }\;\;  \pcal_{h^k}(v, z)  \;
\delta_{v} = \delta_v.
$$

\end{proof}


\subsection{\label{BP}Bernstein polynomials and Proposition \ref{LLNMUZ}}

In this section, we prove Proposition \ref{LLNMUZ} as an
application of Bernstein polynomials in the sense of \cite{Z2}.
We recall here that the  $k$th Bernstein polynomial approximation
to $f \in C(\bar{P})$ was defined in \cite{Z2} by the formula,
\begin{equation} B_{h^k}(f)(x) := \frac{1}{\Pi_{h^k}(z,z)} \sum_{\alpha \in k P} f(\frac{\alpha}{k}) \pcal_{h^k}(\alpha, z), \;\;
x = \mu(z). \end{equation} The definition extends to
characteristic functions of Borel sets $A \subset \bar{P}$ by
$$\mu_k^z(A) = B_k(\chi_A)(x) = \sum_{\alpha \in k P} \chi_A (\frac{\alpha}{k}) \pcal_{h^k}(\alpha, z),\;\;
x = \mu(z). $$

The proof of Proposition \ref{LLNMUZ} is as follows:
\begin{proof} For any $f \in C(\bar{P})$,
$$\int f d\mu_k^z = \sum_{\alpha \in k P} f(\frac{\alpha}{k}) \pcal_{h^k}(\alpha, z)  $$
and the latter is precisely the Bernstein polynomial $B_k (f)(x)$.
In \cite{Z2} it is proved to tend uniformly to $f(z)$.
\end{proof}

We pause to relate Theorem \ref{LDINTRO} to prior results on
Bernstein polynomials for characteristic functions.  In dimension
one, it is a classical result (due to Herzog-Hill) that at a jump
discontinuity, the Bernstein polynomials tend to the mean value of
the jump.  If $A$ is an open set, they converge uniformly to $1$
on compact subsets of $A$, and converge uniformly to zero at an
exponential rate on the compact subsets of the interior of $A^c$
as $k \to \infty$. The large deviations result determines the
exponential decay rate. Intuitively, $I^z(x)$ defines a kind of
distance from $x$ to $z$ using the $(\C^*)^m$ action,  and the
limit $\inf_{x \in A} I^z$ defines  a kind of Agmon   distance
from $z$ to $A$. If $A \subset P^o$ and $z \in
\partial P$ then $B_k(\chi_A)(x) = 0$, and the distance is
infinite. In the case where $P = \Sigma_m$, the unit simplex in
dimension $m$, and with  $h$  the Fubini-Study metric on $\CP^m$
and $A \subset P$ is a convex sublattice polytope, such Bernstein
polynomials were studied under the name of conditional \szego
kernels in \cite{SZ}, since
$$B_k(\chi_A)(x) = \Pi_{h^k| k A}(z,z) =  \sum_{\alpha \in k A}  \pcal_{h^k}(\alpha, z),\;\;
x = \mu(z) $$
is  the diagonal of the  kernel of the orthogonal projection
onto the subspace spanned by monomials $z^{\alpha} $ with $\alpha \in k A$.
 The exponential decay rate was determined
there when $z \in M^o$. In subsequent (as yet unpublished) work,
Shiffman-Zelditch have extended the results of \cite{SZ} to
non-convex subsets as well.

\subsection{Outline of the proof of Theorem \ref{LDINTRO}}

The proof of Theorem \ref{LDINTRO} is based  on the proof of the
G\"artner-Ellis theorem \cite{DZ,dH} and on the Laplace large
deviations principle of \cite{DE}.  The key idea of the
G\"artner-Ellis theorem   is that the rate function should be the
Legendre transform of the scaling limit of the logarithmic moment
generating function. But a key component, the uniformity in $z$,
is not a standard feature of the proof and indeed the lower bound
is non-uniform (and the upper bound is).  However, we only need
uniformity of the Laplace large deviations principle in the sense
of \cite{DE}, and this LDP is uniform. Note that the  large
deviations principle for each fixed $z$ is equivalent to   the
Laplace principle for each $z$, but that the uniformity of the
limits is different in the two principles. In outline the proof is
as follows:

\begin{enumerate}

\item In \S \ref{LOGMGFl}, we introduce the logarithmic moment
generating function $\Lambda^z_k$, and in Proposition \ref{LAMBDA}
we determine its scaling limit $\Lambda^z$.

\item We then introduce its Legendre transform $I^z =
\Lambda^{z*}$, and in Proposition \ref{LAMBDAZ*} we calculate
$I^z$.

\item In \S \ref{PROOFLD} we prove the  large deviations principle
of Theorem \ref{LD} for fixed $z$.

\item IN \S \ref{ULP}, we use a  special analysis of the weights
underlying the measure $d\mu_k^z$ given in Lemma \ref{IZLOGPCAL}
to prove the uniform Laplace large deviations principle.

\end{enumerate}

Let us also   give a  heuristic proof for $z \in M^o$ before the
formal one. Writing $z = e^{\rho/2 + i \theta}$, we then have
$$\pcal_{h^k}(z, \alpha) \sim k^{-m/2} e^{k \left(\langle
\frac{\alpha}{k}, \rho \rangle - u_{\phi}(x) - \phi(z) \right)},
$$ where $u_{\phi}$ is the symplectic potential corresponding to the \kahler potential
$\phi$.  Hence, for any set $A \subset \bar{P}$,
\begin{equation} \label{PHIKTASY} \frac{1}{k} \log \mu^z_k(A) =
\frac{1}{k} \log \sum_{\alpha \in  k A  \cap  \Z^m} e^{- k
(u_{\phi}(\frac{\alpha}{k })  + \langle \rho, \alpha \rangle)} -
\phi(z) + O(\frac{\log k}{k}).
\end{equation}
Visibly,  the rate function $I^z$ is the function on $\bar{P}$
defined by $ u_{\phi}(x) - \langle \rho, x\rangle + \phi(z).$ It
  is a convex proper function of $x \in P$.

If $z \in \mu_0^{-1}(F)$ for some face $F$, then by  Proposition
\ref{FACE}  the previous argument goes through as long as we
restrict all the calculations to $\alpha \in F$. This gives
Theorem \ref{LDINTRO} formally in all cases.

\section{\label{LOGMGFl} Large deviations principle for $\mu_k^z$ for each $z$}

In this section we prove the pointwise large deviations principle
stated in Theorems \ref{LDINTRO} and \ref{LD}. We begin by
discussing the logarithmic moment generating function and its
scaling limit. These are key ingredients in the G\"artner-Ellis
theorem, which we use to conclude the proof. Uniformity in $z$
will be considered in the following section.

In this section we consider the   moment generating function (with
$t \in \R^m$)
\begin{equation} \label{MMG} \begin{array}{lll} M_{\mu_k^z}(t) : & = &  \int_{P} e^{\langle t, x
\rangle} d\mu_k^z(x) \\ && \\
& = & \sum_{\alpha \in k P} e^{\langle \frac{ \alpha}{k}, t
\rangle} \frac{\pcal_{h^k}(\alpha, z)}{\Pi_{h^k}(z,z)}.
\end{array}
 \end{equation}
 Clearly, $M_{\mu_k^z}(t)$ is a convex function of $t$ and is a  Bernstein polynomial in the sense of \cite{Z2} for the
 function $f_t(x) = e^{\langle t, x \rangle}$, and by Proposition
 \ref{LLNMUZ}, $M_{\mu_k^z}(t)  \to e^{\langle t, x \rangle}$
 uniformly in $z$ and $t \in [0, L]$ and $k \to \infty$. However, the relevant limit
 \begin{equation} \label{LOGMGF} \Lambda^z (t): = \limsup_{k \to \infty}
 \Lambda^z_k(t),\;\;\mbox{with}\;\;
 \Lambda_k^z(t) : = \frac{1}{k} \log
M_{\mu_k^z}(k t) \end{equation}
 is the scaling limit of the   logarithmic moment generating
 function.

By a  simple and well-known application of H\"older's inequality (see e.g. \cite{E}, Proposition IVV.1.1),     $\Lambda_k^z(t)$ and $
\Lambda^z(t)$ are convex functions on $\R^m$ for every $z$.

\begin{prop}\label{LAMBDA} We have  $$  \sup_{z \in M} \left| \Lambda_k^z(t) -
\Lambda^z (t) \right| = o(1), \;\; \mbox{uniformly as} \;\; k \to
\infty,
$$
where  $\Lambda^z(t)$  is given as follows:

\begin{itemize}

\item For $z = e^{\rho/2 + i \theta}  \in M_0$, the open orbit,
 $\Lambda^z(t) = \phip(z^{ (t + \rho)/2 + i\theta }) - \phip(\rho) $.
  Here, $e^{t/2} z$ denotes the action of the real subgroup $\R_+^m$ on the open
  orbit,
  and $\phip$ is the $\T$-invariant \kahler potential on the open orbit.

  \item For $\mu_0(z) \in F$, a face,  $\Lambda^z(t) = \phi_F( e^{ (t' + \rho')/2+ i\theta}) - \phi_F( e^{\rho'/2})
    $, where $\phi_F$ is the open orbit invariant \kahler potential for the toric \kahler
    subvariety defined by $F$.

    \item When $z$ is a fixed point, then $\Lambda^z(t) = 0$.

    \end{itemize}

   \end{prop}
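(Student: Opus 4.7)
The plan is to reduce the moment generating function to a ratio of Bergman kernels by an algebraic identity, and then use the TYZ asymptotics to take the logarithmic limit.

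First, on the open orbit, write $z=e^{\rho/2+i\theta}$ and use $|s_\alpha(z)|_{h_0^k}^2 = |z^\alpha|^2 e^{-k\phip(z)}$ together with $\|s_\alpha\|_{h_0^k}^2 = \QQ_{h^k}(\alpha)$ to express
\begin{equation*}
\pcal_{h^k}(\alpha,z) = \frac{e^{\langle \alpha,\rho\rangle}}{\QQ_{h^k}(\alpha)}\,e^{-k\phip(\rho)}.
\end{equation*}
The key observation is that inserting the factor $e^{\langle \alpha, t\rangle}$ is equivalent to replacing $z$ by $w := e^{(\rho+t)/2+i\theta}$ (the action of $e^{t/2}\in\R^m_+$ on $z$), so the numerator in $M_{\mu_k^z}(kt)$ becomes $e^{k\phip(w)}\Pi_{h^k}(w,w)$. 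This yields the exact identity
\begin{equation*}
M_{\mu_k^z}(kt) \;=\; \frac{\Pi_{h^k}(w,w)}{\Pi_{h^k}(z,z)}\,e^{k(\phip(w)-\phip(z))}.
\end{equation*}
Taking $\tfrac{1}{k}\log$ and invoking the TYZ expansion \rfe{TYZ}, which gives $\Pi_{h^k}(\cdot,\cdot) = a_0 k^m(1+O(k^{-1}))$ uniformly on $M$, the ratio contributes $O(\log k/k)$, and one arrives at
\begin{equation*}
\Lambda_k^z(t) \;=\; \phip(w) - \phip(z) + O\!\left(\tfrac{\log k}{k}\right),
\end{equation*}
establishing the open-orbit formula.

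For $z\in\mu_0^{-1}(F^o)$ with $F$ a face of $\partial P$, I would invoke Proposition \ref{FACE} to restrict the relevant sum to $\alpha\in k\bar F\cap\Z^m$, and then repeat the calculation verbatim in the orbit coordinates $(\rho',\theta')$ on the subtoric variety $M_F$, with $\phi_F$ replacing $\phip$ and the Bergman kernel on $M_F$ playing the role of $\Pi_{h^k}$. The TYZ expansion on $M_F$ is again uniform, so the same $O(\log k/k)$ error bound holds. The fixed-point case is immediate since $\mu_k^z=\delta_{\mu_0(z)}$, so $\Lambda_k^z(t)=\langle \mu_0(z),t\rangle$ exactly.

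The main obstacle is the claim of \emph{global} uniformity in $z\in M$, because the limit $\Lambda^z(t)$ is defined piecewise along the stratification of $M$ by $(\C^*)^m$-orbits, and $\phip(z)$ itself diverges as $z\to\dcal$ even though the difference $\phip(w)-\phip(z)$ remains bounded. My strategy for uniformity would be to show that transitions between strata are controlled: for $z$ in a small neighborhood of $\mu_0^{-1}(F^o)$, the terms with $\alpha\in kP\setminus k\bar F$ in the defining sum for $\pcal_{h^k}(\alpha,z)$ decay at an exponential rate (in $k$) uniform on this neighborhood, by the localization estimates for monomials near $\dcal$ (as in \cite{STZ}), reducing $\Lambda_k^z(t)$ to the face formula up to $o(1)$. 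Assembling these strata-by-strata estimates with the TYZ uniformity then upgrades the pointwise limit to the uniform one asserted in the proposition; carrying out the exponential localization carefully across the finite stratification of $M$ is where the substantive work would be.
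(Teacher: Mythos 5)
Your core computation is exactly the paper's: the exact algebraic identity
\begin{equation*}
M_{\mu_k^z}(kt) \;=\; e^{k\bigl(\phip(e^{t/2}z)-\phip(z)\bigr)}\,\frac{\Pi_{h_0^k}(e^{t/2}z,e^{t/2}z)}{\Pi_{h_0^k}(z,z)}\,,
\end{equation*}
the uniform Tian--Yau--Zelditch expansion \rfe{TYZ} to reduce the kernel ratio to $O(\log k/k)$, the reduction to the subtoric variety $M_F$ via Proposition \ref{FACE} when $z\in\dcal$, and the trivial computation at a fixed point. That is precisely the paper's proof.

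Where you depart is in flagging, as a remaining ``substantive'' step, an exponential localization of the terms $\alpha\in kP\setminus k\bar F$ for $z$ near a stratum $\mu_0^{-1}(F^o)$. This step is not needed. The identity displayed above is \emph{exact} for every $z$ in the open orbit (no approximation of $\pcal_{h^k}(\alpha,z)$ is made), and the corresponding identity on each stratum of $\dcal$ is likewise exact, with $\Pi_{h_0^k}$ evaluated at points of $M$. Since the TYZ remainder $\frac1k\log\Pi_{h_0^k}(\cdot,\cdot)-\frac1k\log(a_0k^m)$ is $O(\log k/k)$ uniformly on all of $M$, one gets $|\Lambda_k^z(t)-\Lambda^z(t)|=O(\log k/k)$ on each of the finitely many orbit strata separately, hence uniformly on $M$; the continuity of $\Lambda^z(t)$ in $z$ across strata is a \emph{consequence} of this, not a hypothesis to be verified by localizing the sum. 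So you already have a complete proof once you drop the worry about strata transitions; the localization program you sketch would reprove, with more effort, what the exact identity plus \rfe{TYZ} already gives.
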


\begin{proof} When $z$ lies in the open
orbit, we may write $z = e^{\rho/2 + i \theta}$ and $s_{\alpha}(z)
= z^{\alpha} e$ where $e$ is a $\T$-invariant frame satisfying
$||e||_{h_0}^2(z) = e^{- \phip(z)}$.  Then,
\begin{equation} \label{MMGa} \begin{array}{lll} M_{\mu_k^z}(k t)
& = &  \sum_{\alpha \in k P} \frac{e^{\langle \alpha, \rho
\rangle} e^{\langle t, \alpha \rangle } e^{- k
\phip(z)}}{Q_{h_0^k}(\alpha) \Pi_{h_0^k}(z,z)}
 \\
&&
\\ && = e^{k (\phip(e^{t/2}z) - \phip(z))} \sum_{\alpha \in k P}
\frac{ e^{\langle  t + \rho, \alpha \rangle} e^{- k \phip(e^{t/2}
z)} }{Q_{h_0^k}(\alpha)
\Pi_{h_0^k}(z,z)}  \\
&&
\\ && = e^{k (\phip(e^{t/2}z) - \phip(z)))}
\frac{\Pi_{h_0^k}(e^{t/2} z, e^{t/2 }z) }{ \Pi_{h_0^k}(z,z)}.
\end{array}
 \end{equation}
Here, $e^{t/2} z$ denotes the $\C^*$-action (restricted to
$\R_+^m$).

 It follows that
\begin{equation}\label{LAMBDAKZT}  \begin{array}{lll} \Lambda_k^z(t)  && = \phip(e^{t/2}z) -
\phip(z) + \frac{1}{k} \log \Pi_{h_0^k}(e^{t/2} z, e^{t/2 }z) -
\frac{1}{k} \log \Pi_{h_0^k}(z,z)  \\ && \\ && = \phip(e^{t/2}z) -
\phip(z)) + O(\frac{\log k}{k}), \end{array}
\end{equation}
with remainder uniform in $z$ by (\ref{TYZ}).

The calculation for other $z$ is similar, using Proposition
\ref{FACE} to reduce the $\T$ action to the subtoric variety
corresponding to $F$, and replacing $\phip$ by the open orbit
toric \kahler potential $\phi_F$  on the subtoric variety. At the
vertex, the sum reduces to the vertex and the logarithmic moment
generating function equals zero  by Proposition \ref{FACE}. Since
the remainders all derive from a uniform Bergman-\szego kernel
expansion (\ref{TYZ}), there is a uniform limit as $k \to \infty$
for all $z$.

\end{proof}

\subsection{The Legendre duals $\Lambda_k^{z *}$ and  $\Lambda^{z *}$}

The  Fenchel-Legendre transform of a convex function $F$  is
the convex lower semicontinuous convex function defined
by
$$F^{ *}(x)  = \lcal F (x) = \sup_{t \in \R^m}\{ \langle x,
t \rangle - F(t) \}. $$
 We are concerned with the convex functions $F(t) =
\Lambda_k^z(t)$ and $F(t) = \Lambda^z(t)$.

In the following Proposition, we refer to the {\it relative interior}
  $r i (A)$ of a set $A \subset \bar{P}$. By definition, $ri(P) = P^o$ and for a face $F$,
  $ri (F) = F^o$, the interior of $F$ viewed as a convex subset of
  the affine space of the same dimension which it spans.

\begin{prop} \label{LAMBDAZ*} $\Lambda^{z *}(x) = I^z $ is the convex function
on $\bar{P}$ given by the following:

\begin{enumerate}

\item When    $z = e^{\rho/2 + i \theta}$ lies in the open orbit,
then $\Lambda^{z*}(x) =    u_{0}(x)  + \phip(e^{\rho/2+ i\theta}) - \langle x,
\rho \rangle$ for all  $x \in \bar{P}$ and $\dcal(\Lambda^{z*}) =
\bar{P}$;

\item When    $\mu_0(z)$ lies in a face $F$, and $z = e^{\rho'/2 +
i \theta'}$ with respect to orbit coordinates on $\mu_0^{-1}(F)$,
then $\Lambda^{z*}(x) =    u_{F}(x) + \phi_F(e^{\rho'/2+ i\theta}) - \langle x',
\rho' \rangle$ when $x \in F$ and $\Lambda^{z*}(x) = \infty$ if $x
\notin \bar{F}$. Thus, $\dcal_{\Lambda^{z*}} = \bar{F}$ (cf.
(\ref{DOMAIN}))

\item When $\mu_0(z) $ is a vertex $v$, then $\Lambda^{z^*}(v) =
0$ and $\Lambda^{z*}(x) = \infty$ if $x \not= v$ and
 $\dcal_{\Lambda^{z*}} = \{v\}.$

 \item For each $z \in M$, and for any $x \in ri \dcal(\Lambda^{z*})$,
there exists $t = t_*(x, z) \in \R^m$ so that
 $\nabla_t I^z(t) = x$.

\end{enumerate}

\end{prop}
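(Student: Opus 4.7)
The plan is to apply the Fenchel--Legendre transform directly to the explicit formulas for $\Lambda^z(t)$ provided by Proposition~\ref{LAMBDA}, treating the three geometric cases separately, and to extract part~(4) from the fact that $\nabla\phip$ (respectively $\nabla\phi_F$) is the moment map of the open orbit (respectively of the corresponding subtoric variety).

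For $z$ in the open orbit, $\T$-invariance lets me view $\phip$ as a smooth strictly convex function of $\rho\in\R^m$, and Proposition~\ref{LAMBDA} gives $\Lambda^z(t)=\phip(t+\rho)-\phip(\rho)$. Substituting $s=t+\rho$ in the supremum defining $\Lambda^{z*}$ factors the answer as $\Lambda^{z*}(x)=\sup_{s\in\R^m}(\langle x,s\rangle-\phip(s))-\langle x,\rho\rangle+\phip(\rho)$. By definition of $u_0$ as the Legendre dual of $\phip$, the supremum equals $u_0(x)$ on $\bar P$; the standard fact that $\nabla\phip\colon\R^m\to P^o$ is a diffeomorphism then shows that $u_0$ extended by $+\infty$ off $\bar P$ is proper and lower semicontinuous, so $\dcal(\Lambda^{z*})=\bar P$ with the stated formula.

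For $\mu_0(z)\in F$ I would proceed analogously, but first isolate the structural fact that $\Lambda^z(t)=\phi_F(t'+\rho')-\phi_F(\rho')$ depends only on the projection $t'$ of $t$ under $Lie(\T)\to Lie(\T/H)$, where $H$ is the isotropy of $z$. Writing $t=(t',t'')$ in a complementary splitting, any non-vanishing component of $x$ paired with $t''$ forces $\Lambda^{z*}(x)=+\infty$, which is exactly the statement that $\Lambda^{z*}\equiv+\infty$ off the affine hull of $\bar F$. On $\bar F$ itself the argument from the open orbit case applies verbatim to the subtoric variety $\mu_0^{-1}(\bar F)$ with its canonical data $(\phi_F,u_F)$, yielding the claimed formula and $\dcal(\Lambda^{z*})=\bar F$. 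The vertex case is the degenerate extreme in which $\Lambda^z$ is affine in $t$ and $\Lambda^{z*}$ is the convex indicator of $\{v\}$.

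For part~(4), on the open orbit $\Lambda^z$ is smooth and strictly convex with $\nabla_t\Lambda^z(t)=\nabla\phip(t+\rho)=\mu_0(e^{(t+\rho)/2+i\theta})$, which is the moment map in orbit coordinates and therefore a diffeomorphism onto $P^o$; the point $t_*(x,z)$ is defined to be the unique preimage of $x$ under this map. The same argument works on faces, using that $\nabla\phi_F$ is a diffeomorphism of the quotient $Lie(\T/H)$ onto $F^o$. I expect the main obstacle to be conceptual bookkeeping rather than analytical: one must keep careful track of the identifications between $Lie(\T/H)^*$ and the affine hull of $\bar F$, and interpret the formula ``$\nabla_t I^z(t)=x$'' in the statement as ``$\nabla_t\Lambda^z(t)=x$'' (equivalently $t\in\partial I^z(x)$), since $I^z=\Lambda^{z*}$ is a function of $x$, not of $t$.
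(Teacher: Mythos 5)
Your proposal is correct and follows the same basic plan as the paper: compute $\Lambda^{z*}$ separately on the open orbit, on the faces, and at the vertices, using Proposition~\ref{LAMBDA}, and extract part~(4) from the fact that $\nabla_t\Lambda^z(t)$ is the moment map. The one place where you genuinely diverge is in part~(1): you perform the substitution $s = t + \rho$ inside the supremum, which reduces the computation in a single line to the definition $u_0 = \lcal\phip$ valid for \emph{all} $x$, so the boundary case $x \in \partial P$ is handled automatically once one knows (from (\ref{CANSYMPOT})) that $u_0$ is finite and continuous on $\bar P$ and $+\infty$ off it. The paper instead first locates the critical point $t_*(x,z)$ for $x \in P^o$, notes that it escapes to infinity when $x \in \partial P$, and then runs a separate limit argument along a one-parameter subgroup $e^{\tau\omega}$ to pin down the boundary value. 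Your route is shorter and avoids that limit argument entirely; what it costs is that it leans on the prior knowledge of the domain and boundary behavior of $u_0$ rather than re-deriving it, and it doesn't produce the explicit formula (\ref{tstar}) for $t_*(x,z)$ that the paper later cites. You are also right to flag the notational slip in the statement of (4): the intended assertion is $\nabla_t\Lambda^z(t_*) = x$, i.e.\ $t_* \in \partial I^z(x)$, which is exactly how the paper uses it in (\ref{NOTSTEEP}).
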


\begin{proof}

\noindent{\bf (1)}  If $z = e^{\rho/2 + i \theta}$  lies in the
open orbit,
\begin{equation} \label{LAMBDA*} \Lambda^{z*}(x) = \sup_{t \in \R^m} \left( \langle x, t
\rangle - \phip(e^{ t/2}
z) \right) + \phip(z).
\end{equation} We observe that $ \langle x, t \rangle - \phip(e^{ t/2}
z)$ is concave in $t$ and that
$$\nabla_t \phip(e^{ t/2}
z) = \nabla_{\rho} \phip(e^{ t/2}
z) =
\mu_0(e^{t/2} z). $$   The supremum  in (\ref{LAMBDA*}) can only
be achieved at the unique critical point $t = t_*(x,z)$ such that
$$ x = \mu_0(e^{t/2} z) \iff e^{t/2} z = \mu_0^{-1} (x).  $$
 We note that $e^{t/2} z$ lies in the open orbit. If $x \in P^o$,
then there exists a unique $t = t_*(x,z)$ (denoted $\tau_{x}^P(z)$
in \cite{STZ}) such that  $\mu_0^{-1}(x) = e^{t/2} z$, given by
\begin{equation} \label{tstar} t_*(x,z)  = \log \mu_0^{-1}(x) - \log
|z|. \end{equation}
 In this case,  we have
$$\begin{array}{lll} \Lambda^{z*}(x) && =  \langle x,
t_*(x,z) \rangle  -  \phip( e^{t_*(x,z)/2}z)  + \phip(z)
\\ && \\ & = & \langle x,
\log \mu_0^{-1}(x) \rangle  -  \phip(2 \log \mu_0^{-1}(x))   -
\langle x, \rho \rangle + \phip(z)) \\ && \\ & = &   u_0(x)
 + \phip(z) - \langle x, \rho \rangle.
\end{array} $$
This proves (1) when $x \in P^o$.

Now consider the case where $z \in M^o$ and  $x \in \partial P$.
We note that $L^z(x) :=  u_0(x) - \langle x, \rho \rangle +
\phip(z)$ is a continuous function of $x \in  \bar{P}$. We
claim that $\Lambda^{z*}(x) $ is continuous, i.e. it continues to
equal this function when $x \in
\partial P$, where $u_{0}(x) = 0$.
Since the closure of the open orbit is all of $M$, there exists a
one parameter subgroup $\tau \omega$ with $|\omega| = 1, \tau \in
\R$ so that $\lim_{\tau \to \infty} \mu_0(e^{\tau \omega} z) = x$.

 Since $\Lambda^{z*}(x) $ is
lower semicontinuous, we automatically have
$$\Lambda^{z*}(x) \leq \liminf_{\tau \to \infty} \Lambda^{z*} ( \mu_0(e^{\tau
\omega} z)) = L^z(x). $$
To prove the reverse inequality, we use that
$$\Lambda^{z*}(x) \geq \lim_{\tau \to \infty} \left( \langle x, \tau \omega \rangle - (\phip( e^{\tau \omega /2} z)
- \phip(z)) \right), \;\; z = e^{\rho/2 + i \theta}. $$ We now
claim that
$$\left( \langle x, \tau \omega \rangle - (\phip(e^{\tau \omega /2} z)
- \phip(\rho)) \right) - L^z(x) = \langle x, \rho + \tau \omega
\rangle - \phip(e^{\tau \omega /2} z) - u_0(x) \to 0, \;\; \mbox{as}
\; \tau \to \infty. $$ Indeed, $u_0(\mu_0(e^{\tau \omega/2} z)) =
\langle \mu_0 (e^{\tau \omega/2} z), \rho + \tau \omega\rangle -
\phip (e^{\tau \omega /2} z)$, and $u_0$ is continuous, so the claim
reduces to showing that $ \langle x - \mu_0 (e^{\tau \omega/2} z),
\rho + \tau \omega \rangle \to 0.$ Near the boundary, we may
approximate the moment map by that of the linear model to check
that the expression tends to zero exponentially fast. For
instance, in one dimension, with $x = 0$ and $|z| = 1 = - \omega$
the expression becomes $ e^{- \tau} (\rho - \tau).$ It follows
that  $\Lambda^{z*}(x) \geq L^z(x).$ This proves (1) and also (4)
when $x \in \bar{P}$ and $z \in M^o$.

\medskip

\noindent{\bf (2)} Now let us consider the case where $z \in F^o$.
We then consider the toric subvariety equal to the closure of
$\mu_0^{-1}(F)$ in $M$. We pick a base point on this toric
subvariety and consider orbit coordinates $z' = e^{\rho'/2 + i
\theta'}$ for the quotient of $\T\backslash \T_z$ where $\T_z$ is
the isotropy group of $z$. Then,
\begin{equation} \label{LAMBDA*2} \Lambda^{z*}(x) \geq \sup_{t \in \R^m} \left( \langle x,
t \rangle - \phi_F(t' + \rho') \right) - \phi_F(\rho').
\end{equation}
Here, we note that $e^t \cdot z  = e^{t'} \cdot z$ where $e^{t'}$
is a representative of $e^t$ in $\T\backslash \T_z$.

As above, we find that the supremum is only achieved when $x \in
F$ and then the calculation becomes the open orbit calculation for
the sub-toric variety.  By the previous argument, the open orbit
formula extends to the closure of $\mu_0^{-1}(F)$ by continuity
and in addition (4) holds for $x \in F^o$.

On the other hand, if $x \notin F$ then the supremum is not
achieved. Write $x = (x', x'')$ and similarly for $t$. Then
(\ref{LAMBDA*2}) is the sum of $\langle x'', t'' \rangle$ plus
terms depending only on $t'$. If $x'' \not= 0$ we can let $t'' = r
x''$ with $r > 0$ and find that the supremum equals $+ \infty$.

\medskip

\noindent{\bf (3)} In this case, $\Lambda^z = 0 $ and it is
obvious that the supremum is infinite if $x \not= 0$. Here, the
coordinates are chosen so that the vertex occurs at $0$.

\end{proof}

\begin{rem} As a check on signs, we note that $\Lambda^{z*}(x)$ should be non-negative and
convex. Indeed, $\Lambda^{z^*}(x)$ is convex and takes its minimal value of $0$ at  $\mu(z) = x$.
\end{rem}

We end this section with the following  important ingredient in
the uniform estimates:
 \begin{prop} \label{LAMBDAVSLAMBDAK} We have
$$\Lambda^{z *}_k(t) = \Lambda^{z*}(t) + O(\frac{\log k}{k}),\;\; \Lambda^{z *}_{k \delta}(t)
 = \Lambda^{z*}_{\delta} (t) + O(\frac{\log k}{k})$$
where the remainder is uniform in $z,t$ (and $\delta$).
\end{prop}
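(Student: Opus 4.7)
My plan is to deduce the proposition directly from a sharpened pointwise form of Proposition \ref{LAMBDA}, together with the contractive property of the Fenchel--Legendre transform with respect to the sup-norm. The key observation is that the error in Proposition \ref{LAMBDA} is not merely $o(1)$ but $O(\log k/k)$ uniformly in \emph{both} $z$ and the dual variable $t$.

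First I would revisit equation \eqref{LAMBDAKZT}, which on the open orbit gives the exact identity
\begin{equation*}
\Lambda_k^z(t) - \Lambda^z(t) = \tfrac{1}{k}\log \Pi_{h_0^k}(e^{t/2}z, e^{t/2}z) - \tfrac{1}{k}\log \Pi_{h_0^k}(z,z),
\end{equation*}
with analogous formulas in the face and vertex cases coming via Proposition \ref{FACE}. The Tian--Yau--Zelditch expansion \eqref{TYZ} gives $\Pi_{h_0^k}(w,w)= a_0 k^m + O(k^{m-1})$ \emph{uniformly} for $w$ in the compact manifold $M$; taking logs and dividing by $k$ yields $\tfrac{1}{k}\log \Pi_{h_0^k}(w,w) = \tfrac{m}{k}\log k + \tfrac{\log a_0}{k} + O(k^{-2})$, uniformly in $w\in M$. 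Since $e^{t/2}z$ again lies in $M$ no matter how large $|t|$ is, the two terms above differ by $O(\log k/k)$ uniformly in $(z,t)\in M\times\R^m$. The compactness of $M$ is the crucial point that bypasses the unboundedness of $t$.

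Second I would invoke the fact that the Fenchel--Legendre transform is nonexpansive in sup-norm: if $\|F-G\|_\infty\le \varepsilon$ with possibly extended-real values, then for any $x$,
\begin{equation*}
F^*(x) = \sup_t(\langle x,t\rangle - F(t)) \ge \sup_t(\langle x,t\rangle - G(t)) - \varepsilon = G^*(x) - \varepsilon,
\end{equation*}
and symmetrically, so $\|F^*-G^*\|_\infty\le \varepsilon$. Applying this with $F=\Lambda_k^z$, $G=\Lambda^z$, and $\varepsilon=C\log k/k$ gives the first claim $\Lambda_k^{z*}(x)=\Lambda^{z*}(x)+O(\log k/k)$ uniformly in $z\in M$ and $x\in\bar P$.

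For the second identity, I would simply run the same argument with $k$ replaced by $k\delta$ in the definitions of $\Lambda_{k\delta}^z$ and $M_{\mu_{k\delta}^z}$; the TYZ expansion remains uniform at level $k\delta$, and the remainder becomes $O(\log(k\delta)/(k\delta))$, which for $\delta$ bounded away from $0$ and $\infty$ is $O(\log k/k)$ uniformly in $\delta$. The main obstacle I anticipate is merely bookkeeping: verifying that the face/vertex versions, where $\Lambda^{z*}$ takes the value $+\infty$ off $\bar F$, are handled correctly by the nonexpansion lemma; this works because adding a uniform constant commutes with $\sup$ even in the presence of $+\infty$ values, so the extended-real Legendre transforms still satisfy the contraction estimate.
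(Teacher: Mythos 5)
Your argument is correct and is essentially the paper's own proof, just repackaged: the paper starts from the exact identity \rfe{LAMBDAKZT} for $\Lambda_k^z$, uses the TYZ expansion \rfe{TYZ} to see that the two \szego-kernel correction terms are $O(\log k/k)$ uniformly over $M$, and then passes the uniform error through the supremum defining the Legendre transform — which is precisely your ``nonexpansion in sup-norm'' observation made explicit. Your added remarks on the extended-real cases off $\bar F$ and on replacing $k$ by $k\delta$ are correct bookkeeping and do not change the argument.
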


\begin{proof} By (\ref{LAMBDAKZT}) and by (\ref{TYZ}), we have

\begin{equation} \begin{array}{lll} \Lambda_k^{z*}(x)  && = \sup_{t \in
\R^m}\left( \langle t, x \rangle -  \phip(e^{t/2} z)  -
\frac{1}{k} \log \Pi_{h_0^k}(e^{t/2} z, e^{t/2 }z)  \right) +
\phip(z) + \frac{1}{k} \log \Pi_{h_0^k}(z,z)\\&& \\
&& = \sup_{t \in \R^m}\left( \langle t, x \rangle - \phip(e^{t/2}
z)    \right) + \phip(z) + O(\frac{\log k}{k}) .
\end{array}
\end{equation}

\subsection{\label{PROOFLD}Proof of Theorem \ref{LDINTRO}: Pointwise large deviations }

We now prove that for each $z$, $d\mu_k^z$ satisfies the large
deviations principle with rate function
\begin{equation} \label{RATE} I^z = \Lambda^{z *}. \end{equation}
  We use the
two notations interchangeably in what follows. The proof is an
application of the G\"artner-Ellis theorem as in \cite{DZ,dH,E}.
We postpone a discussion of uniformity to the next section.  For
the sake of completeness we recall the statement of the
G\"artner-Ellis theorem:\medskip

 {\it Let  $\mu_k$ be a sequence of probability measures on $\R^n$. Assume that
 \begin{enumerate}

 \item  the scaled logarithmic   moment generating function
 $\Lambda(t)$ of $\{d\mu_k\}$
  exists and that $0$ lies in its domain   $\dcal(\Lambda)$;

  \item
$\Lambda$ is lower-semicontinuous and differentiable on $\R^n$.
\end{enumerate} Then $\mu_k$ have an LDP with speed $k$ and rate function
$\Lambda^*$.} Here,  the domain of a convex function $F$ is
defined by
\begin{equation} \label{DOMAIN} \dcal_{\Lambda} = \{t \in \R^m: F(t) < \infty\}.
\end{equation}

\medskip

\noindent {\it Proof of the large deviations statement}:  It
suffices to  verify that the hypotheses of the G\"artner-Ellis
theorem are satisfied. All the work has been done in the previous
section.

It follows from Proposition \ref{LAMBDA} that $\Lambda^z(t)$
 satisfies the assumption (1)  of the G\"artner-Ellis theorem:
  the limit (\ref{LOGMGF}) exists for all $t$, and
  the origin
belongs to its {\it domain} $\dcal_{\Lambda^z}$.   Indeed, for all
$z \in M$, $\dcal_{\Lambda^z} = \R^m$.

Further,  $\Lambda^z$ is differentiable everywhere for every $z$
with
\begin{equation} \label{NOTSTEEP} \nabla_t \Lambda^z(t) = \nabla \phip (e^{ t/2}z) = \mu_0(e^{t/2} z). \end{equation}
It follows that $\mu_k^z$ satisfy for each $z$ the LDP with speed
$k$ and rate function $I^z = \Lambda^{z*}$.

\qed

\section{\label{ULP} Uniform Laplace Principle and uniform Varadhan's Lemma}

We now consider uniformity of the large deviations principle and
in particular, the key issue of uniformity of Varadhan's Lemma.
Our goal is to prove the first part of Theorem \ref{GEO}, which we
restate for clarity:
\begin{theo} \label{C0} For any $L > 0$,  $\sup_{(z, t) \in M
\times [0, L]} |\psi_k(t, z) - \psi_t(z)| = 0. $ \end{theo}

In view of  Proposition \ref{INTROPROP}, we need to understand the
convergence of the sequence of relative  \kahler potentials
(\ref{PHIKTZINT}), of which the key issue is the convergence of
\begin{equation}\label{TILDEPHI}  \tilde{\psi}_k(z,t): =  \frac{1}{k} \log  \int_P e^{k t (R - f(x))}
d\mu_k^z(x).
\end{equation}

 We  prove the $C^0$ convergence of $\psi_k \to \psi$ as an
application of Varadhan's Lemma to (\ref{TILDEPHI}) (cf.
\cite{dH}, Theorem III. 13). We recall the statement of the Lemma:
\medskip

\noindent{\bf Varadhan's Lemma} {\it Let $d\mu_k$ be probability
measures on $X$ which satisfy the LDP with rate $k$ and rate
function $I$ on $X$. Let $F$ be a continuous function on $X$ which
is bounded from above. Then
$$\lim_{k \to \infty} \frac{1}{k} \log \int_{X} e^{k F(x)}
d\mu_k(x) = \sup_{x \in X} [F(x) - I(x)]. $$ }

It follows immediately from Varadhan's Lemma and the pointwise
large deviations result of \S \ref{LOGMGFl} that $\psi_k(t, z) \to
\psi(t,z)$ pointwise for each $(t,z)$. However we would like to
prove uniform convergence for $t \in [0, L]$ and $z \in M$.

 In proving $C^0$ convergence, we do not use any special
 properties of $t (R - f)$ beyond the fact that it is a continuous
 function on the closed polytope $P$. Hence, our uniform
 convergence proof automatically implies the uniform Laplace
 principle stated at the beginning of \S \ref{LDMUKZ}.

 \subsection{Weights and rates}

\end{proof}

 The
following Lemma reflects the fact that the weights of our special
measure are already very close to the rate function:

\begin{lem} \label{IZLOGPCAL}  Let  $L_k(z, \frac{\alpha}{k}) = \frac{1}{k} \log
\frac{|s_{\alpha}(z)|^2_{h^k}}{Q_{h^k}(\alpha)} +
I^z(\frac{\alpha}{k})$. Then $L_k(z, \frac{\alpha}{k}) = -
\frac{1}{k} \log Q_{h^k}(\alpha) + u(\frac{\alpha}{k})$ for   $z
\in M^o$ and  satisfies
$$L_k(z, \frac{\alpha}{k}) = O(\frac{1}{k}), \;\; (k \to \infty) $$
uniformly in $z \in M^o$ and $\alpha \in k P$. The same formula
and uniform asymptotics hold when
  $\frac{\alpha}{k} \in F$ and $z \in \mu^{-1}(F)$.

\end{lem}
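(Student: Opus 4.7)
The plan is to verify both assertions by direct computation, exploiting an exact algebraic cancellation of the $z$-dependent terms between $\tfrac{1}{k}\log \tfrac{|s_\alpha(z)|^2_{h^k}}{Q_{h^k}(\alpha)}$ and $I^z(\alpha/k)$, and then reducing the asymptotic bound to the uniform estimate (\ref{QQa}) for the monomial norms.

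First I would treat the case $z \in M^o$. Using orbit coordinates $z = e^{\rho/2 + i\theta}$ and the invariant frame $e$ satisfying $\|e\|_{h_0}^2 = e^{-\phip(z)}$, one has $s_\alpha(z) = z^\alpha e$, so that
$$\frac{1}{k}\log \frac{|s_\alpha(z)|^2_{h^k}}{Q_{h^k}(\alpha)} = \Bigl\langle \tfrac{\alpha}{k}, \rho\Bigr\rangle - \phip(z) - \frac{1}{k}\log Q_{h^k}(\alpha).$$
By Proposition \ref{LAMBDAZ*}(1), $I^z(\alpha/k) = u_0(\alpha/k) + \phip(z) - \langle \alpha/k, \rho\rangle$. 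Adding these two expressions, the $z$-dependent terms $\phip(z)$ and $\langle \alpha/k, \rho\rangle$ cancel identically, yielding
$$L_k\bigl(z, \tfrac{\alpha}{k}\bigr) = u_0\bigl(\tfrac{\alpha}{k}\bigr) - \frac{1}{k}\log Q_{h^k}(\alpha),$$
which is the stated identity (with no residual $z$-dependence, as predicted). The uniform asymptotic bound then follows directly from (\ref{QQa}), which asserts $\tfrac{1}{k}\log Q_{h^k}(\alpha) = u_0(\alpha/k) + O(\log k/k)$ uniformly in $\alpha \in kP \cap \Z^m$.

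For the case $z \in \mu_0^{-1}(F)$ with $\alpha/k \in F$, I would repeat the computation inside the subtoric variety $M_F$. By Proposition \ref{FACE} only monomials with $\alpha \in k\bar F$ contribute. Working in orbit coordinates on $M_F$ and using the analogous invariant frame with norm-squared $e^{-\phi_F}$, the same cancellation occurs, with $(\phip,u_0)$ replaced by $(\phi_F, u_F)$ in the identity supplied by Proposition \ref{LAMBDAZ*}(2). The uniform bound on $L_k$ for $\alpha/k$ ranging over $F$ then reduces to the interior case of (\ref{QQa}) applied inside $M_F$.

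The main obstacle is establishing that the $Q_{h^k}$ asymptotic is truly uniform across all lattice points of $kP$, including those lying on lower-dimensional faces where $u_0$ has the logarithmic singularities described in (\ref{CANSYMPOT}). The cited estimate (\ref{QQa}) is stated as uniform, but one should verify that this strength of uniformity propagates consistently into the face-wise computation on each $M_F$, so that the final bound on $L_k$ holds uniformly in $z \in M$ and $\alpha \in kP \cap \Z^m$ simultaneously across all strata. Once this uniformity is in hand, the lemma follows immediately from the algebraic identity above.
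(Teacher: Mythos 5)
Your computation for $z \in M^o$ is exactly the paper's: write $s_\alpha(z)=z^\alpha e$ with $\|e\|_{h_0}^2=e^{-\phip}$, so $\tfrac{1}{k}\log\tfrac{|s_\alpha(z)|^2_{h^k}}{Q_{h^k}(\alpha)}=\langle\tfrac{\alpha}{k},\rho\rangle-\phip(\rho)-\tfrac{1}{k}\log Q_{h^k}(\alpha)$, substitute the Proposition \ref{LAMBDAZ*}(1) formula for $I^z$, cancel the two $z$-dependent terms, and conclude by (\ref{QQa}). That part is fine.

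The face case has a small but genuine gap in the way you close the estimate. The quantity $Q_{h^k}(\alpha)$ appearing in $L_k$ is always the $H^0(M,L^k)$ norm -- the integral over the \emph{full} variety $M$ -- regardless of whether $z$ or $\alpha/k$ sits on a face. After the cancellation you correctly arrive at $L_k = u_F(\tfrac{\alpha}{k}) - \tfrac{1}{k}\log Q_{h^k}(\alpha)$, but then you say this "reduces to the interior case of (\ref{QQa}) applied inside $M_F$." That is not a direct reduction: (\ref{QQa}) applied on $M_F$ controls the $M_F$-integral $\int_{M_F}|s_\alpha|^2_{h^k}\,\omega_F^{m-j}/(m-j)!$, which is a different object from $Q_{h^k}(\alpha)$. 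To finish you need one of two extra observations: either (a) the identity $u_F = u_0|_F$, so that the single, $M$-level estimate (\ref{QQa}) (which says $\tfrac{1}{k}\log Q_{h^k}(\alpha)=u_0(\tfrac{\alpha}{k})+O(\tfrac{\log k}{k})$ uniformly over \emph{all} lattice points of $kP$, boundary included) immediately yields $L_k=O(\tfrac{\log k}{k})$; or (b) a separate argument that the $M$-norm and the $M_F$-norm of a boundary monomial share the same logarithmic asymptotics. The paper's own proof implicitly takes route (a): it simply writes $u_0(\tfrac{\alpha}{k})$ in the face formula for $I^z$ and invokes (\ref{QQa}) once. Your proposal would be complete with the single added remark that $u_F$ agrees with $u_0$ along $\bar F$ (as can be checked from the explicit Legendre-transform formulas and is consistent with (\ref{CANSYMPOT})), so the same $Q_{h^k}$ estimate covers every stratum; this also dissolves the uniformity worry you raise at the end, since there is only one norm and one estimate, not a family of face-wise ones to glue.
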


\begin{proof}

First assume that  $z \in M^o$. Then,  \begin{equation}
\label{ZINT}\begin{array}{lll} \frac{1}{k} \log
\frac{|s_{\alpha}(z)|^2_{h^k}}{Q_{h^k}(\alpha)} &= & \langle
\frac{\alpha}{k}, \rho \rangle - \phip(\rho) - \frac{1}{k} \log
Q_{h^k}(\alpha),\end{array}
\end{equation}
 while
\begin{equation} \label{IZALPHA} I^z(\frac{\alpha}{k}) = - \langle
\frac{\alpha}{k}, \rho \rangle + u_0(\frac{\alpha}{k}) + \phip
(\rho).\end{equation}  Hence,
\begin{equation} \label{IZALPHAa}\frac{1}{k} \log
\frac{|s_{\alpha}(z)|^2_{h^k}}{Q_{h^k}(\alpha)} +
I^z(\frac{\alpha}{k}) = - \frac{1}{k} \log Q_{h^k}(\alpha) + u_0
(\frac{\alpha}{k}). \end{equation}  We observe that the right side
is independent of $z$ and extends continuously from
$\frac{1}{k}$-lattice points $\frac{\alpha}{k}$ to general $x \in
P$,  proving the first statement of the Proposition.

Now suppose that $z \in F$, a face of $\partial P$. Then
$s_{\alpha}(z) = 0$ and  $\log
\frac{|s_{\alpha}(z)|^2_{h^k}}{Q_{h^k}(\alpha)} = - \infty$ unless
$\frac{\alpha}{k} \in \overline{F}$. Also, $I^z(\frac{\alpha}{k})
= + \infty$ when $\frac{\alpha}{k} \notin \overline{F}$. The
formula above gives a meaning to $L_k(z, \frac{\alpha}{k})$ in
this case. When $z \in F$ and $\frac{\alpha}{k} \in \overline{F}$
then in slice-orbit coordinates for $F$,
$$\frac{1}{k} \log \frac{|s_{\alpha}(z)|^2_{h^k}}{Q_{h^k}(\alpha)} =
\langle \frac{\alpha''}{k}, \rho'' \rangle - \phi_F(\rho'') -
\frac{1}{k} \log Q_{h^k}(\alpha), $$ while
$$I^z(\frac{\alpha}{k}) = - \langle \frac{\alpha''}{k}, \rho'' \rangle
+ u_0 (\frac{\alpha}{k}) + \phi_F(\rho''). $$ Thus, the same
formula holds in the case $z \in \dcal$. Indeed, the extension of
$L_k(z, \frac{\alpha}{k})$ by constancy to $M \times P$ is
consistent with its extension using the definition of $I^z$. The
estimate (\ref{QQa}) then completes the proof.

 \end{proof}

\subsection{Uniform large deviations upper bound}

In this section we prove that the large deviations upper bounds
are uniform.

\begin{prop}\label{UNIFUB}  For any compact subset $K \subset \bar{P}$, we have the uniform  upper bound
$$\frac{1}{k} \log \mu_k^z(K) \leq -  \inf_{x \in K}
I^z(x) +   O(\frac{\log k}{k}), $$ where the remainder is uniform
in $z$.

\end{prop}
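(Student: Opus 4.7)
The approach is to reduce everything to the uniform estimate of Lemma \ref{IZLOGPCAL}, which says that the logarithm of each weight $\pcal_{h^k}(\alpha,z) = |s_{\alpha}(z)|^2_{h^k}/Q_{h^k}(\alpha)$ is, up to an $O(1/k)$ uniform error, equal to $-I^z(\alpha/k)$. With that, $\mu_k^z(K)$ becomes a finite sum of $O(k^m)$ terms each bounded by $e^{-kI^z(\alpha/k)+O(1)}$, and a crude sup-bound yields the desired inequality. The steps are as follows.

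\medskip

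First, I would write
\begin{equation*}
\mu_k^z(K)\;=\;\frac{1}{\Pi_{h^k}(z,z)}\sum_{\alpha\in kP,\;\alpha/k\in K}\frac{|s_\alpha(z)|^2_{h^k}}{Q_{h^k}(\alpha)}.
\end{equation*}
By Lemma \ref{IZLOGPCAL}, $\frac{1}{k}\log\bigl(|s_\alpha(z)|^2_{h^k}/Q_{h^k}(\alpha)\bigr) = -I^z(\alpha/k)+O(1/k)$ uniformly in $z\in M$ and in $\alpha\in kP$ (with the convention that both sides are $-\infty$ when $\alpha/k$ is not in the face containing $\mu_0(z)$, so those lattice points contribute zero). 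Therefore each term satisfies $|s_\alpha(z)|^2_{h^k}/Q_{h^k}(\alpha)\le C e^{-kI^z(\alpha/k)}$ for a constant $C$ independent of $z,\alpha,k$. Since $\#\{\alpha\in kP\cap\Z^m\}\le C'k^m$, and since the discrete minimum over $K\cap k^{-1}\Z^m$ is at least the continuous minimum $\inf_{x\in K}I^z(x)$, the numerator is bounded by
\begin{equation*}
\sum_{\alpha/k\in K}\frac{|s_\alpha(z)|^2_{h^k}}{Q_{h^k}(\alpha)}\;\le\;CC'\,k^m\,\exp\!\bigl(-k\inf_{x\in K}I^z(x)\bigr).
\end{equation*}

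\medskip

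Second, for the denominator, I would establish a uniform lower bound $\Pi_{h^k}(z,z)\ge c_0>0$ (for $k$ large). On the open orbit this follows directly from the TYZ expansion (\ref{TYZ}), which gives $\Pi_{h^k}(z,z)\sim a_0 k^m$. Along the strata $\mu_0^{-1}(F^o)$ the analogous Bergman kernel asymptotics for the sub-toric variety $M_F$ give $\Pi_{h^k}(z,z)\sim a_0(F)k^{\dim F}$ with $a_0(F)>0$, while at a vertex fixed point $v$ a single surviving term gives $\Pi_{h^k}(v,v)=|s_v(v)|^2_{h^k}/Q_{h^k}(v)=e^{O(1)}$ by the Lemma applied with $I^v(v)=0$. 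In every case $\Pi_{h^k}(z,z)\ge c_0$ uniformly. Combining with the numerator bound,
\begin{equation*}
\frac{1}{k}\log\mu_k^z(K)\;\le\;-\inf_{x\in K}I^z(x)+\frac{m\log k+O(1)}{k}\;=\;-\inf_{x\in K}I^z(x)+O\!\Bigl(\frac{\log k}{k}\Bigr),
\end{equation*}
with remainder independent of $z$, which is exactly the claim.

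\medskip

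The main obstacle in this scheme is the uniform lower bound on $\Pi_{h^k}(z,z)$ across the different orbit strata of $M$. Inside any fixed stratum the TYZ expansion is standard, but at points where $z$ lies near the boundary of a face the local dimension jumps and the expansion degenerates. The cleanest way to deal with this is to note that the argument above is truly strata-by-strata: for every $z$ there is at least one $\alpha\in kP\cap\Z^m$ with $\alpha/k$ as close as we wish to $\mu_0(z)$ \emph{within the effective domain of} $I^z$, so Lemma \ref{IZLOGPCAL} already provides a term of size $e^{-kI^z(\alpha/k)+O(1)}=e^{O(1)}$, bounding $\Pi_{h^k}(z,z)$ below by a positive constant uniformly in $z$. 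Apart from this bookkeeping the proof is pure counting of lattice points and the uniform remainder in Lemma \ref{IZLOGPCAL}.
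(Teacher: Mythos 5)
Your overall strategy—bound each weight by Lemma~\ref{IZLOGPCAL}, count $O(k^m)$ lattice points, and control the normalizing factor $\Pi_{h^k}(z,z)$—is exactly the paper's proof, and the first two steps are correct as written. The issue is your treatment of $\Pi_{h^k}(z,z)$. You assert that on a stratum $\mu_0^{-1}(F^o)$ the kernel grows like $a_0(F)k^{\dim F}$, and that at a fixed point it is merely $e^{O(1)}$; both claims are false. The Tian--Yau--Zelditch expansion \rfe{TYZ} is a \emph{uniform} statement on the compact manifold $M$: $\Pi_{h^k}(z,z)=a_0 k^m + a_1(z)k^{m-1}+\cdots$ for \emph{every} $z$, including points of $\dcal$ and the fixed points. (You can check this directly on $\CP^1$: at the fixed point $z=0$, only $s_0$ survives, yet $Q_{h^k}(0)\sim 1/k$ so $\Pi_{h^k}(0,0)\sim k$, consistent with $k^m$.) There is no dimension drop and hence no degeneration to worry about; the paper simply cites \rfe{TYZ} to conclude $\log\Pi_{h^k}(z,z)=O(\log k)$ uniformly in $z$, which contributes an $O(\log k /k)$ term and finishes the argument.

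Your ``workaround''---extracting a single term of size $e^{O(1)}$ from Lemma~\ref{IZLOGPCAL}---does not actually close the gap. The remainder in Lemma~\ref{IZLOGPCAL} is $O(\log k/k)$ (its proof invokes \rfe{QQa}), so a single term near $\mu_0(z)$ has size $e^{-kI^z(\alpha/k)+O(\log k)}$. Since $O(\log k)$ can be negative, this alone does \emph{not} give a lower bound $\ge c_0>0$; and bounding $kI^z(\alpha/k)$ for $|\alpha/k-\mu_0(z)|=O(1/k)$ requires control of the Hessian of $I^z$, which blows up near $\partial P$. So the sub-argument supplying the crucial lower bound on $\Pi_{h^k}(z,z)$ is not valid as stated, even though the conclusion it aims for is true. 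The repair is simply to drop the stratum-by-stratum reasoning and quote \rfe{TYZ}, which is already uniform over $M$.
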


\begin{proof} By definition,
$$\frac{1}{k} \log \mu_k^z(K) = \frac{1}{k} \log \sum_{\alpha \in k P: \frac{\alpha}{k} \in K}
\frac{\pcal_{h^k}(\alpha, z)}{\Pi_{h^k}(z,z)} . $$
 By Proposition \ref{FACE}, if $z \in \dcal$ then $\mu_k^z$ is
supported on the face of $\mu(z)$. Hence  for any $z$, Lemma
\ref{IZLOGPCAL} applies to all terms in the sum for $\mu_k^z$ and
termwise  we have
$$\pcal_{h^k}(\alpha, z) = e^{- k I^z(\frac{\alpha}{k})} e^{O(1)}, $$
where $O(1)$ is uniform in $z, \alpha$. Since $\log \Pi_{h^k}(z,z) = O(\log k)$ uniformly
in $z$, it  follows that
$$\frac{1}{k} \log \mu_k^z(K) = \frac{1}{k} \log \sum_{\alpha \in k P: \frac{\alpha}{k} \in K}
e^{- k I^z(\frac{\alpha}{k})}  + O(\frac{1}{k}) + O(\frac{\log k}{k}) $$
The Proposition then follows from the facts that
 $e^{- k I^z(\frac{\alpha}{k})}  \leq e^{- k \inf_{x \in K} I^z(x)}$ for every term in the sum,
and that  the number of terms in the sum is $O(k^m)$.
\end{proof}

\subsection{Lower bound}

Unfortunately, the lower bound in the large deviations principle
for open sets is not uniform in $z$. However, for Varadhan's
Lemma, we only need a special case. The following gives an example
of it, although we will need to generalize it later. For $z \in M,
x_0 \in P$, and $\epsilon
> 0$, put
\begin{equation} U( z, x_0, \epsilon) = \{x \in P: I^z(x)  <
I^z(x_0)  + \epsilon\}. \end{equation}

\begin{prop}\label{UNIFLB}  We have
\begin{equation} \label{LBPUNCHLINE}
\frac{1}{k} \log \mu_k^z(U(z, x_0, \epsilon) ) \geq - I^z(x_0) -
  \epsilon +  o(1),
\end{equation} where the remainder is uniform in $z$ and tends to
zero as $k \to \infty$.
\end{prop}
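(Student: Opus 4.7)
The plan is to bound $\mu_k^z(U(z,x_0,\epsilon))$ from below by the contribution of a single well-chosen lattice point. If $I^z(x_0) = +\infty$ the asserted inequality is vacuous, so we may assume $I^z(x_0) < \infty$; by Theorem \ref{LDINTRO} this forces $x_0 \in F$, where $F$ is the unique closed face of $P$ with $\mu_0(z)$ in its relative interior (with $F = P$ in the generic case $z \in M^o$). On $F$ the rate function takes the form $I^z(x) = u_F(x) + A^z(x)$ with $A^z$ affine-linear in $x$, and by the canonical formula (\ref{CANSYMPOT}) the symplectic potential $u_F$ extends continuously to $F$.

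Dropping all but one term in the defining sum for $\mu_k^z$, and invoking Lemma \ref{IZLOGPCAL} together with the uniform expansion (\ref{TYZ}) of $\Pi_{h^k}(z,z) = O(k^m)$, we obtain for any $\alpha_k \in kP \cap \Z^m$ with $\alpha_k/k \in F \cap U(z,x_0,\epsilon)$,
$$
\frac{1}{k}\log \mu_k^z(U(z,x_0,\epsilon)) \;\geq\; -I^z(\alpha_k/k) \;-\; \frac{m\log k}{k} \;+\; O(1/k),
$$
with remainder uniform in $(z,\alpha_k)$. The problem is thereby reduced to producing, for every sufficiently large $k$ and uniformly in $z$, a lattice point $\alpha_k \in kF \cap \Z^m$ satisfying $I^z(\alpha_k/k) \leq I^z(x_0) + \epsilon/2$.

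I would choose $\alpha_k$ to be any lattice point of $kF$ at distance at most $1/\sqrt{k}$ from $x_0$; such points exist for $k$ large because $F$ is a rational polytope, so $\frac{1}{k}(\Z^m \cap kF)$ is asymptotically $1/k$-dense in $F$. The continuity estimate then reads
$$
|I^z(\alpha_k/k) - I^z(x_0)| \;\leq\; \omega_{u_F}(1/\sqrt{k}) + |\nabla A^z|\cdot O(1/\sqrt{k}),
$$
where $\omega_{u_F}$ is the uniform modulus of continuity of $u_F$ on the compact set $F$.

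The main obstacle lies in controlling the affine slope $|\nabla A^z|$, which by Proposition \ref{LAMBDAZ*} is essentially $|\log |z'||$ in the $\T$-orbit coordinates along $F$ and blows up as $z$ approaches the relative boundary of its orbit. To obtain the required uniformity I would stratify $M$ by $\T$-orbit type: on any compact subset of a single stratum the slope $|\nabla A^z|$ is bounded, making the above estimate uniform with $o(1) = O(k^{-1/2})$; and for $z$ in a neighborhood of a smaller face $F' \subsetneq F$, Proposition \ref{FACE} forces $\mu_k^z$ to be supported on $kF'$, so the same single-term construction applies with $F$ replaced by $F'$. A finite induction over the face lattice of $P$ then patches these stratum-wise uniform bounds into a single uniform bound on all of $M$. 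This stratified patching is the technical heart of the uniform Laplace principle and constitutes the main substantive step of the proof.
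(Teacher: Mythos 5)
Your proposal correctly reduces to a single-term lower bound and correctly identifies the real difficulty: the affine slope $|\nabla A^z|\sim |\log|z'||$ is not bounded on $M$, so a raw modulus-of-continuity bound $|I^z(\alpha_k/k)-I^z(x_0)|\le \omega_{u_F}(k^{-1/2})+|\nabla A^z|\,O(k^{-1/2})$ is not uniform in $z$. But the stratification argument you propose to repair this does not close the gap. The slope $|\nabla A^z|$ blows up \emph{within} a stratum as $z$ approaches its boundary: for $z$ in the open orbit with $\mu_0(z)\to\partial P$, the point $z$ remains in $M^o$ while $|\log|z||\to\infty$, so compact exhaustions of $M^o$ never cover a full neighborhood of $\dcal$ in $M$. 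Your fallback -- invoking Proposition \ref{FACE} to restrict the support of $\mu_k^z$ to $k\bar{F}'$ for $z$ \emph{near} a smaller face $F'$ -- misreads that proposition: it applies only when $z\in\mu^{-1}(F')$, i.e.\ when $z$ lies \emph{on} the divisor, not when $z$ is in the open stratum close to it. For such $z$ the measure $\mu_k^z$ is still supported on all of $kP$, and the finite face-lattice induction never kicks in.

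The paper resolves this not by bounding $|\nabla A^z|$ but by controlling its \emph{sign} against the displacement $\alpha_k/k - x_0$. One chooses the lattice point $\alpha_k(z)$ with $|\alpha_k(z)/k - x_0|\le \sqrt m/k$ \emph{and} the additional orientation condition $\langle \alpha_k(z)/k - x_0, \log|z|\rangle \ge 0$ (resp.\ $\langle \alpha_k'(z)/k - x_0, \log|z'|\rangle \ge 0$ along a face $F$). With this choice the linear term in
$$
I^z(\alpha_k/k)-I^z(x_0) = \bigl(u_0(\alpha_k/k)-u_0(x_0)\bigr) - \bigl\langle \alpha_k/k - x_0, \log|z|\bigr\rangle
$$
is nonpositive and can simply be dropped, leaving only the uniformly continuous part $u_0(\alpha_k/k)-u_0(x_0)=o(1)$, independently of how large $|\log|z||$ is. That single sign condition is what delivers uniformity in $z$; some substitute for it is needed before your single-term bound can be made uniform.
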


\begin{proof} First, $U(z, x_0, \epsilon) \cap \frac{1}{k} \Z^m
\not= \emptyset$ and indeed,  there exists $\alpha_k(z) \in U(z,
x_0, \epsilon) \cap \frac{1}{k} \Z^m $ satisfying:
\begin{enumerate}

\item $|x_0 - \alpha_k(z)| \leq \frac{\sqrt{m}}{k}$;

\item When $z \in M^o,  \langle \alpha_k(z) - x_0, \log |z|
\rangle \geq 0. $ When $z \in \mu^{-1}(F)$ then $\langle
\alpha_k'(z) - x_0, \log |z'| \rangle \geq 0.$

\end{enumerate}

As in the upper bound,

\begin{equation}
\begin{array}{lll} \frac{1}{k} \log \mu_k^z(U(z, x_0, \epsilon) ) &
= & \frac{1}{k} \log \sum_{\alpha \in k P: I^z(\frac{\alpha}{k}) <
I^z(x_0) + \epsilon} \frac{\pcal_{h^k}(\alpha, z)}{\Pi_{h^k}(z,z)}
\\ && \\
& = & \frac{1}{k} \log \sum_{\alpha \in k P: I^z(\frac{\alpha}{k})
< I^z(x_0) + \epsilon} e^{- k I^z(\frac{\alpha}{k})}  +
O(\frac{1}{k}) + O(\frac{\log k}{k}) \\ && \\
& \geq & \frac{1}{k} \log \sum_{\alpha \in k P:
I^z(\frac{\alpha}{k}) < I^z(x_0) + \epsilon} e^{- k (I^z(x_0) +
\epsilon) } + O(\frac{1}{k}) + O(\frac{\log k}{k}) \\ && \\
& \geq & - I^z(x_0) + \epsilon + O(\frac{1}{k}) + O(\frac{\log
k}{k}).
\end{array} \end{equation}

In the last inequality, we used that
$$I^z(\alpha_k) - I^z(x_0) = - \langle \alpha_x - x_0, \log |z|^2
\rangle + u_0(x) - u_0(\alpha_k), $$ and used the continuity of
$u_0$ to see that $|I^z(\alpha_k) - I^z(x_0)| \leq o(1)$ as $k \to
\infty$.

\end{proof}

\subsection{Proof of Theorem \ref{C0}}

We now prove Theorem \ref{C0}. We employ the notation $F_t$ as in
(\ref{FT}). We derive it from the more general uniform Laplace
large deviations principle:  for each $h \in C_b(P)$,
$$ \lim_{k \to \infty} \sup_{z \in M} \left( \frac{1}{k}\log
\int_P e^{- k h(x)} d\mu_k^z (x) - F(z, h)\right) =  0, $$ where
$F(z, h)$ is defined in (\ref{F(z,h)}). Theorem \ref{C0} is the
special case with $h = - F_t$.

\begin{proof}

We begin with the uniform lower bound,  first proving a
generalization of Proposition \ref{UNIFLB}:

\begin{lem}\label{UNIFLBh}  Let $h \in C(P)$. Then,
\begin{equation} \label{LBPUNCHLINE}
\frac{1}{k} \log \sum_{\alpha \in k P: I^z(\frac{\alpha}{k}) +
h(\frac{\alpha}{k}) < I^z(x_0) + h(x_0) + \epsilon} e^{- k
h(\frac{\alpha}{k}) } \frac{\pcal_{h^k}(\alpha,
z)}{\Pi_{h^k}(z,z)} \geq - I^z(x_0) - h(x_0) + \epsilon + o(1),
\end{equation} where the remainder is uniform in $z$ and $x_0$ and tends to
zero as $k \to \infty$.
\end{lem}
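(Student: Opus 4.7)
The plan is to follow the strategy of Proposition \ref{UNIFLB} essentially verbatim, with the weight $e^{-kh(\alpha/k)}$ absorbed into the exponent, and to handle the continuous function $h$ by uniform continuity on the compact set $P$.

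First I would invoke Lemma \ref{IZLOGPCAL} to replace the ratio $\pcal_{h^k}(\alpha,z)/\Pi_{h^k}(z,z)$ by $e^{-kI^z(\alpha/k)}$ up to a multiplicative factor $e^{O(\log k)}$ that is uniform in $z$ and $\alpha$. (Recall that when $z\in\mu_0^{-1}(F)$, both $\pcal_{h^k}(\alpha,z)$ and $e^{-kI^z(\alpha/k)}$ vanish/equal $+\infty$ outside $k\overline{F}$, so the correspondence is consistent on the support of $\mu_k^z$.) After this replacement, the sum in \eqref{LBPUNCHLINE} becomes
\begin{equation*}
\frac{1}{k}\log\!\!\sum_{\substack{\alpha\in kP\\ I^z(\alpha/k)+h(\alpha/k)<I^z(x_0)+h(x_0)+\epsilon}}\!\! e^{-k[I^z(\alpha/k)+h(\alpha/k)]}+O\!\left(\tfrac{\log k}{k}\right),
\end{equation*}
with the error uniform in $z$.

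Next I would lower bound by keeping a single, carefully chosen term. Exactly as in the proof of Proposition \ref{UNIFLB}, I would pick a lattice point $\alpha_k(z,x_0)\in \tfrac{1}{k}\Z^m\cap \overline{F_z}$ (where $F_z$ is the face containing $\mu_0(z)$, or $F_z=P$ if $z\in M^o$) satisfying $|x_0-\alpha_k/k|\le \sqrt{m}/k$ together with the sign condition $\langle \alpha_k/k - x_0,\log|z'|\rangle\ge 0$ in the orbit coordinates appropriate to $F_z$. This sign condition kills the troublesome linear term in $I^z$ and leaves only $u_{F_z}(\alpha_k/k)-u_{F_z}(x_0)$, which is $o(1)$ as $k\to\infty$ by continuity of the symplectic potential on $\overline{F_z}$ (or uniform continuity on compact subsets of its domain). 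Similarly, $h(\alpha_k/k)-h(x_0)=o(1)$ uniformly in $x_0$ by uniform continuity of $h\in C(P)$ on the compact set $P$. Consequently $I^z(\alpha_k/k)+h(\alpha_k/k)\le I^z(x_0)+h(x_0)+o(1)$, so the chosen lattice point indeed satisfies the strict inequality defining the summation range once $k$ is large (uniformly), and it contributes at least
\begin{equation*}
\frac{1}{k}\log e^{-k[I^z(\alpha_k/k)+h(\alpha_k/k)]}\ \ge\ -I^z(x_0)-h(x_0)+o(1),
\end{equation*}
with remainder independent of $z$ and $x_0$. Adding the $O(\log k/k)$ error from the first step completes the proof.

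The one step to be careful with is the choice of $\alpha_k(z,x_0)$ when $x_0$ lies on $\partial P$ or on a proper face, because there $I^z$ can jump to $+\infty$ off the relevant face and the sign-condition argument must be carried out in orbit coordinates for that face. This is exactly the issue already handled in Proposition \ref{UNIFLB}, so the same construction applies, and the uniform continuity of $h$ on the compact polytope $P$ ensures that the additional term $h(\alpha_k/k)-h(x_0)$ contributes only an $o(1)$ error, uniformly in $(z,x_0)$.
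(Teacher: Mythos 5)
Your proof is correct and follows essentially the same strategy as the paper: apply Lemma \ref{IZLOGPCAL} to replace $\pcal_{h^k}(\alpha,z)/\Pi_{h^k}(z,z)$ by $e^{-kI^z(\alpha/k)}$ with uniform logarithmic error, then lower-bound the sum by exhibiting a lattice point $\alpha_k(z,x_0)$ near $x_0$ on the appropriate face satisfying the sign condition (exactly as in Proposition \ref{UNIFLB}) and invoking uniform continuity of $u_0$ (resp.\ $u_F$) and of $h$ on the compact polytope. The only cosmetic difference is that you retain a single well-chosen term and thereby avoid the explicit $\pm\epsilon$ slack in the displayed bound, whereas the paper's terse proof lower-bounds every term in the (implicitly nonempty) summation range; both hinge on the same existence argument for $\alpha_k$ carried over from Proposition \ref{UNIFLB}.
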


\begin{proof} With no loss of generality we may assume $h \geq 0$, i.e.
$e^{- k h(x)} \leq 1$; if not, we may replace $h$ by $h - \min
h$. Then the left side is

\begin{equation}
\begin{array}{l} = \frac{1}{k}  \log \sum_{\alpha \in k P: I^z(\frac{\alpha}{k}) +
h(\frac{\alpha}{k}) < I^z(x_0) + h(x_0) + \epsilon} e^{- k
h(\frac{\alpha}{k}) } e^{- k I^z(\frac{\alpha}{k})}  +
O(\frac{1}{k}) + O(\frac{\log k}{k}) \\  \\
\geq \frac{1}{k} \log \sum_{\alpha \in k P:  I^z(\frac{\alpha}{k})
+ h(\frac{\alpha}{k}) < I^z(x_0) + h(x_0) + \epsilon} e^{- k
(I^z(x_0) + h(x_0) +
\epsilon) } + O(\frac{1}{k}) + O(\frac{\log k}{k}) \\  \\
\geq - I_z(x_0) - h(x_0) + \epsilon + O(\frac{1}{k}) +
O(\frac{\log k}{k}).
\end{array} \end{equation}

\end{proof}

 Since $x_0, \epsilon$ are arbitrary, it
follows that
 \begin{equation}
\label{LBFT} \frac{1}{k} \log \int_P e^{- k h (x) } d\mu_k^z(x)
\geq \inf_{x \in P} \left(- h(x) - I^z(x) \right) + o(1), \;\;
\end{equation}
giving a uniform lower bound.

For the upper  bound, we use that
\begin{equation}
\begin{array}{lll} \frac{1}{k} \log \sum_{\alpha \in kP \cap \Z^m}
e^{k h(\frac{\alpha}{k})} \frac{\pcal_{h^k}(\alpha,
z)}{\Pi_{h^k}(z,z)} &  = & \frac{1}{k}  \log \sum_{\alpha \in k P}
e^{- k h(\frac{\alpha}{k}) } e^{- k I^z(\frac{\alpha}{k})}  +
O(\frac{1}{k}) + O(\frac{\log k}{k}) \\ &&   \\
& \leq & \max_{\alpha \in k P \cap \Z^m}  - \left(
h(\frac{\alpha}{k}) + I^z
(\frac{\alpha}{k}) \right) \\ && \\
& \leq & \max_{x \in P} - \left(h(x) + I^z(x) \right).
\end{array} \end{equation}

\end{proof}

This completes the proof of uniform Laplace large deviations
principle and also of Theorem \ref{C0}.

\section{\label{MM} Moment map $\mu_t$ and subdifferential of $u + t f$}

We  recall that the moment map $\mu = \mu_{\omega}$ of  a smooth
toric \kahler variety is defined by  $\mu_{\omega}(z) = (H_1,
\dots, H_m) \in P \subset \R^m$, with $d H_j = \omega
(\frac{\partial}{\partial \theta_j}, \cdot)$ where
$\{\frac{\partial}{\partial \theta_j}\}$ is a basis for $Lie(\T)$.
On the open orbit it is given by $\mu_{\omega}(e^{\rho/2}) =
\nabla_{\rho} \phi (\rho)$ where $\phi$ is the open orbit \kahler
potential. On a non-open orbit $\mu_{\omega}^{-1}(F)$ for some
face $F$, it is given by the analogous formula but with $\phi_F$
replacing $\phi$ and the $\rho''$ orbit coordinates of \S
\ref{BACKGROUND} replacing $\rho$. Thus, the moment map defines a
stratified Lagrangian torus fibration $\mu: M \to P$ which is a
fibering $M^o \to P^o$ away from the divisor at infinity and
boundary of $P$. In the real picture, if we divide by  $\T$, the
moment map on the quotient,  $\mu_{\omega}: M/\T \to P$ is a
diffeomorphism of manifolds with corners.

Our first purpose in this section is to prove that the interior
and face-wise  gradient maps are well-defined for the singular
potentials $\psi_t$, and  define a moment map $\mu_t$ for the
singular form $\omega_t$
 (cf. (\ref{MUPHI})).
We then study the regularity and mapping properties  of $\mu_t$.
We would like to generalize the homemorphism  property
$\mu_{\omega}: M/\T \to P$ to the singular \kahler forms
$\omega_t$. To do so, we observe that the diffeomorphism from
$M^o/\T \to P^o$ is the standard inverse relation between the
gradient map $\nabla \phip$ defined by the \kahler potential and
the inverse gradient map $\nabla u_0$ defined by its Legendre
transform, the symplectic potential. For the singular metrics
$\omega_t$, the moment maps $\mu_t$ is no longer a homeomorphism
of this kind. Indeed, the gradient  $\nabla (u_0 + t f)$ of its
symplectic potential  is not differentiable on its
codimension-one corner set
$$\ccal = \{x \in \bar{P}, \exists i \not=j : \lambda_i(x) =
\lambda_j(x) \}. $$ The derivative is discontinuous and its image
disconnects the complementary regions. We write $P \backslash
\ccal = \bigcup_{j = 1}^R P_j$, where $u_0 + t f = u_0 + t
(\langle \lambda_j, x \rangle + \eta_j ) $ and refer to $P_j$ as
the jth smooth chamber.

However, the inverse relation between $\mu_t$ and $\partial (u_0 +
t f)$ can be re-instated as a homeomorphism if we replace the
gradient map  $\nabla (u_0 + t f)$ by the set-valued
subdifferential $\partial (u_0 + t f)$ and $P$ by its  graph
$\gcal
\partial (u_0 + t f)$. This explains why $\psi_t$ can be $C^1$
although $u_0 + t f$ is not.

At the boundary $\partial P$, $\nabla u$ has logarithmic
singularities and hence the graph of $\nabla u$, hence of  $\nabla
(u_0 + t f)$, is not well-defined. This is because $\nabla u$ in
the smooth case must invert $\nabla \phi$ and send points of
$\partial P$ to $\dcal$. The polytope $P$ already compactifies
this picture, and this suggests that we replace the graph of the
subdifferential $ \partial (u_0 + t f)$ by the graph $\gcal
\partial (t f)$ of the subdifferential of the relative symplectic
potential, i.e. $\partial (t f)$ over $P$. It is of course
homeomorphic to the graph of $\nabla (u_0 + t f)$ away from
$\partial P$ and compactifies the graph on $\partial P$.

We therefore prove:

\begin{theo} \label{LIP} $\mu_t$ is Lipschitz continuous on $M$ for each $t \geq
0$. Moreover, $\mu_t$ has a natural lift  $\tilde{\mu}_t : M \to t
\gcal \partial f $ which is a homeomorphism (see Definition
\ref{LIFTEDI}).

\end{theo}

\subsection{Regularity of $\psi_t$ and definition of $\mu_t$ on orbits}

Using standard results of convex analysis, we can immediately
obtain the regularity of $\psi_t$ on the open orbit and along any
other  orbit.

Let us first recall the relation between the relative \kahler
potential $\psi_t$ and the open orbit absolute \kahler potential

 \begin{prop} \label{U+TFC1} For $t \geq 0$,
 \begin{enumerate}

 \item $\lcal (u_0 + t f) \in
 C^1(\R^m)$. Hence $\psi_t|_{M^o} \in C^1(M^o)$.

 \item For any face $F$ of $P$, $\lcal_F (u + t f) \in
 C^1(\R^{m-k})$. Hence $\psi_t|_{M_F} \in C^1(M_F)$.

 \end{enumerate}
 \end{prop}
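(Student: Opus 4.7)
The plan is to exploit classical convex analysis on $\R^m$: show that $u_0 + tf$, viewed as a convex function on $\R^m$ extended by $+\infty$ outside $P$, has everywhere differentiable Legendre transform $\lcal(u_0+tf) \in C^1(\R^m)$. From this, since on the open orbit $\psi_t$ is expressed (after identifying $z = e^{\rho/2+i\theta}$) as $\lcal_{\R^m}(u_0 + tf)(\rho) - \phip(\rho)$, and $\phip \in C^\infty(\R^m)$, the conclusion $\psi_t|_{M^o} \in C^1(M^o)$ follows immediately (both summands depend only on $\rho$, so $\T$-invariance is automatic). The face case (2) is entirely parallel, with $u_F$, $\phi_F$, $F$ and $\R^{m-k}$ replacing $u_0$, $\phip$, $P$ and $\R^m$ via the quotient orbit coordinates of \S\ref{BACKGROUNDDP}.

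First I would verify two complementary properties of $u_0 + tf$ on $P$. \textbf{(a)} \emph{Strict convexity on $P^o$}: since $\omega_0 > 0$, the open-orbit potential $\phip$ is smooth and strictly convex on $\R^m$, hence its Legendre dual $u_0 = \lcal \phip$ is strictly convex on $P^o$; adding the convex function $tf$ preserves strict convexity. \textbf{(b)} \emph{Boundary singularity}: the canonical description (\ref{CANSYMPOT}), $u_0 = \sum_k \ell_k \log \ell_k + f_0$ with $f_0 \in C^\infty(\bar P)$, shows $u_0$ extends continuously to $\bar P$, while the subdifferential of $u_0$ blows up logarithmically along each facet; since $tf$ is convex and Lipschitz on $\bar P$, $u_0 + tf$ retains both the continuous boundary extension and the logarithmic blow-up of its subdifferential at $\partial P$.

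Given (a) and (b), fix any $\rho \in \R^m$ and consider $x \mapsto \langle x, \rho\rangle - (u_0(x) + tf(x))$: it is upper semicontinuous on the compact set $\bar P$ (so attains its supremum), the logarithmic blow-up in (b) forces the maximizer to lie in $P^o$, and strict convexity in (a) makes it unique. By the subdifferential criterion for Legendre transforms (Rockafellar, \emph{Convex Analysis}, \S 25), this unique-maximizer property at every $\rho$ is equivalent to Gateaux differentiability of $\lcal(u_0 + tf)$ at every $\rho \in \R^m$, with gradient equal to the maximizer. A finite convex function on $\R^m$ that is differentiable at every point is automatically $C^1$ (continuity of the gradient is forced by convexity), and we conclude $\lcal(u_0 + tf) \in C^1(\R^m)$.

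The main obstacle is that $u_0 + tf$ is itself \emph{not} $C^1$ on $P^o$, since $f$ fails to be differentiable on the corner set $\ccal = \{\lambda_i = \lambda_j\}$. Consequently the clean duality ``essentially smooth $\leftrightarrow$ essentially strictly convex'' (Rockafellar, Thm.~26.5) cannot be applied directly to the primal, and one must argue via existence and uniqueness of maximizers as above, handling the set-valued subdifferential over $\ccal$. It is precisely this asymmetry — the Legendre transform smooths the corners of $f$ to $C^1$ but no further — that anticipates the $C^{1,1}$ (and not better) regularity of $\psi_t$ pinned down later in Theorem~\ref{GEO}.
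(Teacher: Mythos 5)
Your proof is correct, but your framing of the ``obstacle'' is slightly off, and as a result you take a longer route than the paper. The paper invokes Rockafellar's Theorem 26.3 directly: a closed proper convex function is essentially strictly convex \emph{if and only if} its Legendre conjugate is essentially smooth. That theorem does \emph{not} require the primal $u_0 + tf$ to be differentiable; it only requires it to be essentially strictly convex, which you yourself verify in step (a). So the fact that $f$ has corners on $\ccal$ is no obstruction to applying it, and the paper does exactly that, then concludes $C^1$-ness from Corollary 25.5.1 as you do. Your alternative — arguing by compactness plus the log-blow-up of $\nabla u_0$ to force a maximizer into $P^o$, uniqueness from strict convexity, and the subdifferential criterion to get Gateaux differentiability of the conjugate — is essentially a self-contained reproof of the one direction of Theorem 26.3 that is needed. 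This buys you a more concrete picture of where the maximizer lives (interior, unique, $= \nabla \lcal(u_0+tf)(\rho)$, which is exactly $\mu_t$), at the cost of a longer argument that the paper outsources to a standard reference. The step ``log-blow-up of $\partial u_0$ at $\partial P$ forces the maximizer interior'' deserves a word more justification (it amounts to $\mathrm{dom}\,\partial(u_0+tf) = P^o$, since $\partial u_0(x_0) = \emptyset$ for $x_0 \in \partial P$), but it is correct. Do drop or repair the remark blaming Thm.~26.5: the relevant statement is Thm.~26.3, and it applies directly.
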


\begin{proof} The first statement follows immediately from Theorem 26.3 of
 \cite{R}:  {\it A closed proper
convex function is essentially strictly convex if and only if its
Legendre conjugate is essentially smooth}. It suffices to recall
the definitions and to verify that $ u + tf$ is essentially
strictly convex on $P^o$.

A proper convex function $g$ is called {\it essentially strictly
convex} (see \cite{R}, page 253)  if $g$ is strictly convex on
every convex subset of $\mbox{dom} (\partial g) = \{x: \partial g
(x) \not= \emptyset\}.$  Here, $\partial g(x)$ is the
subdifferential of $g$ at $x$.  This property is satisfied by $g =
u + t f$ since $u$  is strictly convex on $\bar{P}$ and since $f$
is convex so that $u + tf $ is strictly convex for $t> 0$. It
follows that  $\lcal(u + t f)$ is essentially smooth.

We  recall that  $g$ is {\it essentially smooth} (\cite{R}, page
251) if its domain $\mbox{dom} (g)$ has non-empty interior $C$, if
$g$ is differentiable on all of $C$ and if $|\nabla g(x_j)| \to
\infty$ when $x_j \to \partial C$ (the last condition is vacuous
if $C = \R^m$.) Thus,  $\lcal(u + t f)$ is differentiable on
$\R^m$. To complete the proof, it suffices to recall that an
everywhere differentiable convex function is automatically $C^1$
(cf. \cite{R}, Corollary 25.5.1).

The same proof shows that the restrictions of $\psi_t$  to the
sub-toric varieties $M_F$ of $\psi_t$ are also $C^1$ along the
sub-toric varieties, proving the second statement.

\end{proof}

\begin{cor} The gradient maps

\begin{itemize}

\item $\nabla_{\rho} \phi_t (\rho): \R^m \to P^o$,

\item $\nabla_{\rho''} \phi_t|_{M_F} (\rho''): \R^{m - k} \to F$

\end{itemize}

are well defined and continuous.
\end{cor}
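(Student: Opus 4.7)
The plan is to derive the Corollary directly from Proposition \ref{U+TFC1} together with the explicit formula for $\psi_t$ in Theorem \ref{GEO} and a standard convex-duality argument. On the open orbit, the absolute \kahler potential for $\omega_t$ is $\phi_t = \phip + \psi_t$, which by Theorem \ref{GEO} equals $\lcal_{\R^m}(u_0 + tf)$ viewed as a $\T$-invariant function of the variable $\rho$ alone. Proposition \ref{U+TFC1}(1) asserts that this Legendre transform lies in $C^1(\R^m)$, so $\nabla_\rho \phi_t(\rho)$ is a well-defined continuous map $\R^m \to \R^m$. This already takes care of well-definedness and continuity, and what remains is to identify the image as $P^o$.

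For the image I would invoke the standard Legendre inversion: since $u_0 + tf$ is essentially strictly convex on $P$, for each $\rho \in \R^m$ the point $\nabla_\rho \phi_t(\rho)$ is the unique maximizer in $P$ of $x \mapsto \langle \rho, x\rangle - u_0(x) - tf(x)$. The canonical symplectic potential carries the logarithmic singularities $u_0(x) = \sum_k \ell_k(x)\log \ell_k(x) + f_0(x)$ along $\partial P$ recorded in (\ref{CANSYMPOT}), so the inward-normal derivative of $u_0$ blows up to $-\infty$ at every facet. Consequently the maximum can never be attained on $\partial P$: from any boundary candidate one strictly increases the objective by moving a short distance into $P^o$. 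This forces $\nabla_\rho \phi_t(\rho) \in P^o$ for every $\rho$.

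The argument on a sub-toric variety $M_F$ is completely parallel. Working in the orbit coordinates $\rho''$ for $M_F$, one has $\phi_t|_{M_F} = \phi_F + \psi_t|_{M_F} = \lcal_{F^o}(u_F + tf)$; Proposition \ref{U+TFC1}(2) provides $C^1$ regularity, and the same convex-duality calculation, now using the logarithmic singularities of $u_F$ along $\partial F$ recalled in \S \ref{BACKGROUNDDP}, yields continuity of $\nabla_{\rho''}\phi_t|_{M_F}$ and image in the relative interior $F^o$. The only subtle point I anticipate is this boundary assertion: well-definedness and continuity are immediate from Proposition \ref{U+TFC1}, but excluding boundary maximizers requires the explicit singular structure of $u_0$ (respectively $u_F$).
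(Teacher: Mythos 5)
Your proof is correct and follows essentially the same route the paper intends: the paper presents the Corollary without proof as an immediate consequence of Proposition~\ref{U+TFC1}, whose proof already establishes that $\lcal(u_0+tf)$ is essentially smooth (hence $C^1$ on $\R^m$). You helpfully make explicit the one detail the paper leaves implicit, namely that the image lies in the interior of the polytope: since the log singularities of $u_0$ in (\ref{CANSYMPOT}) force $\partial(u_0+tf)(x)=\emptyset$ for $x\in\partial P$, the unique maximizer in the Fenchel duality $\nabla\lcal(u_0+tf)(\rho)=\arg\max_{x\in\bar P}(\langle\rho,x\rangle-u_0(x)-tf(x))$ must be interior. (Your observation that the image on a face should be $F^o$ rather than $F$ is a fair correction to the Corollary's statement; the same reasoning applies via $u_F$.)
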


The  Corollary serves to define the moment map $\mu_t$:
\begin{equation} \label{MUTDEF} \mu_t(z) = \left\{ \begin{array}{ll} \nabla_{\rho}
(\psi_t+ \phip)
(\rho), & z = e^{\rho/2 + i \theta} \in M_0; \\ & \\
\nabla_{\rho''} (\psi_t + \phip) (\rho''), & z = e^{\rho''/2 + i
\theta''} \in M_F. \end{array} \right. \end{equation}

\subsection{\label{MMSUBDIF}Moment map $\mu_t$ and
subdifferential map}

So far, we have shown that $\mu_t$ as defined by (\ref{MUTDEF}) is
continuous in $(t, z)$ on the interior and along each boundary
face. To complete the study of $\mu_t$ we need to prove the
homeomorphism properties and to  analyze continuity across
$\dcal$. We will need to  recall some further definitions in
convex analysis on $\R^m$, following \cite{R,HUL}. First, a convex
function $g$ is called {\it proper} if $g(x) < \infty$ for at
least one $x$ and $g(x) > - \infty$ for all $x$ (cf. \cite{R},
page 24).  When $g$ is a proper convex function, it is {\it
closed} if it is lower semi-continuous (\cite{R}, page 54). These
conditions are trivially satisfied for $g = u + t f$. A
subgradient of $g$ at $x \in \R^m$ is a vector $x^* \in \R^m$ such
that
$$g(y) - g(x) \geq \langle x^*, y - x \rangle, \;\; \forall y, $$
i.e. if $g(x) + \langle x^*, y - x \rangle$ is a supporting
hyperplane to the epi-graph  $\mbox{epi} (g)$ at $(x, g(x)).$ The
subdifferential of $g$ at point $x_0$ is the set of subgradients
at $x$, i.e.
$$\partial g(x_0) = \{\rho: g(y) - g(x) \geq \langle \rho, y - x \rangle, \;\; \forall y\}, $$
 A convex function $g$ is
differentiable at $x$  if and only if $\partial g(x)$  is a single
vector (hence $\nabla g(x)$).

The graph of the subdifferential of $g$  is the set
\begin{equation} \label{GCAL} \gcal (\partial g) = \{(x, \rho): \rho \in \partial
g(x)\} \subset T^* \R^m. \end{equation} In the smooth case, it is
a Lagrangian submanifold. In the cornered case it is a Lagrangian
submanifold with corners. An illustration in the one-dimensional
case may be found \cite{HUL}I (p. 23). As the illustration shows,
the derivative has a jump at each corner point $x$, and the graph
fills in the jump at $x$ with
 a vertical   interval $[D_- f(x), D_+ f(x)]$ where $D_{\mp} f(x) $
 are the left/right derivatives.

\subsection{Legendre transforms,  gradient maps and Lagrangian graphs}

We recall that the Legendre transform of a convex function $g$ on
$\R^m$  is defined by $\lcal g (x) = g^*(x) = \sup_{\rho \in \R^m}
(\langle x, \rho \rangle - g(x) ). $

 When the gradient map $\nabla
g$ is invertible, its inverse is the gradient map $\nabla g^*$.
Another way to put this is that the graph $gr \nabla = (x, \nabla
g(x))$ of $\nabla g$ is a Lagrangian submanifold  $\Lambda \subset
T^*\R^m = \R^{2m}_{x, \xi}$ which projects without singularities
to the base $\R^m_x$. One says that $g$ parameterize $\Lambda$.
Open sets of $\Lambda$ which project without singularities to the
fiber $\R^m_{\xi}$ can be parameterized as graphs $(\nabla
g^*(\xi), \xi)$ of $\nabla g^*: \R^m_{\xi} \to T^*\R^m$. We put
$\iota (x, \xi) = (\xi, x)$ so that $\Lambda$ is (locally) given
as $\iota \gcal \nabla g^*$. One then says that $g^*$
parameterizes $\Lambda$ (locally). Applying $\iota$  is equivalent
to the statement that $\nabla g, \nabla g^*$ are inverse maps.

When $g$ is not differentiable, one can replace the graphs of
$\nabla g$ and $\nabla g^*$ by the sub-differentials $\partial g,
\partial g^*$. The graph of $\partial g$ is a piecewise smooth
Lagrangian manifold $\Lambda$  with corners over the non-smooth
points of $g$. $\Lambda$ is also the graph of $\iota \partial g^*$
over the subset of $\R^m_{\xi}$  where it projects. As a simple
example consider $g(x) = |x|$:  The  graph of $\partial g$
consists of the graph of $\xi = -1$ on $\R_-$ together with the
vertical segment $x = 0, \xi \in (-1, 1)$ together with the graph
of $\xi = 1$ on $\R_+$. We see that the graph projects to the
interval $[-1, 1]  \subset \R_{\xi}$. Hence the domain of $\lcal
|x|$ is $[-1,1]$.  Further, $\lcal |x| = 0$ on $[-1, 1]$ and $\pm
\infty$ for $\xi
> 1,$ resp. $\xi < -1$. Hence, $(\partial \lcal |x|) (\xi) = 0$
for $\xi \in (-1, 1)$ and it is the half-line $x > 1$ at $\xi =
1$, the half line $x < -1$ at $\xi = -1$ and it is undefined
elsewhere. We see that $\iota \gcal \partial \lcal |x| = \gcal
\partial |x|$ and hence in a generalized sense the set-valued maps
$\partial |x|$ and $\partial \lcal |x|$ are inverses.

\subsection{The moment map and subdifferential of $u + t f$ on the interior}

We now return to  the moment map $\mu_t$, where
 the convex function of interest $\psi_t$ (cf. (\ref{PHIT})).

When $t = 0$, the moment map $\mu_0 =   \nabla \phip(\rho) $ is a
homeomorphism from $M^o_{\R} \to P^o$ which is inverted  by $ \exp
\frac{1}{2} d u_{0}$ (cf. (\ref{GRADULOGMU})).  In the non-smooth
case, the
 gradient map is undefined at the corner set. We then
  replace the gradient at the corner by  the subdifferential $\partial
(u_0 + t f)$
 \cite{R,HUL}. The following Proposition identifies this set.
 Given $x$, we denote by $J_x = \{j: \lambda_j(x) =
 \max\{\lambda_1(x), \dots, \lambda_r(x) \}$.

 \begin{prop} \label{CH} Let $f = \max\{\lambda_1, \dots, \lambda_r \}$ be a piecewise affine function on
 $P$. Then for $x \in P^o$,  $$\partial (u_0 + t f)(x) = \nabla u(x)
 + t CH \{\nu_j: j\in J_x\}. $$
 \end{prop}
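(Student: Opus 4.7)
The plan is to recognize the statement as a direct application of two standard rules from convex analysis: the sum rule for subdifferentials, and the formula for the subdifferential of a pointwise maximum of smooth convex functions. Both rules are classical (e.g.\ Rockafellar \cite{R}, Theorems 23.8 and 23.9, or Hiriart-Urruty--Lemar\'echal \cite{HUL}), so the proof is a matter of verifying the hypotheses under which they apply and then combining them.

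First I would observe that on $P^o$ the function $u_0$ is smooth (as reviewed in \S\ref{BACKGROUND}, in particular \eqref{CANSYMPOT} shows that $u_0 = f_0 + \sum_k \ell_k \log \ell_k$ with $f_0 \in C^\infty(\bar{P})$ and each $\ell_k > 0$ on $P^o$). Thus $\partial u_0(x) = \{\nabla u_0(x)\}$ for $x \in P^o$. Since $u_0$ is finite and continuous on a neighborhood of any $x \in P^o$, and $tf$ is finite and continuous on all of $\R^m$, the sum rule for subdifferentials applies without qualification, giving
\begin{equation*}
\partial(u_0 + tf)(x) \;=\; \partial u_0(x) + t\,\partial f(x) \;=\; \nabla u_0(x) + t\,\partial f(x)
\end{equation*}
for all $x \in P^o$.

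Next I would compute $\partial f(x)$ where $f = \max\{\lambda_1, \dots, \lambda_r\}$ is a max of affine functions $\lambda_j(y) = \langle \nu_j, y\rangle + v_j$. This is the textbook ``max rule'': for a pointwise maximum of finitely many convex functions that are continuous at $x$, the subdifferential at $x$ equals the convex hull of the subdifferentials of those functions which are \emph{active} at $x$. Since each $\lambda_j$ is affine with $\partial \lambda_j(x) = \{\nu_j\}$, and since the active set at $x$ is precisely $J_x$, this yields
\begin{equation*}
\partial f(x) \;=\; CH\{\nu_j : j \in J_x\}.
\end{equation*}
A quick self-contained justification, in case one does not want to cite the max rule wholesale: the $\supseteq$ direction follows from the fact that each $\nu_j$ with $j \in J_x$ is a subgradient of $f$ at $x$ (from $f(y) \geq \lambda_j(y) = \lambda_j(x) + \langle \nu_j, y-x\rangle = f(x) + \langle \nu_j, y-x\rangle$), and subdifferentials are always closed and convex so they contain the convex hull; the $\subseteq$ direction follows from a separating hyperplane argument applied to the Legendre conjugate of $f$, or equivalently by using that $f$ is finite-valued so $\partial f(x)$ is compact and its support function is the directional derivative $f'(x;\cdot) = \max_{j \in J_x} \langle \nu_j, \cdot\rangle$.

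Substituting, we obtain
\begin{equation*}
\partial(u_0 + tf)(x) \;=\; \nabla u_0(x) + t\,CH\{\nu_j : j \in J_x\},
\end{equation*}
which is the stated formula. The only genuine subtlety is the interior assumption $x \in P^o$: this ensures $u_0$ is smooth at $x$ so that $\partial u_0(x)$ is a singleton and the sum rule requires no qualifications such as disjoint relative interiors of effective domains. On $\partial P$ the symplectic potential $u_0$ is no longer smooth and one must instead work with the relative symplectic potentials $u_F$ along faces, which is the reason the formula is stated for interior points only.
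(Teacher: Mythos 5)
Your proof is correct and follows essentially the same route as the paper: apply the sum rule to reduce to $\partial f(x)$, then apply the standard max rule for the subdifferential of a finite maximum of convex (here affine) functions, citing the relevant convex analysis references. The paper's proof is just a more compressed version of the same two-step argument (citing \cite{IT} for the max rule), so your extra detail on hypotheses and the brief self-contained justification are a reasonable elaboration rather than a different approach.
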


\begin{proof} Clearly, $\partial (u_0 + t f)(x) = \nabla u_0(x) + t
\partial f(x)$, so it suffices to show that $\partial f(x) = CH \{\lambda_j: j\in J_x\}. $
But it is well known that the subdifferential of the maximum $g =
\max_j g_j$ of $r$  convex functions is the convex hull of the
gradients of those $g_j$ such that $g(x) = g_j(x)$ (see e.g.
\cite{IT} for much more general results).
\end{proof}

\begin{prop} The graph of the subdifferential,  $\gcal \partial (u_0 + t f)(x) \subset T^* P^o$,  is a piecewise smooth
Lagrangian submanifold with corners. The graph of the
subdifferential of  the relative symplectic, $\gcal
\partial (t f)(x) \subset T^* P$, is a piecewise linear
Lagrangian submanifold with corners over $P$.  \end{prop}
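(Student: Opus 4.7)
The plan is to stratify $P$ according to the piecewise-linear structure of $f$ and assemble the two graphs piece by piece over each stratum, then verify the Lagrangian condition stratum-wise. Given an index set $I \subset \{1,\ldots,r\}$, let
\[
W_I = \{x \in P : \lambda_j(x) = f(x) \text{ for all } j \in I\}.
\]
Each $W_I$ is the intersection of $P$ with an affine subspace of codimension $|I|-1$, and the $W_I$ (as $I$ ranges over the nonempty subsets with $W_I \neq \emptyset$) give a finite polyhedral decomposition of $P$, whose top-dimensional strata are the smooth chambers $P_j$ defined in the excerpt.

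On the interior $\operatorname{relint}(P_j) \subset P^o$, the convex function $u_0 + tf$ coincides with the smooth strictly convex function $u_0 + t\lambda_j$. Hence $\partial(u_0+tf)(x) = \{\nabla u_0(x) + t\nu_j\}$ is single-valued and smooth, and the piece of $\gcal\partial(u_0+tf)$ over $\operatorname{relint}(P_j)$ is exactly the graph of $d(u_0 + t\lambda_j)$, which is a smooth Lagrangian section of $T^*P^o$ by the standard fact that the graph of an exact one-form is Lagrangian. The analogous piece of $\gcal\partial(tf)$ is the horizontal affine slice $\operatorname{relint}(P_j) \times \{t\nu_j\}$, visibly a piecewise linear Lagrangian section.

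Over the relative interior of $W_I$ with $|I| = \ell + 1$, Proposition \ref{CH} gives
\[
\partial(u_0+tf)(x) = \nabla u_0(x) + t\,CH\{\nu_j : j \in I\},
\]
so the piece of the graph is an $(m-\ell)$-dimensional base times an $\ell$-dimensional convex hull, total dimension $m$. To check it is isotropic, parametrize locally by base coordinates $x' = (x_1,\ldots,x_{m-\ell})$ on $W_I$ and barycentric coordinates $(s_j)_{j \in I}$, so the fiber component is $\xi = \nabla u_0(x) + t\sum_j s_j \nu_j$. Pulling back the canonical form $\sum d\xi_i \wedge dx_i$ gives two contributions: the symmetric Hessian term $\sum \partial_i\partial_k u_0\, dx_k \wedge dx_i$, which vanishes; and the cross term $t\sum_{j\in I} \langle\nu_j, dx\rangle \wedge ds_j$. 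The crucial observation is that on $W_I$ all the defining equations $\langle \nu_j - \nu_k, x\rangle = v_k - v_j$ ($j,k \in I$) hold, so on the tangent space to $W_I$ the forms $\langle \nu_j, dx\rangle$ are all equal to a common one-form $\alpha$; hence the cross term equals $t\alpha \wedge \bigl(\sum_j ds_j\bigr) = 0$ because $\sum_j s_j = 1$. This isotropy computation, together with the dimension count, gives Lagrangianity over each stratum. For $\gcal\partial(tf)$ the Hessian term is absent, and the base piece is just an affine subspace of $P$ fibered over a convex hull, yielding a piecewise linear Lagrangian slab.

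It remains to glue. The key step here is purely combinatorial: at any $x \in P$, the subdifferential is the convex polytope $\nabla u_0(x) + t\cdot CH\{\nu_j : j \in J_x\}$ (respectively $t\cdot CH\{\nu_j : j \in J_x\}$), whose face structure matches the incidence of strata $W_I$ containing $x$, so the local stratified pieces assemble into a topological manifold with corners embedded in $T^*P^o$ (respectively $T^*P$). The second assertion includes the boundary of $P$, which is handled precisely because $f$ (and hence $tf$) remains finite and piecewise linear on all of $\bar P$, whereas $u_0 + tf$ blows up there; this is why the relative version extends over the compact $P$ while the absolute one is only defined over $P^o$. The main obstacle in writing this out carefully is the cross-term cancellation on $W_I$, but as noted this reduces to the affine identity $\sum_j ds_j = 0$ on barycentric coordinates together with the defining equations of $W_I$, so the calculation goes through uniformly in the codimension $\ell$.
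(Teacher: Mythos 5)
Your proof is correct and follows the same overall stratification as the paper's, but the crucial step --- verifying that the pulled-back canonical two-form vanishes over each stratum --- is carried out very differently, and your version is actually the more complete one. The paper's proof reduces, after an affine change of variable, to the local model $f = \max\{x_1,\ldots,x_p\}$ and then simply \emph{asserts} that ``the canonical symplectic form of $T^*M$ vanishes on all smooth faces and on all vectors tangent to the corners''; no computation is offered. You instead work intrinsically on each stratum $W_I$ and compute the pullback of $\sum_i d\xi_i \wedge dx_i$ directly in a parametrization by $(x,s)$ with $x$ in $W_I$ and $s$ a barycentric coordinate on the simplex $CH\{\nu_j : j \in I\}$. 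The two cancellations you identify --- the Hessian term dying by symmetry of $\nabla^2 u_0$, and the cross term $t\sum_{j\in I}\langle\nu_j,dx\rangle\wedge ds_j$ dying because the restrictions of the $\langle\nu_j,dx\rangle$ to $T W_I$ coincide while $\sum_j ds_j = 0$ --- are exactly the content the paper's assertion is implicitly invoking, and spelling them out makes the argument self-contained and coordinate-free. One small caveat worth flagging for full generality: your dimension count (codimension $|I|-1$ for $W_I$, dimension $|I|-1$ for the simplex) tacitly assumes the $\nu_j$, $j\in I$, are affinely independent whenever $W_I \neq \emptyset$; if they are not, both the codimension of $W_I$ and the dimension of the simplex drop by the same amount, so the total dimension is still $m$, but the phrasing should be adjusted. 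The paper's reduction to the model case sidesteps this by choosing coordinates, but in doing so obscures what is actually being verified; your intrinsic computation is preferable and could replace the paper's sketch.
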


\begin{proof}  The term $\nabla u_0$ is clearly
irrelevant to the first statement and hence the second statement
implies the first.

The statement is local and can be checked in the model examples
where the affine functions are the coordinates $x_1, \dots, x_p$
of $ \R^{m}$. The corner set consists of the large diagonal
hyperplanes $x_{1} = \cdots = x_{p} = 0$ for some $p \leq m$
and distinct indices among $1, \dots, m$. On the complement of the
large diagonal hyperplanes, the subdifferential is a constant
vector $e_j$ for some $j$. Over any point $x$  of the diagonal
hyperplanes $x_i = x_j$,  the subdifferential contains the
one-dimensional convex hull $CH(e_i, e_j)$. This convex hull over
the hyperplane is an $m$ dimensional linear manifold bridging  the
two constant graphs and the union is a piecewise linear
Lagrangian submanifold $\Lambda_1$. Over higher $s$-codimension
intersections,  the subdifferential contains $s$ such
one-dimensional convex sets and  equals their convex hull. It
follows that the  full subdifferential is piecewise linear of
dimension $m$. The canonical symplectic form of $T^*M$ vanishes on
all smooth faces and on all vectors tangent to the corners.
\end{proof}

 Although $\partial (u + t f)$ is multi-valued,
$\partial \phi_t (\rho)$ is a singleton for all $\rho$:
\begin{prop}\label{ONEONE}  For each $\rho \in \R^m$, there exists precisely one $x \in P^o$
so that  $(x, \rho) \in \gcal (\partial(u_0 + t f))$. \end{prop}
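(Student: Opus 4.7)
The plan is to reduce the question to Fenchel--Legendre duality between $g := u_0 + tf$ and $g^* := \lcal(u_0 + tf)$, exploiting the $C^1$ regularity of $g^*$ already established in Proposition~\ref{U+TFC1}.

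By Theorem~\ref{GEO}, on the open orbit (with $z = e^{\rho/2+i\theta}$) one has the identity $\psi_t(z) + \phip(\rho) = \lcal(u_0 + tf)(\rho)$, and by Proposition~\ref{U+TFC1} the right-hand side is $C^1$ in $\rho \in \R^m$. Since $u_0 + tf$, extended by $+\infty$ outside $P$, is a proper closed convex function on $\R^m$, the Fenchel--Moreau biconjugate theorem gives the subgradient inversion
\[
\rho \in \partial (u_0 + tf)(x) \iff x \in \partial \lcal(u_0 + tf)(\rho).
\]
Differentiability of $\lcal(u_0+tf)$ at every $\rho$ collapses the right-hand subdifferential to the singleton $\{\nabla(\psi_t+\phip)(\rho)\}$, yielding at once existence and uniqueness of an $x \in P$ with $(x, \rho) \in \gcal\,\partial(u_0+tf)$, namely $x = \nabla(\psi_t+\phip)(\rho)$.

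The only remaining point is to verify that this unique $x$ in fact lies in the open interior $P^o$ rather than somewhere on $\partial P$. For this I would use the logarithmic singularity of $u_0$ recorded in (\ref{CANSYMPOT}): writing $u_0 = \sum_k \ell_k \log \ell_k + f_0$ with $f_0 \in C^\infty(\bar P)$, a one-sided expansion along an inward normal $v$ to a facet at a boundary point $x_0 \in \partial P$ yields $u_0(x_0 + sv) - u_0(x_0) \sim c\, s\log s$ as $s \to 0^+$. A putative subgradient $\rho \in \partial u_0(x_0)$ would have to satisfy $\langle \rho, v\rangle \leq \log s + O(1)$ for every small $s > 0$, which is impossible; hence $\partial u_0(x_0) = \emptyset$. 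The Moreau--Rockafellar sum rule (applicable since $tf$ is finite and continuous on $\R^m$ while $u_0$ is continuous on $P^o$, so their effective domains overlap in relative interior) then gives $\partial(u_0+tf)(x_0) = \partial u_0(x_0) + t\,\partial f(x_0) = \emptyset$. Thus the unique $x$ produced above must sit in $P^o$.

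I expect the boundary emptiness step to be the only real obstacle: the Legendre inversion and the singleton collapse are immediate once $\lcal(u_0+tf) \in C^1(\R^m)$ is granted, but ruling out boundary $x$ requires the small explicit estimate above on $u_0$ together with an invocation of the Moreau--Rockafellar sum rule. Everything else is a direct quotation of results already appearing in the excerpt.
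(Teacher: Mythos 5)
Your proof is correct and tracks the paper's opening observation that the proposition is ``actually equivalent to Proposition~\ref{U+TFC1}'': you make that equivalence explicit through the Fenchel subgradient inversion $\rho \in \partial g(x) \iff x \in \partial g^*(\rho)$ (Rockafellar, Thm.~23.5), then collapse $\partial g^*(\rho)$ to a singleton using the established $C^1$ regularity of $\lcal(u_0+tf)$. Where the routes diverge: the paper then gives a \emph{second}, self-contained argument for uniqueness, verifying directly that $\partial(u_0+tf)(x_1) \cap \partial(u_0+tf)(x_2) = \emptyset$ for $x_1 \neq x_2$ by a supporting-hyperplane contradiction with the strict convexity of $u_0 + tf$; you omit this and rely entirely on Proposition~\ref{U+TFC1} as a black box. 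Conversely, your explicit check that the unique $x$ lands in $P^o$ rather than on $\partial P$ --- the $s\log s$ directional estimate showing $\partial u_0(x_0) = \emptyset$ at any $x_0 \in \partial P$, combined with the Moreau--Rockafellar sum rule --- is a genuine addition: the paper's proof does not address the boundary at all, leaving this to be read off implicitly from the essential smoothness of $u_0 + tf$. Both proofs are sound; yours is a bit more economical but less self-contained, and it fills a small gap the paper leaves implicit.
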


\begin{proof} This is actually equivalent to Proposition
\ref{U+TFC1} and is the key step in the proof we quoted from
Theorem 26.3 of \cite{R}: a convex function $f$ is differentiable
at $x$ if and only if $\partial f$ consists of just one vector
(i.e. $\nabla f(x)$.)

Let us verify that   $\partial (u_0 + tf) (x_1) \cap
\partial (u_0 + tf) (x_2) = \emptyset$  when $x_1 \not= x_2$.
 We argue by contradiction: suppose
that $x^* \in \partial (u_0 + tf) (x_1) \cap \partial (u_0 + tf)
(x_2)$. Then the graph of $\langle x^*, z \rangle - (u_0 +
tf)^*(x^*)$ is a non-vertical supporting hyperplane to $\mbox{epi}
(u_0 + tf)$ containing $(x_1, (u_0 + tf)(x_1))$ and $(x_2, (u_0 +
tf)(x_2))$. But then the line segment joining these points lies in
the hyperplane, so $(u_0 + tf)$ cannot be strictly convex along
the line segment joining $x_1$ and $x_2$.

\end{proof}

\begin{cor}\label{PHITNOTC1} If $\psi_t$ is associated to a non-trivial
test configuration, then $\psi_t \notin C^2(M)$. \end{cor}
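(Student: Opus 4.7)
The plan is to use the explicit formulas in Theorem \ref{GEO} to find a directional second derivative of $\psi_t$ that jumps discontinuously across the boundary between a chamber region $\mu_t^{-1}(P_j^\circ)$ and an adjacent collapsed region of $\omega_t$.

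Non-triviality of $T$ means $f=\max\{\lambda_1,\dots,\lambda_p\}$ is non-affine, so there exist indices $j\neq k$ with $\nu_j\neq \nu_k$ whose chambers $P_j,P_k$ share a common codimension-one face $G\subset \bar P$. Set $w:=\nu_j-\nu_k\neq 0$, fix $x_0$ in the relative interior of $G$, and work on the open orbit. Put $\tilde\psi_t(\rho):=\psi_t(e^{\rho/2+i\theta})+\phip(\rho)$, so that the real $\rho$-Hessian of $\tilde\psi_t$ controls $\omega_t$ on $M^o$.

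By Theorem \ref{GEO}, on $\mu_t^{-1}(P_j^\circ)$ one has $\tilde\psi_t(\rho)=\phip(\rho+t\nu_j)-tv_j$ and $\mu_t(\rho)=\mu_0(\rho+t\nu_j)$. Thus $\Hess_\rho\tilde\psi_t$ is a translate of the strictly positive-definite form $\Hess\phip$, whence $\partial_w^2\tilde\psi_t(\rho)=w^{\!\top}\Hess\phip(\rho+t\nu_j)\,w>0$ throughout $\mu_t^{-1}(P_j^\circ)$. On the collapsed region over $x_0$, the same theorem yields $\tilde\psi_t(\rho)=\langle\rho,x_0\rangle-u_0(x_0)-t(\langle\nu_j,x_0\rangle+v_j)$, which is affine in $\rho$; moreover the fibre $\mu_t^{-1}(x_0)$ is the line segment joining $\rho_0:=\nabla u_0(x_0)-t\nu_j$ to $\nabla u_0(x_0)-t\nu_k$, whose tangent direction is a non-zero multiple of $w$. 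Hence $\partial_w^2\tilde\psi_t\equiv 0$ throughout the collapsed region.

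Finally, the point $\rho_0$ lies in the closure of both open regions: it is the limit of $\rho=\nabla u_0(x)-t\nu_j$ as $x\to x_0$ from $P_j^\circ$, and it is one endpoint of the collapsed fibre over $x_0$. The two real-analytic formulas extend to $\rho_0$ and give the incompatible one-sided limits $w^{\!\top}\Hess\phip(\nabla u_0(x_0))\,w>0$ from the $P_j$-side and $0$ from the collapsed side. Therefore $\partial_w^2\tilde\psi_t$ is discontinuous at $\rho_0$, so $\psi_t\notin C^2(M)$. The only delicate point is verifying that $\rho_0$ is genuinely an accumulation point of both open regions in $\R^m$, which follows from the subdifferential description of $\mu_t^{-1}$ given in Proposition \ref{CH} combined with strict convexity of $u_0$.
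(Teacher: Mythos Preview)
Your argument is correct and is essentially the same as the paper's: the paper's proof simply observes that the $\rho$-Hessian of $\psi_t+\phip$ has a kernel at every point lying over the corner set (since $\tilde\psi_t$ is affine there) but is strictly positive definite in the smooth chambers (being a translate of $\Hess\phip$), and hence cannot be continuous across the interface. You have made this explicit by naming the degenerate direction $w=\nu_j-\nu_k$ and a specific interface point $\rho_0$.
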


\begin{proof} Indeed, $\psi_t |_{M^o} \notin C^2(M^o)$ since
$\nabla^2 \psi_t$ has a kernel at each point  in the image of the
subdifferential of the corner set but it is strictly positive
definite in the smooth regions.

\end{proof}

\begin{defin} \label{LIFTEDI} The  lifted interior  moment map is the
composite map
\begin{equation} \label{LMM} \begin{array}{lllll}\tilde{\mu}_t(\rho) = (\mu_t(\rho), \rho) :
 M^o &\to & \gcal (\partial(u_0 + tf)) \subset T^* P^o \\ &&\\
&& \downarrow  \\ &&\\  && \gcal (\partial( tf)) \subset T^* P^o .
\end{array} \end{equation}
The downwards arrow is the map $(\mu_t(\rho), \rho) \to
(\mu_t(\rho), \rho - \nabla u_0 (\mu_t(\rho))$.  We define
analogous maps along other orbits where  we replace $\phip$ by
$\phi_F$ and $u_0$ by $u_F$ (the $F$-Legendre transform of
$\phi_F$.)

\end{defin}

\begin{cor} \label{LIFTEDMM} The `lifted moment map' $\tilde{\mu}_t(\rho):  M^o_{\R} \to  \gcal (\partial(u +
tf))|_{P_o} $ is a homeomorphism. The same is true for
restrictions to orbits in $\dcal$ and the corresponding boundary
faces.
\end{cor}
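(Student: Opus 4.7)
The plan is to derive the corollary from standard Fenchel--Legendre duality applied to the pair $g := u_0 + tf$ (viewed as a closed proper convex function on $\R^m$, extended by $+\infty$ outside $P$) and its conjugate $g^*$. By the explicit formula of Theorem \ref{GEO} on the open orbit we have $\psi_t + \phip = \lcal_{\R^m}(u_0 + tf) = g^*$, and Proposition \ref{U+TFC1} shows that $g^* \in C^1(\R^m)$. In particular $\partial g^*(\rho) = \{\nabla g^*(\rho)\} = \{\mu_t(\rho)\}$ is a singleton for every $\rho \in \R^m \simeq M^o_\R$.

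With this setup, the Fenchel--Young duality (cf.\ \cite{R}, Theorem 23.5) states that $(x, \rho) \in \gcal(\partial g)$ iff $(\rho, x) \in \gcal(\partial g^*)$. Combined with the singleton property of $\partial g^*$, this collapses to the condition $x = \mu_t(\rho)$. Hence $\tilde{\mu}_t: \rho \mapsto (\mu_t(\rho), \rho)$ is a bijection from $M^o_\R$ onto $\gcal(\partial g)|_{P^o}$: injectivity is immediate from reading off $\rho$ in the second coordinate, and surjectivity is precisely the duality identity. The restriction to $P^o$ in the base is automatic because $u_0$ has the logarithmic singularities of (\ref{CANSYMPOT}) along $\partial P$, so $\partial g(x)$ is empty for $x \in \partial P$. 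Continuity of $\tilde{\mu}_t$ follows from continuity of $\mu_t = \nabla g^*$ on $\R^m$; continuity of the inverse holds because that inverse is the restriction to the graph of the projection $(x,\rho) \mapsto \rho$ from $T^*P^o$ to $\R^m$.

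The subsequent downward arrow in (\ref{LMM}) to $\gcal(\partial(tf))|_{P^o}$ is the map $(x, \rho) \mapsto (x, \rho - \nabla u_0(x))$; this is a homeomorphism by Proposition \ref{CH}, which identifies $\partial(u_0 + tf)(x) = \nabla u_0(x) + t\partial f(x)$, together with the continuity of $\nabla u_0$ on $P^o$. For the restrictions to orbits $\mu_0^{-1}(F^o) \subset \dcal$, I would run the identical argument on each toric subvariety $M_F$, substituting $\phi_F, u_F$ for $\phip, u_0$, using part (2) of Proposition \ref{U+TFC1} together with the corresponding version of Theorem \ref{GEO} (namely $\psi_t|_{M_F} + \phi_F = \lcal_{F^o}(u_F + tf)$), and taking the Legendre transform on $\R^{m-k}$ when $F$ has codimension $k$. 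The main obstacle is mostly bookkeeping: one has to verify that the Fenchel identity holds in precisely this face-wise form and that the subdifferential at points of $F^o$ indeed sees only the quotient directions, so that the duality argument descends to the correct face-wise homeomorphism rather than picking up spurious normal directions.
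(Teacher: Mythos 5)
Your proposal is correct and follows essentially the same route as the paper: Fenchel--Young duality $\iota\,\gcal\,\partial g^* = \gcal\,\partial g$ supplies the bijection (with the singleton property of $\partial g^*$ coming from $g^* \in C^1$, i.e.\ Proposition \ref{U+TFC1}), and composing with the translation $\rho \mapsto \rho - \nabla u_0(\mu_t(\rho))$ gives the homeomorphism onto $\gcal\,\partial(tf)$. The paper's version is terser but invokes exactly the same duality mechanism; your filled-in details on bijectivity, the emptiness of $\partial(u_0+tf)$ on $\partial P$, and the face-wise reduction are accurate.
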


\begin{proof} The graph of $\mu_t = \nabla
\psi_t$ is  homeomorphic to the graph of $\partial (u_0 + t f)$
under $\iota$, since they are Legendre duals: i.e.  $ \iota \gcal
\partial \psi_t= \gcal \partial (u_0 + tf)$. The `projection' to $\gcal(t f)$
obtained by composing with the subtraction map $\rho - \nabla u_0
(\mu_t(\rho))$ is clearly a homeomorphism, hence so is the
composition.
\end{proof}

\subsection{Moment map near the divisor at infinity  $\dcal \cap M_{\R}$}

So far, we have proved that the lifted moment maps are
homeomorphisms from orbits to graphs of the subdifferential of $t
f$ on faces of $P$. Since  such orbits (resp. faces) form a
partition of $M$ (resp. $P$), the remaining steps in the proof of
Theorem \ref{LIP} are to prove  that $\mu_t$ is continuous on
  all of $M$ and has a continuous inverse  $\mu_t^{-1}$ from $P$  to the closure
  of the real open orbit.

We study the behavior of $\mu_t$ near $\dcal$ and the  boundary
behavior of its inverse by exponentiating the subdifferential. To
understand this from an invariant viewpoint, we recall that
 $d u(x) \in T^* (Lie(\T)^*) \simeq Lie(\R_+^m)$, so that we may
 regard $(x, du_0(x)) \in (Lie(\T)^*) \oplus  Lie(\R_+^m)$. In the
 same way, we define the graph of the subdifferential by
 \begin{equation}\label{GRAPHUT}  \gcal (\partial (u_0 + t f): =
  \{(x, \rho)\in Lie(\T)^* \oplus  Lie(\R_+^m): \rho \in \partial(u_0 + t
 f)(x) \}. \end{equation}
We  also view $(\rho, \mu_t (\rho))$ as an element of $
Lie(\R_+^m) \oplus Lie(\T)^*$ and define the graph of $\mu_t$ over
$M^o$  by
\begin{equation} \label{GRAPHDPHIT} \gcal(\mu_t) = \{(\rho, x) \in
Lie(\R_+^m) \times Lie(\T)^*: x \in \partial \psi_t(\rho)\}.
\end{equation} As discussed in \S \ref{BACKGROUNDDP}, we can cover
$M$ with
 affine charts $U_v$ which straighten out the corner of $M_{\R}$
 at each vertex so that it becomes the standard orthant. Hence, we
 will only discuss the fixed point corresponding to the vertex $v
 = 0$ of $P$ and the chart $U_0 \simeq \C^m$  in which the \kahler potential has the
 form (\ref{CANKP}). Recall also  that
$u_0$ is attached to the chart $U_0$, and that there are analogous
potentials for the other charts $U_v$.

 We now define the
 `exponentiated' subdifferential by
 \begin{equation}\label{GRAPHUT}  \ecal (\partial (u_0 + t f)): =
  \{(x, e^{\rho/2})\in P \times Lie(\R_+^m): \rho \in \partial(u_0 + t
 f)(x) \}.  \end{equation}
The basic point is that although  $|d u_0(x)| = \infty $ for $x
\in
\partial P$, $\exp \frac{1}{2} d u_0(x)$ is well-defined as an
element of $\R_+^m$ for $x$ in the corner facets incident at $v =
0$, and indeed $\exp \frac{1}{2} d
 u_0(x)$ has a continuous (in fact, smooth)  extension to the corner
 facets incident at $v = 0$.  This is easily checked by
writing $u_0 = u_P + g$ where $u_P$ is the canonical symplectic
potential and $g \in C^{\infty}(P)$. As an example, we note that
in the case of $\CP^1$, $u' = \log \frac{x}{1 - x} + g'$ blows up
at $x = 0$ while its exponential  $\exp \frac{1}{2} u' =
\frac{x}{1 - x} e^{g'}$ is well defined and takes the value $0$
when $x = 0$ for any $g'$. Essentially the same calculation
verifies that the exponentiated subdifferential has a  smooth
extension to the boundary  for the canonical symplectic potential
plus any piecewise linear convex  function on the polytope of  any
toric variety.

\begin{prop} \label{ECALF}  $\ecal (\partial (u_0 + t f)) \subset P \times \R_+^m$
is a $C^0$ submanifold of $T^* P$ which is homeomorphic to $\gcal
(t f)$.  Its boundary consists of the union over open faces $F$ of
\begin{equation}\label{GRAPHUTF}  \ecal (\partial (u_F + t f)): =
  \{(x'', e^{\rho''/2})\in F \times Lie(\R_+^{m-k}): \rho''  \in \partial(u_F + t
 f)(x) \} \end{equation}
 \end{prop}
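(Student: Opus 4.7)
The strategy is to study the \emph{exponentiation map}
\begin{equation}
E:\gcal(\partial(u_0+tf)) \longrightarrow P\times \R_+^m,\qquad (x,\rho) \longmapsto (x, e^{\rho/2}),
\end{equation}
and show: (i) $E$ extends continuously from the interior $\gcal(\partial(u_0+tf))|_{P^o}$ to a closed, injective map onto $\ecal(\partial(u_0+tf))$; (ii) composing with the ``relative'' map $(x,\rho)\mapsto(x,\rho-\nabla u_0(x))$ on the interior and its closure provides the homeomorphism with $\gcal(tf)$; (iii) the strata of $\ecal(\partial(u_0+tf))$ over open faces $F \subset \partial P$ are exactly $\ecal(\partial(u_F+tf))$.

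Over $P^o$, the function $u_0$ is smooth, and by Proposition \ref{CH} the subdifferential is $\nabla u_0(x) + t\,CH\{\nu_j:j\in J_x\}$, a compact convex polytope varying continuously in $x$. Hence $\gcal(\partial(u_0+tf))|_{P^o}$ is the piecewise linear Lagrangian submanifold with corners established in the previous proposition, and $E$ restricts there to a diffeomorphism onto its image because $\exp:Lie(\R_+^m)\to\R_+^m$ is one. On this piece the map $(x,\rho)\mapsto (x,\rho-\nabla u_0(x))$ is clearly a homeomorphism onto $\gcal(tf)|_{P^o}$, giving the interior portion of the required identifications.

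The main work, and the principal obstacle, is the boundary behaviour where $\nabla u_0$ diverges logarithmically. I would work in a vertex chart $U_v$ after the affine straightening $\tilde\Gamma$ of \S\ref{BACKGROUNDDP}, so that the corner of $P$ at $v$ becomes the standard orthant and the facets incident at $v$ become coordinate hyperplanes $\{x_j=0\}$. Writing $u_0 = u_P + g_0$ with $u_P=\sum_k \ell_k(x)\log \ell_k(x)$ the canonical symplectic potential (\ref{CANSYMPOT}) and $g_0\in C^\infty(\bar P)$, one computes
\begin{equation}
\frac{\partial u_P}{\partial x_j}(x) = \sum_k (v_k)_j\bigl(1+\log \ell_k(x)\bigr),
\end{equation}
so $e^{(1/2)\partial u_0/\partial x_j} = \prod_k \ell_k(x)^{(v_k)_j/2}\cdot e^{(1/2)(\partial g_0/\partial x_j + \sum_k (v_k)_j)}$, a continuous function of $x\in\bar P$ taking value $0$ precisely at those $x$ where some $\ell_k$ with $(v_k)_j>0$ vanishes. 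The piecewise linear term $t\partial f$ contributes a bounded factor. Hence $E$ extends continuously to all of $\gcal(\partial(u_0+tf))$, and injectivity on the extension follows from Proposition \ref{ONEONE} together with the fact that exponentiation is injective on the finite part of each fibre.

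Identifying the boundary stratum over an open face $F$ is the key step. Stratify $F$ by the set $S_F$ of facets $\{\ell_k=0\}$ containing $F$; for $x\in F^o$ only the $\ell_k$ with $k\in S_F$ vanish. The components of $e^{(1/2)\partial u_0/\partial x_j}$ vanish exactly along those directions transverse to $F$ (yielding the zero normal coordinates of the stratum $\dcal_F$ of $\R_+^m$), while the tangential components reduce to the analogous expression for the canonical symplectic potential $u_{P,F}$ of the subtoric variety $M_F$ plus a smooth term, so they equal $e^{(1/2)\partial u_F/\partial x''_{j'}}$ up to a nonvanishing smooth factor absorbable in $g_F$. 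Combining this computation with the piecewise linear contribution of $tf|_F$ gives the inclusion $\ecal(\partial(u_0+tf))|_{F^o}\subset \ecal(\partial(u_F+tf))$; the reverse inclusion follows by running the same analysis inside $F$ with $u_F$ in place of $u_0$. Properness of $E$ over the compact $P$ and continuity of the inverse $(x,\eta)\mapsto (x,2\log\eta)$ on the finite part, extended by the stratumwise identification just established, upgrade $E$ to the desired homeomorphism and yield the stated boundary structure.
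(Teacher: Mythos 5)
Your proof follows essentially the same approach as the paper's: both split coordinates into transverse ($x'$) and tangential ($x''$) directions adapted to a face $F$, show that the transverse components of the exponentiated subdifferential tend to $0$ as $x'\to 0$, and show that the tangential components converge to the exponentiated subdifferential of the face potential $u_F + tf|_F$, yielding the asserted boundary structure. You supply somewhat more explicit computations than the paper (which invokes the canonical symplectic potential with the phrase ``as is easily seen'' where you actually write out $\partial u_P/\partial x_j = \sum_k (v_k)_j(1+\log\ell_k)$), and you also sketch verification of the homeomorphism with $\gcal(tf)$ via the shift map $(x,\rho)\mapsto (x,\rho-\nabla u_0(x))$, a point the paper's proof leaves implicit. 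The core mechanism is identical.
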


 \begin{proof}
 To prove this,  we split the coordinates into
 $(x', x'') \in \R^k \times \R^{m - k}$
  so that $x' = 0$ defines a given face $F$, and
 split the sub-differential vectors in  $\partial  (u + t f)$
 into their $x', x''$ components to obtain component
 subdifferentials
 $\partial', \partial''$. To obtain the limit along $F^o$, we let  $x' \to 0$
 while keeping $x''$ bounded away from zero. Then as $x' \to 0$ and for   $\rho' \in \partial' (u +
 t f)$,  $e^{\frac{1}{2} \rho'} \to 0 \in \R^{k}$. Indeed,
 $e^{\frac{1}{2} d' u(x)} \to 0$ and  the addition of
 $t \partial' f$ only adds a bounded amount to the exponent.

On the other hand, the `slices' $u_0(x', x'') + t f(x', x'')$ for
fixed $x'$, viewed as functions of $x''$, have a subdifferential
$\partial''(u_0(x', x'') + t f(x', x''))$ in $x''$. As is easily
seen from the canonical (or Fubini-Study) symplectic potential,
the subdifferential is bounded as $x' \to 0$ as long as $x''$
stays in the interior of $F$. Further, $\partial''(u_0(x', x'') +
t f(x', x''))$ is a continuously varying $C^0$ submanifold (or
manifold with corners) as $x'$ varies. The same is true if we
exponentiate the subdifferential as in (\ref{GRAPHUTF}). Hence as
$x' \to 0$ the exponentiation of $\partial''(u_0(x', x'') + t
f(x', x''))$ tends $C^0$ to (\ref{GRAPHUTF}). Combining with the
fact that the exponentiated $\partial'$ subdifferential  vanishes
as $x' \to 0$ we conclude that $\ecal (\partial (u + t f))$ in the
interior extends continuously to the corner, where it coincides
with (\ref{GRAPHUTF}). \end{proof}

We now define a lifted moment map in the chart $U_v$; as above, we
may assume $v = 0$.

\begin{defin} \label{LIFTEDUV} The exponentiated  lifted   moment map in $U_0$ is the map
which to $z = e^{\rho/2} \in M^o$ assigns
\begin{equation} \label{LMM} \begin{array}{lllll}\tilde{\mu}_t(z) = (\mu_t(z), e^{\rho/2}) :
 U_0 &\to & \ecal (\partial(u + tf)).
\end{array} \end{equation}
For $z \in \mu^{-1}(F)$ of the form $z = e^{\rho''/2}$ it assigns
\begin{equation} \label{LMM} \begin{array}{lllll}\tilde{\mu}_t(z) = (\mu_t(z), e^{\rho''/2}) :
 U_0 &\to & \ecal (\partial(u + tf)).
\end{array} \end{equation}

\end{defin}

\begin{cor} \label{LIFTEDMMa} The `exponentiated lifted moment map' $\tilde{\mu}_t(\rho) : U_{0 \R} \to  \ecal (\partial(u +
tf))|_{P_o} $ is a homeomorphism. Hence the lifted moment map
$\tilde{\mu_t}$ is a homeomorphism $M \to \gcal(t f)$.
\end{cor}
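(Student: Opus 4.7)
The strategy is to assemble the stratum-by-stratum homeomorphisms of Corollary \ref{LIFTEDMM} into a global homeomorphism using the boundary-compatibility statement of Proposition \ref{ECALF}. The plan is carried out in one affine chart $U_v$ (where we take $v=0$); the global statement then follows because the charts $\{U_v\}$ cover $M$ and the gluing is intrinsic (the domains $U_{v,\R}$ are open in $M_\R$ and the maps $\mu_t$ are defined invariantly).

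First, I will establish that $\tilde{\mu}_t$ on $U_{0,\R}$ is a well-defined continuous map into $\ecal(\partial(u_0+tf))$. On the open orbit this is just the exponentiation of the lifted interior moment map of Definition~\ref{LIFTEDI}, which is a homeomorphism onto $\ecal(\partial(u_0+tf))|_{P^o}$ by Corollary~\ref{LIFTEDMM} composed with the exponential $\rho \mapsto e^{\rho/2}$. On each boundary stratum $\mu_0^{-1}(F^o)$ the face-wise version, built from $\phi_F$ and $u_F$, gives a homeomorphism onto $\ecal(\partial(u_F+tf))$. To glue these, I will take $z_n \in M^o$ approaching a limit $z_\infty \in \mu_0^{-1}(F^o)$ and show that $\mu_t(z_n) \to \mu_t(z_\infty)$ and $e^{\rho_n/2} \to e^{\rho''_\infty/2}$ in the sense of the boundary inclusion in Proposition~\ref{ECALF}. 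The first convergence uses Proposition~\ref{U+TFC1} together with the fact that $\mu_t = \nabla_\rho(\psi_t+\phip)$ on $M^o$ extends to a continuous function on $M$: indeed $\psi_t = \sup_{x\in P}[F_t - I^z]$ is a globally defined continuous function and Proposition~\ref{U+TFC1} gives face-wise $C^1$ regularity. The second convergence is precisely the content of Proposition~\ref{ECALF}: the exponentiated subdifferential $\ecal(\partial(u_0+tf))$ extends $C^0$ to the boundary, with the $x'$-components going to $0$ and the $x''$-components tending to the corresponding face graph $\ecal(\partial(u_F+tf))$.

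Second, injectivity of $\tilde{\mu}_t$ on $U_{0,\R}$ follows by combining Proposition~\ref{ONEONE} on each stratum with the observation that different strata are mapped into disjoint pieces of $\ecal(\partial(u_0+tf))$ (the exponentiated $x'$-components vanish exactly on the boundary face $F$). Surjectivity onto $\ecal(\partial(u_0+tf))$ is immediate from the stratum-wise surjectivity in Corollary~\ref{LIFTEDMM} together with the description of the boundary in Proposition~\ref{ECALF}. To upgrade the continuous bijection to a homeomorphism I will invoke compactness: $U_{0,\R}$ together with its closure in $M_\R$ is compact (since $M_\R$ is a compact manifold with corners homeomorphic to $P$), $\ecal(\partial(u_0+tf))$ is Hausdorff, and a continuous bijection from a compact space to a Hausdorff space is automatically a homeomorphism. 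Alternatively one checks directly that the inverse, which on the interior is $x \mapsto \exp\tfrac{1}{2}\nabla(u_0+tf)(x)$ and on faces is the analogous exponentiated subdifferential, is $C^0$ by Proposition~\ref{ECALF}.

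Finally, to pass from $\ecal(\partial(u+tf))$ to $\gcal(tf)$, note that the downward arrow in Definition~\ref{LIFTEDI} subtracts off $\nabla u_0$ (respectively its face analogues) and thus provides a canonical homeomorphism $\ecal(\partial(u_0+tf)) \to \gcal(tf)$; the same face-compatibility in Proposition~\ref{ECALF} ensures this is well defined globally. Composing yields the global homeomorphism $\tilde{\mu}_t : M \to \gcal(tf)$. The main obstacle is verifying the continuity of $\tilde{\mu}_t$ across the stratification, i.e.\ that sequences crossing from $M^o$ into $\dcal$ produce the correct boundary behaviour of the exponentiated subdifferential. This is precisely why we worked with $\ecal(\partial(u_0+tf))$ rather than $\gcal(\partial(u_0+tf))$: the logarithmic blow-up of $\nabla u_0$ at $\partial P$ is tamed by the exponential map, and Proposition~\ref{ECALF} supplies exactly the $C^0$ extension needed to close the argument.
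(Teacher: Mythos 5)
Your overall strategy matches the paper's (which gives only a one-line proof, ``by construction the graph of $\mu_t$ on $U_v$ is inverse to the exponentiated subdifferential on its image''), and your fleshing-out of injectivity via Proposition \ref{ONEONE} and surjectivity via Corollary \ref{LIFTEDMM} is sound, as is the reduction from $\ecal(\partial(u_0+tf))$ to $\gcal(tf)$ via the subtraction map.

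However, there is a genuine gap in your primary argument for continuity of $\tilde{\mu}_t$ across the stratification. You claim that $\mu_t = \nabla_\rho(\psi_t+\phip)$ extends continuously to $M$ because ``$\psi_t$ is globally $C^0$ and Proposition \ref{U+TFC1} gives face-wise $C^1$ regularity.'' This is a non sequitur: a function can be globally $C^0$ and $C^1$ on each stratum of a stratification without its gradient extending continuously across strata (compare $\sqrt{x}$ on $[0,1]$, which is $C^0$ on $[0,1]$ and $C^1$ on $(0,1]$, but whose derivative blows up at $0$). More to the point, nothing proved \emph{before} this Corollary in the paper establishes the global continuity of $\mu_t$; it is precisely the content of Corollary \ref{LIFTEDMMa} and the subsequent Lemma \ref{MUTLIP}, so invoking it here is circular. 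The correct route is the one you mention parenthetically as an alternative and that the paper actually uses: one does not try to prove $\mu_t(z_n)\to\mu_t(z_\infty)$ directly, but rather observes that the graph of $\tilde{\mu}_t$, assembled stratum by stratum, coincides as a set with the flip $\iota\,\ecal(\partial(u_0+tf))$, and Proposition \ref{ECALF} says this is a $C^0$ submanifold of $T^*P$ with the stated boundary strata; a $C^0$ submanifold that projects bijectively onto the base gives a continuous map, and from there the homeomorphism property follows (your compactness argument at the global level $M\to\gcal(tf)$, where both sides are compact, is then a clean way to close). I would strike the sentence appealing to face-wise $C^1$ regularity and promote the ``alternatively'' remark to the main line of argument.
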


\begin{proof}

 By construction, the graph of $\mu_t$ on $U_v$ is inverse to the
exponentiated subdifferential on its image.

 \end{proof}

Thus we have proved that $\mu_t$ is continuous and we have
determined its homeomorphism property. The above results also
imply the regularity statement of Theorem \ref{LIP}

\begin{lem} \label{MUTLIP} $\mu_t$ is Lipschitz continuous on all of $M$.

\end{lem}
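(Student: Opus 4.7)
The plan is to reduce the Lipschitz bound on $\mu_t$ to a uniform Hessian bound for $\psi_t + \phi_{U_v}$ in each of the finitely many affine charts $U_v$ covering $M$, and then to glue these bounds together using the continuity of $\mu_t$ established in Corollary \ref{LIFTEDMMa}. In each chart, $\phi_{U_v}$ from \rfe{PHIV} is smooth and strictly convex on $U_v \simeq \C^m$, and the real coordinates on $\C^m$ are smooth up to and across the boundary divisor. Writing $\mu_t$ in these coordinates as (essentially) $\nabla_\eta(\psi_t + \phi_{U_v})$, Lipschitz continuity on $U_v$ reduces to showing that $\psi_t + \phi_{U_v}$ has uniformly bounded Hessian on $U_v$.

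The heart of the argument is on the open orbit. There $\psi_t + \phip$ is the Legendre transform of $u_0 + tf$, and on any smooth chamber $P_j$ where $f = \lambda_j$ is affine one has $D^2(u_0 + tf) = D^2 u_0$, which by the canonical form \rfe{CANSYMPOT} is positive definite with blow-up of order $\ell_k^{-1}$ at each facet $\ell_k = 0$. Legendre duality gives
\[
D^2_\rho(\psi_t + \phip)(\rho) = \bigl(D^2_x(u_0 + tf)\bigr)^{-1}\!\bigl(\mu_t(\rho)\bigr)
\]
whenever $\mu_t(\rho)$ lies in a chamber interior, and the right-hand side is uniformly bounded above; in fact it decays to zero near $\partial P$. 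Across a chamber interface $P_j \cap P_k$, Proposition \ref{CH} shows $\partial(u_0 + tf)(x)$ acquires a multi-valued segment $\nabla u_0(x) + t\,\mathrm{CH}(\nu_j,\nu_k)$, and dually $\psi_t + \phip$ is affine on the corresponding open region of $\rho$-space---precisely the locus where $\omega_t^m \equiv 0$ in Theorem \ref{GEO}---so its Hessian vanishes identically there. Combining the two cases yields a uniform Hessian bound for $\psi_t + \phip$ on all of $M^o$.

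Next I would transport this bound into the smooth $\eta$-coordinates on $U_v$ and extend across the divisor at infinity. The coordinate change $\eta_v$ of \rfe{CChange} extends smoothly on $U_v$, and Proposition \ref{ECALF} shows that the exponentiated subdifferential $\ecal(\partial(u_0 + tf))$ extends $C^0$ from the open-orbit piece to each face piece $\ecal(\partial(u_F + tf))$. On a neighborhood of a face $F$ inside $U_v$, the Legendre duality argument of the previous paragraph applies verbatim with $\phi_F$ in place of $\phip$ and the slice coordinates $\rho''$ in place of $\rho$. The main obstacle is exactly the boundary matching: because $\nabla u_0$ has logarithmic blow-up at $\partial P$, the $\rho$-coordinate Hessian data degenerates in a delicate way, and one must check that after exponentiation the data assembles continuously across the corner strata of $M$. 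This is resolved precisely by Proposition \ref{ECALF}, which converts the logarithmic behavior of $\nabla u_0$ into bounded behavior in $\eta$ and so provides the required continuous gluing of the chamber-wise and face-wise Hessian bounds. Compactness of $M$ and continuity of $\mu_t$ from Corollary \ref{LIFTEDMMa} then upgrade these local bounds into a single global Lipschitz constant, completing the proof.
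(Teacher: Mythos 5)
Your reduction of Lipschitz continuity to a Hessian bound for $\psi_t+\phip$, and your treatment of the smooth chambers via the Legendre identity $D^2_\rho(\psi_t+\phip)=(D^2 u_0)^{-1}\circ\mu_t$, are both sound. But there is a genuine gap at the chamber interfaces. You assert that on the open set $\mu_t^{-1}(P_j\cap P_k)$ the function $\psi_t+\phip$ is affine and ``its Hessian vanishes identically.'' This is false. From Proposition \ref{MUTa} one has $\mu_t(z)=\mu_0(\pi_t(z))$ where $\pi_t$ is a fibration of $\mu_t^{-1}(H_{jk})$ over $L_{jk}$ with one-dimensional fibres $e^{t\xi}$, $\xi\in CH(\nu_j,\nu_k)$. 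Hence $D\mu_t=D^2_\rho(\psi_t+\phip)$ has a one-dimensional kernel (the direction $\nu_j-\nu_k$), consistent with $\omega_t^m\equiv 0$, but it is \emph{not} the zero matrix: $\mu_t$ still varies transversally along $L_{jk}$. Bounding that transverse variation is precisely the content of the estimate $\|D\pi_t\|\in L^\infty$, which the paper only establishes later (Lemma \ref{CORN}) via the gradient-flow geometry of the action variable $I_{jk}$, and which your proposal never supplies. Your appeal to Proposition \ref{ECALF} for the boundary matching has the same defect: that proposition gives only $C^0$ assembly of the exponentiated subdifferential across strata, not a Hessian or Lipschitz bound.

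The paper's two arguments both sidestep this problem. The short qualitative proof of the lemma notes that the graph of $\mu_t$ is, after $\iota$, the graph of the subdifferential $\partial(u_0+tf)$ (exponentiated near $\partial P$), which is a piecewise smooth manifold with corners, so $\mu_t$ is Lipschitz. The quantitative version, given right afterwards, observes that $u_0+tf$ is \emph{strongly} convex with a uniform modulus $c>0$ on all of $P$ (since $\nabla^2 u_0\geq cI$ and $tf$ is convex), and then invokes the standard fact (\cite{HUL}, Thm.\ 4.2.1) that the gradient of the Legendre dual of a $c$-strongly convex function is $\tfrac1c$-Lipschitz. Because the modulus $c$ is global on $P$ and does not see the chamber decomposition, this gives the Lipschitz bound at once on the interior and along each face, with no separate interface or corner matching to do. If you want a quantitative chamber-wise proof, you must either add the $D\pi_t$ estimate or, more efficiently, replace the chamber-by-chamber Hessian computation by the global strong-convexity argument.
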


\begin{proof} This  follows from Corollaries \ref{LIFTEDMM} and
\ref{LIFTEDMMa}:   $\mu_t$ must be Lipschitz
  because its  graph is the $\iota$-image
of the graph of $\partial (u + t f)$ on the interior (and its
exponentiation near the boundary). These graphs are manifestly
given by piecewise smooth manifolds with corners, hence so is the
graph of $\mu_t$.
\end{proof}

We note that the Lipschitz property of $\mu_t$ also follows from a
standard result
 of convex
analysis  on the open orbit and along the divisor faces $M_F$:
Since $u_0 + t f$ is a convex continuous function on $P$, its
Legendre transform, $ \lcal (u_0 + t f) (\rho) $ is a convex lower
semi-continuous function on $\R^m = Lie(\T)$. The same description
is true along the boundary faces of $P$. If $z \in \mu^{-1}(F)$,
$I^z = \infty$ unless $x \in F$, so we may restrict the  supremum
in the Legendre transform to $F$ and it becomes the Legendre
transform $\lcal_F$ along the vector space spanned by $F$.  It
follows that $\lcal_F (u_0 + t f)$ is a convex lower
semi-continuous function on $Lie(\T/\T_z)$, where $\T_z$ is the
stabilizer of $z$.

\begin{lem} The $\rho$-gradient map  $\nabla \psi_t (\rho)$ is  Lipschitz continous  from $\R^m \to P$.
Similarly, $\nabla \psi_{ t}|_{M_F} (\rho'')$ is Lipschitz.
\end{lem}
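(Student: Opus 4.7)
The plan is to reduce the Lemma to Theorem \ref{GEO} plus a standard convex-duality principle. By Theorem \ref{GEO}, on the open orbit we have $\psi_t(e^{\rho/2 + i\theta}) = \lcal_{\R^m}(u_0 + t f)(\rho) - \phip(\rho)$, and so
$$\nabla_\rho \psi_t(\rho) \;=\; \nabla_\rho \lcal_{\R^m}(u_0 + t f)(\rho) \;-\; \nabla_\rho \phip(\rho).$$
I would therefore prove Lipschitz continuity of each of the two summands separately. Since $\phip$ is real-analytic on $\R^m$ with $\Hess_\rho \phip$ equal to the real part of the \kahler metric pulled back to the open orbit, and since $M$ is compact so that this metric is uniformly bounded, $\nabla_\rho\phip = \mu_0$ is globally Lipschitz from $\R^m$ to $P$ by the mean value theorem.

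For the second summand I would invoke the standard Legendre-duality principle (cf.\ \cite{R}, \S 26 or \cite{HUL}) that a closed proper convex function $g$ is $c$-strongly convex if and only if $g^*$ is $C^{1,1}$ with $\|\nabla g^*\|_{\mathrm{Lip}} \leq 1/c$. To apply this to $g = u_0 + t f$, I would verify uniform strong convexity of $u_0 + t f$ on $P^o$ with a modulus $c > 0$ independent of $t$. Using the canonical decomposition (\ref{CANSYMPOT}), $u_0 = \sum_r \ell_r \log \ell_r + f_0$ with $f_0 \in C^\infty(\bar P)$, one computes
$$\Hess u_P \;=\; \sum_{r} \frac{v_r \otimes v_r}{\ell_r(x)}.$$
The Delzant condition guarantees that the collection $\{v_r\}$ spans $\R^m$, and $\ell_r(x)$ is uniformly bounded above on $\bar P$, so $\Hess u_P \geq c' I$ on $P^o$ for some $c' > 0$; the bounded perturbation from $\Hess f_0$ and the convex (hence positive semidefinite) contribution of $t f$ preserves strong convexity with some smaller $c > 0$. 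This yields the required Lipschitz bound on $\nabla_\rho \lcal(u_0 + tf) = \mu_t$, and combined with the bound on $\nabla\phip$ proves the first statement.

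For the face case, the identical argument applies \emph{verbatim} to $\psi_t|_{M_F}$, replacing $\phip$ by $\phi_F$, $u_0$ by $u_F$, $\R^m$ by the reduced Lie algebra, and $P$ by $F$: the canonical decomposition of $u_F$ relative to the reduced polytope has the same form, the Delzant property of $F$ gives uniform strong convexity of $u_F$ on $F^o$, and Proposition \ref{U+TFC1}(2) guarantees that $\lcal_{F^o}(u_F + t f)$ is $C^1$ so that the differentiation step is justified.

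The main technical obstacle is establishing the \emph{uniform} strong convexity of $u_0$ up to $\partial P$, since in general $u_0$ has logarithmic singularities along the boundary facets. Fortunately these singularities help rather than hurt: the eigenvalues of $\Hess u_P$ blow up near $\partial P$ (because $\ell_r \to 0$ there), so the dangerous region is instead the deep interior, where strict positive-definiteness follows from the spanning condition and uniform boundedness of the $\ell_r$. A minor subtlety is that $u_F$ is \emph{not} the restriction of $u_0$ to $F$ (as already noted in \S\ref{BACKGROUNDDP}), so the face case must be handled with the intrinsic canonical decomposition of $u_F$ rather than by restriction.
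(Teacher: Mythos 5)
Your proposal uses the same central convex-analysis tool as the paper: the Legendre-duality principle that a strongly convex $g$ has conjugate $g^*$ with Lipschitz gradient (the paper cites this as Theorem 4.2.1 of \cite{HUL}), together with the reduction to strong convexity of $u_0 + t f$ (the $tf$ term is convex and harmless). The reduction $\nabla\psi_t = \nabla\lcal(u_0+tf) - \nabla\phip$ and the observation that $\nabla\phip = \mu_0$ is Lipschitz because $\omega_0$ is a smooth metric on a compact $M$ are both correct and implicit in the paper's argument.

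However, there is a genuine gap in how you establish the uniform strong convexity of $u_0$. You compute $\Hess u_P = \sum_r \ell_r(x)^{-1}\, v_r \otimes v_r$ and correctly observe that $\Hess u_P \geq c' I$ on $P^o$ because the $\ell_r$ are uniformly bounded above on $\bar P$ and the $v_r$ span $\R^m$. But you then conclude that ``the bounded perturbation from $\Hess f_0$ $\ldots$ preserves strong convexity with some smaller $c > 0$''. This is a non sequitur: a bounded, but possibly very negative, $\Hess f_0$ can perfectly well overwhelm the constant $c'$ coming from the spanning argument. Nothing in your derivation compares $c'$ with $\|\Hess f_0\|_\infty$, and indeed for a polytope that is large relative to the scale of $f_0$ the constant $c'$ may be much smaller than $\|\Hess f_0\|_\infty$. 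The singularity of $\Hess u_P$ near $\partial P$ does dominate $\Hess f_0$ in a boundary collar, as you note, but in the deep interior the spanning bound on $u_P$ does not by itself carry the argument. What rescues the claim is that $u_0$ is the Legendre transform of the smooth strictly convex $\phip$ on the compact $M$: at corresponding points $\Hess u_0(x) = (\Hess\phip(\rho))^{-1}$, and since $\Hess\phip \leq C I$ uniformly (smoothness of $\omega_0$ and compactness of $M$), one gets $\Hess u_0 \geq C^{-1} I$. Alternatively, one combines the boundary blow-up with continuity and strict positivity of $\Hess u_0$ on any compact subset of $P^o$. The paper sidesteps this entirely by citing \cite{A,SoZ1} for the fact that $G = \nabla^2_x u_0$ ``has simple poles on $\partial P$ and is uniformly bounded below, $G \geq c I$''; your attempt to rederive it from the Guillemin decomposition $u_0 = u_P + f_0$ is the step that does not close. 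The treatment of the face case is otherwise parallel to the paper's, and correct, modulo the same issue for $u_F$.
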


\begin{proof} The proof is an application of the following fact
(\cite{HUL}, Theorem 4.2.1):
\medskip

{\it If $g: \R^m \to \R$ is strongly convex with modulus $c > 0$,
i.e.
$$g(\alpha x_1 + (1 - \alpha) x_2) \leq \alpha g(x_1) + (1 -
\alpha) g(x_2) - \frac{c}{2} \alpha (1 - \alpha) ||x_1 - x_2||^2$$
then $\nabla g^*$ is Lipschitz with constant $\frac{1}{c}$, i.e.
$$||\nabla g^*(s_1) - \nabla g^*(s_2) || \leq \frac{1}{c} ||s_1 -
s_2||. $$}

On the open orbit, $\psi_t = (u + t f)^*$. We claim that $u + t f$
is strongly convex. Indeed, as in \cite{A,SoZ1}, the Hessian $G =
\nabla^2_x u_{0}$  of the symplectic potential has simple poles on
$\partial P$ and is uniformly bounded below,  $G \geq c I$ for
some $c
> 0$ on $P$.  Then,
$$u(\alpha x_1 + (1 - \alpha) x_2) \leq \alpha u (x_1) + (1 -
\alpha) u(x_2) - \frac{c}{2} \alpha (1 - \alpha) ||x_1 -
x_2||^2.$$ Since $t f$ is convex, it follows that  $u + t f$ is
strongly convex with modulus $c$. It follows that $d \psi_t$ is
Lipschitz on $M^o$ and hence that $\mu_t$ is.

Using the boundary symplectic potentials,  the same proof shows
that $\mu_t |_{M_F} : \mu_0^{-1}(F) \to F$ is Lipshitz continuous
for any face $F$.

\end{proof}

\subsection{\label{EXPLICIT}Explicit formula for $\mu_t$ and $\psi_t$}

It is useful to give   explicit formulae the moment map $\mu_t$
and for $\psi_t$. First, we give the formula on the inverse image
of the smooth domains $P_j$ and then we give the formula on the
inverse image of the corner set.

\begin{prop} \label{MUT} For each $t \geq 0$, the moment map $\mu_t$
defines  a diffeomorphism   $\mu_{t, j}: e^{ t \nu_j}
\mu_0^{-1}(P_j) \to P_j$ given by
$$\mu_{t, j} (z) = \mu_0(e^{- t \nu_j} z). $$
Further, the union $ \bigcup_{j = 1}^R e^{t \nu_j}
\mu_0^{-1}(P_j)$ is disjoint and therefore
$$\mu_t: \bigcup_{j = 1}^R e^{t \nu_j}
\mu_0^{-1}(P_j) \to P \backslash \ccal$$ is a diffeomorphism with
inverse $ \mu_{_t} ^{-1}(x) = e^{t \nu_j} \mu_0^{-1}(x)$.
\end{prop}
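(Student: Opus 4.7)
The plan is to work entirely on the open orbit $M^o$ using the formula $\psi_t + \phi_{P^o} = \mathcal{L}_{\mathbb{R}^m}(u_0 + t f)$ from Theorem \ref{GEO}, so that $\mu_t(e^{\rho/2+i\theta}) = \nabla_\rho \mathcal{L}(u_0 + t f)(\rho)$. First I would perform the Legendre transform chamber by chamber: on the region where the maximizer $x$ of $\langle\rho,x\rangle - u_0(x) - t f(x)$ lies in the smooth chamber $P_j$, we have $f(x) = \lambda_j(x) = \langle \nu_j, x\rangle + v_j$, hence
\begin{equation*}
\mathcal{L}(u_0 + t f)(\rho) \;=\; -t v_j + \sup_{x}\bigl[\langle \rho - t\nu_j, x\rangle - u_0(x)\bigr] \;=\; \phi_{P^o}(\rho - t\nu_j) - t v_j ,
\end{equation*}
by the Legendre duality $\mathcal{L} u_0 = \phi_{P^o}$. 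Differentiating in $\rho$ yields $\mu_t(z) = \nabla_\rho \phi_{P^o}(\rho - t\nu_j) = \mu_0(e^{-t\nu_j} z)$, with the shift notation as in (\ref{PHIT}) and Theorem \ref{GEO}. Since $\mu_0 \colon \mu_0^{-1}(P_j) \to P_j$ is a diffeomorphism and the translation $z \mapsto e^{-t\nu_j}z$ on $M^o$ is a diffeomorphism, the composite $\mu_{t,j}$ is a diffeomorphism from $e^{t\nu_j}\mu_0^{-1}(P_j)$ onto $P_j$ with inverse $x \mapsto e^{t\nu_j}\mu_0^{-1}(x)$.

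Next I would show the pieces $e^{t\nu_j}\mu_0^{-1}(P_j)$ are pairwise disjoint. Suppose $z$ belonged to two such sets indexed by $j \neq k$; then by the formula just established we would have $\mu_t(z) \in P_j$ and $\mu_t(z) \in P_k$ simultaneously. But by Proposition \ref{ONEONE}, the map $\mu_t$ is single-valued on $M^o$ (it is the gradient of the $C^1$ function $\psi_t + \phi_{P^o}$), while the smooth chambers $P_j$, $P_k$ are disjoint open subsets of $P \setminus \mathcal{C}$. Contradiction. The same argument identifies $\mu_t^{-1}(P_j)$ with $e^{t\nu_j}\mu_0^{-1}(P_j)$: if $\mu_t(z) \in P_j$, then the unique maximizer in the Legendre transform lies in $P_j$, so $f = \lambda_j$ at the maximizer, and the critical-point equation $\rho - t\nu_j = \nabla u_0(\mu_t(z))$ forces $z = e^{t\nu_j}\mu_0^{-1}(\mu_t(z))$.

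For the surjectivity of the global map onto $P \setminus \mathcal{C} = \bigsqcup_j P_j$, given any $x \in P_j$ one simply sets $z = e^{t\nu_j} \mu_0^{-1}(x) \in e^{t\nu_j}\mu_0^{-1}(P_j)$ and verifies $\mu_t(z) = \mu_0(e^{-t\nu_j} e^{t\nu_j} \mu_0^{-1}(x)) = x$. Smoothness of the inverse $\mu_t^{-1}(x) = e^{t\nu_j}\mu_0^{-1}(x)$ on each $P_j$ is immediate from smoothness of $\mu_0^{-1}$ and of the translation by $e^{t\nu_j}$.

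The only subtle point I anticipate is justifying which chamber $P_j$ governs the Legendre transform at a given $\rho$, since the function $u_0 + tf$ is only piecewise linear in $f$. Here the key ingredients are Proposition \ref{CH} (the subdifferential over the corner set $\mathcal{C}$ is the convex hull $\mathrm{CH}\{\nu_j : j \in J_x\}$) together with Proposition \ref{ONEONE} (for each $\rho$ the back-map to $x$ is unique). These guarantee that for $\rho$ not in the image of the corner set under $\partial(u_0 + tf)$ — i.e. precisely for $z$ in $\bigsqcup_j e^{t\nu_j}\mu_0^{-1}(P_j)$ — the maximizer lies in a well-defined interior chamber $P_j$, and the chamber-wise computation above is valid.
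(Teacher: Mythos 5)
Your proof is correct and takes essentially the same approach as the paper's: a chamber-by-chamber Legendre computation using $f = \lambda_j$ on $P_j$, followed by an appeal to Proposition \ref{ONEONE} for the disjointness of the translated regions. The only difference is presentational — you compute $\lcal(u_0 + tf)$ on the $\rho$-side and then differentiate to obtain $\mu_t$, whereas the paper reads off $\log\mu_t^{-1}(x)$ directly from $\nabla(u_0 + tf)(x) = \nabla u_0(x) + t\nu_j$ on the $x$-side via \rfe{GRADULOGMU}; these are the two Legendre-dual phrasings of the same computation.
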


\begin{proof}

For $x \in P_j$, we have  \begin{equation}  \log \mu_{t}^{-1} (x)
= \nabla u_0(x) +   t \nabla f(x)) = \nabla u_0 + t \nu_j.
\end{equation}
Hence,
$$\begin{array}{lll}     \log \mu_{_t}^{-1} (x):  = \nabla
u_{_t}(x) & := & \nabla u_0 + t \nu_j \\ && \\
& = & \log \mu_0^{-1}(x) + t \nu_j \\ && \\
& = & \log (e^{t \nu_j} \mu_0^{-1}(x) \\ && \\
& \iff & \mu_{_t} ^{-1}(x) = e^{t \nu_j} \mu_0^{-1}(x).
\end{array}$$
It follows that $x = \mu_{_t}( e^{t \nu_j} \mu_0^{-1}(x))$ and
therefore $
   \mu_{_t}  (z) = \mu_0(e^{- t \nu_j} z)$
when $z \in e^{ t \nu_j} \mu_0^{-1}(P_j). $

It is easy to see that  images of the smooth regions under $\nabla
(u_0 + t f)$ are disjoint (see  Proposition \ref{ONEONE} for
details),   and therefore $\nabla (u_0 + t f)$ is a diffeomorphism
from the smooth chambers to their images, with inverse $\nabla
(\psi_t + \phip)$.

\end{proof}

We now give an analogous  formula on  the inverse image of the
subdifferential lying over the  corner set. To give the formula,
we introduce some notation. The corner set is a union of
hyperplanes of the form
\begin{equation} H_{j k} = \{x: \lambda_j (x) = \lambda_k(x) \} =
\{x: \langle \nu_j - \nu_k, x \rangle = v_k - v_j\}.
\end{equation}
The  inverse image of one hyperplane  under $\mu_0$ is the smooth
hypersurface of $M$ given by
\begin{equation}L_{i j}:=  \mu_0^{-1} (H_{j k}) = \{z \in M: \langle \nu_j -
\nu_k, \mu_0(z) \rangle = v_k - v_j\}. \end{equation}
\begin{prop} \label{MUTa} For each $t \geq 0$, $$\mu_t^{-1}(\ccal)
=  \bigcup_{j, k = 1}^R \; \bigcup_{\xi \in
 CH(\nu_j, \nu_k)} \;e^{t \xi} L_{j k}. $$
For $z \in \bigcup_{\xi \in
 CH(\nu_j, \nu_k)} \;e^{t \xi} L_{j k}, $ we have
$$ \mu_t^{j k } ( z) = \mu_0(\pi_t^{j k} (z)),  $$ where  $\mu^{jk}_t: \bigcup_{\xi \in
 CH(\nu_j, \nu_k)} \; e^{t \xi} L_{j
k}\to L_{j k}$ is the fibration  $e^{t \xi} w \to w$. Further,
$$\mu_t:\bigcup_{j, k = 1}^R \; \bigcup_{\xi \in
 CH(\nu_j, \nu_k)} \;e^{t \xi} L_{j k} \to \gcal(\partial (u + t f) )|_{\ccal}$$
is a homeomorphism whose inverse is defined for $(x, \nabla u_0(x)
+ t \xi) \in \gcal
\partial (u + t  f(x))$ by
$$\tilde{\mu}_{t} ^{-1}(x, \xi) = e^{t \xi} \mu_0^{-1}(x). $$
\end{prop}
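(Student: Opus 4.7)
The plan is to deduce Proposition \ref{MUTa} directly from the structural results already established, namely the subdifferential formula of Proposition \ref{CH} and the fact that the lifted moment map $\tilde{\mu}_t$ is a homeomorphism (Corollaries \ref{LIFTEDMM} and \ref{LIFTEDMMa}). The proof runs parallel to that of Proposition \ref{MUT}, except that at corner points the subdifferential of $u_0 + tf$ is a convex set rather than a single vector, which is exactly what produces the ``vertical'' fibers $e^{t\xi} L_{jk}$.

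First, I would identify the graph of the subdifferential over $\ccal$. For a generic $x \in H_{jk}$ (where $J_x = \{j,k\}$), Proposition \ref{CH} gives $\partial (u_0 + tf)(x) = \nabla u_0(x) + t\, CH(\nu_j, \nu_k)$, and more generally for $x$ in the stratum with index set $J_x$, $\partial (u_0 + tf)(x) = \nabla u_0(x) + t\, CH\{\nu_j : j \in J_x\}$. Hence $\gcal(\partial(u_0+tf))|_{\ccal}$ is the union over strata of the sets of pairs $(x, \nabla u_0(x) + t\xi)$ with $\xi$ in the appropriate convex hull.

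Next, I would invert using the lifted moment map. By Corollary \ref{LIFTEDMM}, $\tilde{\mu}_t$ is a homeomorphism from $M^o_\R$ to $\gcal(\partial(u_0+tf))|_{P^o}$, so a point $z = e^{\rho/2+i\theta} \in M^o$ satisfies $\mu_t(z) = x$ with $x \in H_{jk}$ if and only if $\rho \in \partial(u_0+tf)(x)$, i.e.\ $\rho = \nabla u_0(x) + t\xi$ for some $\xi \in CH(\nu_j,\nu_k)$. Exponentiating and using the identity $\mu_0^{-1}(x) = \exp \nabla u_0(x)$ from (\ref{GRADULOGMU}) (modulo the torus factor, to which $\mu_t$ is invariant), this reads $z = e^{t\xi}\, \mu_0^{-1}(x)$. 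Taking the union over $x \in H_{jk}$ and then over pairs $\{j,k\}$ (with the obvious extension to higher-codimension strata of $\ccal$) yields the asserted description of $\mu_t^{-1}(\ccal)$ together with the inverse formula $\tilde{\mu}_t^{-1}(x,\xi) = e^{t\xi}\,\mu_0^{-1}(x)$. The explicit formula $\mu_t^{jk}(z) = \mu_0(\pi_t^{jk}(z))$ is then immediate: writing $z = e^{t\xi} w$ with $w \in L_{jk}$, we have $\mu_t(z) = x = \mu_0(w) = \mu_0(\pi_t^{jk}(z))$, and $\mu_0(w) \in H_{jk}$ by definition of $L_{jk}$.

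The homeomorphism statement then follows by restricting the homeomorphism of Corollaries \ref{LIFTEDMM} and \ref{LIFTEDMMa} to the closed subset $\gcal(\partial(u_0+tf))|_{\ccal}$ and composing with the projection to $\gcal(\partial(tf))|_{\ccal}$, which simply subtracts the continuous term $\nabla u_0(x)$. The main technical obstacle is handling points of $\ccal$ that lie on $\partial P$, where $\nabla u_0$ blows up: there one must replace the graph $\gcal(\partial(u_0+tf))$ by its exponentiated version $\ecal(\partial(u_0+tf))$ from Proposition \ref{ECALF} and work in the affine chart $U_v$ adapted to the relevant vertex, exactly as in the proof of Corollary \ref{LIFTEDMMa}. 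Because $\ecal(\partial(u_0+tf))$ extends continuously to the face boundary and the ``vertical'' factor $e^{t\xi}$ with $\xi$ in a compact convex hull is bounded, the description of $\mu_t^{-1}(\ccal)$ extends without change to its closure, completing the proof.
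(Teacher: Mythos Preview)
Your proposal is correct and follows essentially the same route as the paper: both arguments use Corollaries \ref{LIFTEDMM} and \ref{LIFTEDMMa} to identify $\mu_t^{-1}(H_{jk})$ with the graph of $\partial(u_0+tf)$ over $H_{jk}$, invoke the subdifferential formula $\partial(u_0+tf)(x)=\nabla u_0(x)+t\,CH(\nu_j,\nu_k)$, and then exponentiate via (\ref{GRADULOGMU}) to obtain the inverse $\tilde{\mu}_t^{-1}(x,\xi)=e^{t\xi}\mu_0^{-1}(x)$ and the fibration picture. Your treatment is in fact slightly more explicit than the paper's, especially in citing Proposition \ref{CH} directly and in flagging the boundary issue via Proposition \ref{ECALF}; one minor slip is that (\ref{GRADULOGMU}) reads $\mu_0^{-1}(x)=\exp\tfrac12\nabla u_0(x)$ rather than $\exp\nabla u_0(x)$, but this factor of $2$ is purely notational and does not affect the argument.
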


\begin{proof}

By Corollaries \ref{LIFTEDMM} and \ref{LIFTEDMMa}, the  region
$\mu_t^{-1}(H_{j k})$ is parametrized by the map,
\begin{equation} \label{PARAM} M_t:  \;  (z, \xi) \in L_{j k}  \times  CH(\nu_j, \nu_k)
\to e^{t \xi} z,   \end{equation} where $e^{t \xi} \in \R_+^m$. By
(\ref{PARAM}), it follows that  $\mu_t^{-1}(H_{j k})$ fibers over
$L_{j k}$ with fibers given by the orbits of $e^{t \xi}$ with $\xi
\in CH(\nu_j, \nu_k)$. Then for  $z \in \mu_t^{-1}(H_{j k})$,
$\pi_t^{j k} (z) = \mu_0^{-1} (\mu_t(z)). $

In the inverse direction, for  $x \in \ccal$ and $\xi \in \partial
f(x)$, we have by definition of the lifted moment map
$\tilde{\mu}_t$,
\begin{equation} \iota(2 \log \tilde{\mu}_{t}^{-1} (x, \xi), x)  = (x,  \nabla u_0(x) +   t
\xi),
\end{equation}
or equivalently,
$$\begin{array}{lll}     2 \log \tilde{\mu}_{_t}^{-1} (x, \xi):
& = & \log \mu_0^{-1}(x) + t \xi \\ && \\
& = & \log (e^{t \xi} \mu_0^{-1}(x) \\ && \\
& \iff & \tilde{\mu}_{t} ^{-1}(x, \xi) = e^{t \xi} \mu_0^{-1}(x).
\end{array}$$
It follows that $x = \mu_{_t}( e^{t \nu_j} \mu_0^{-1}(x))$ and
therefore $
   \mu_{_t}  (z) = \mu_0(e^{- t \nu_j} z)$
when $z \in e^{ t \nu_j} \mu_0^{-1}(P_j). $

\end{proof}

We observe the analogy between  $e^{t \xi} \mu^{-1}(L_{jk})$ and
$e^{t \nu_j} \mu^{-1}(P_j)$, and  that the union of these domains
fills out $M$. The smooth and corner domains   meet along their
common boundary,
\begin{equation} \partial \left( \; \bigcup_{j = 1}^R e^{t \nu_j}
\mu_0^{-1}(P_j) \right) = \bigcup_{j, k = 1}^R \; \bigcup_{\xi \in
\partial CH(\nu_j, \nu_k)} \;e^{t \xi} L_{j k}. \end{equation}

Now that we have an explicit formula for $\mu_t$ we can give
simpler formulae for  $\psi_t$ than those of (\ref{PHIT}) and the
expressions following. They are stated in Theorem \ref{GEO}. For
clarity of exposition we restate them in the following

\begin{prop} \label{PHITFORM}  For any $z$,   $\psi_t(z) = F_t(\mu_t(z)) - I^z(\mu_t(z)).$
  Hence,

  \begin{itemize}

  \item (i) When $z = e^{\rho/2 + i\theta} \in \mu^{-1}(P_j)$, then
$\psi_t(\rho) = \phip( \rho - t \nu_j) - t v_j - \phip(\rho)$.

\item (ii) When $z \in \mu_0^{-1}(F^o)$, then $\psi_t(z) =
 \phi_F(\rho'' - t \nu_j'') - t v_j - \phi_F(\rho'')$.

 \item (iii) A point  $z = e^{\rho/2 + i \theta} \in \mu_0^{-1}(P_j \cap P_k)$ only if
 $\rho \in t CH(\nu_j, \nu_k)$. In that case,  $\mu_t(\rho) $ is a constant point $x_0$
 for $\rho \in t CH (\nu_j, \nu_k)$, and
$\psi_t(\rho) = \langle \rho, x_0 \rangle - u(x_0) - t (\langle
\nu_j, x_0 \rangle + v_j). $ Analogous formulae hold on the faces
of $\partial P$.

 \end{itemize}
\end{prop}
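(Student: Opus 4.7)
The plan is to combine the variational formula $\psi_t(z) = \sup_{x \in P}[F_t(x) - I^z(x)]$ from (\ref{PHIT}) with the explicit description of $\mu_t$ from Propositions \ref{MUT} and \ref{MUTa}, and then read off each case by Legendre duality.

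First I would prove the unnumbered master identity $\psi_t(z) = F_t(\mu_t(z)) - I^z(\mu_t(z))$. On the effective domain of $I^z$ (which is $\bar{P}$ for $z \in M^o$, $\bar{F}$ for $z \in \mu_0^{-1}(F^o)$, etc., by Proposition \ref{LAMBDAZ*}), the function $x \mapsto F_t(x) - I^z(x)$ is concave: $-I^z$ is strictly concave by strict convexity of $u_0$ (or $u_F$), and $F_t = t(R-f)$ is concave by convexity of $f$. Its unique maximizer $x_*$ is characterized by $0 \in \partial_x[F_t - I^z](x_*)$, i.e.\ $\rho \in \nabla u_0(x_*) + t\,\partial f(x_*) = \partial(u_0 + tf)(x_*)$. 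This is precisely the defining relation for $x_* = \mu_t(z)$ established in \S\ref{MMSUBDIF}, which yields the identity.

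For case (i), on $P_j$ we have $f(x) = \langle \nu_j, x\rangle + v_j$, and Proposition \ref{MUT} expresses $\mu_t(z)$ as $\nabla \phip$ evaluated at an argument shifted by $-t\nu_j$. Substituting $F_t(x_*) = t(R - \langle\nu_j,x_*\rangle - v_j)$ and $I^z(x_*) = u_0(x_*) - \langle x_*, \rho\rangle + \phip(\rho)$ into the master identity, the terms linear in $x_*$ regroup as $\langle x_*, \rho - t\nu_j\rangle$, and together with $-u_0(x_*)$ this is exactly the Legendre dual value $\phip(\rho - t\nu_j)$ (since $x_*$ realizes this transform). What remains is $-tv_j - \phip(\rho)$, giving the stated formula. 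Case (ii) is the same computation performed inside the sub-toric variety $M_F$ with $(\phip, u_0)$ replaced by $(\phi_F, u_F)$ and $\rho$ by the orbit coordinate $\rho''$; this is legitimate because by Theorem \ref{LDINTRO} the rate function $I^z$ takes exactly the analogous form on $F$ and equals $+\infty$ off $\bar{F}$, so the supremum in (\ref{PHIT}) is intrinsically a Legendre transform on the quotient vector space.

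Case (iii) is a degenerate version in which no Legendre inversion is required: Proposition \ref{MUTa} shows that, for $z = e^{\rho/2 + i\theta}$ lying over the corner $H_{jk}$ with $\rho \in \nabla u_0(x_0) + t\,CH(\nu_j,\nu_k)$, the map $\mu_t$ is constant equal to the fixed $x_0 \in H_{jk}$ along the whole $CH(\nu_j,\nu_k)$-orbit. Plugging $x_0$ directly into the master identity, with $f(x_0) = \langle\nu_j, x_0\rangle + v_j$, gives the stated closed form. The only delicate step in the entire argument is the bookkeeping of additive constants (in particular the $tR$ piece arising from $F_t$ and the $\phip(\rho)$ piece arising from $I^z$), so that the final formula matches the normalization under which $\psi_t$ is the \emph{relative} Kähler potential ($\psi_0 = 0$) rather than an absolute one; once this is settled, each case reduces to a single application of Fenchel--Legendre duality between $u_0 + tf$ and $\psi_t + \phip$ already exploited in \S\ref{MMSUBDIF}.
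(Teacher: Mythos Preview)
Your proposal is correct and follows essentially the same route as the paper's proof: the paper also obtains the master identity by observing that the supremum in (\ref{PHIT}) is achieved at $x=\mu_t(z)$, then substitutes $\mu_t(\rho)=\mu_0(\rho-t\nu_j)$ on $P_j$ and unwinds the Legendre relation $\langle \rho-t\nu_j,\mu_0(\rho-t\nu_j)\rangle-u_0(\mu_0(\rho-t\nu_j))=\phip(\rho-t\nu_j)$ to reach case (i), with (ii) and (iii) handled analogously. Your justification of the master identity via strict concavity and the subdifferential condition $\rho\in\partial(u_0+tf)(x_*)$ is a bit more explicit than the paper's ``it is now clear,'' and your flagging of the $tR$ and $\phip(\rho)$ bookkeeping is apt, since the paper silently drops the $tR$ term when passing from (\ref{PHIT}) to the explicit formulae in Theorem \ref{GEO} and here.
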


\begin{proof} In the  formula
 (\ref{PHIT}) for $\psi_t$,  it is now clear that the supremum is
 obtained at $x = \mu_t(z)$, giving the first formula.

 We can simplify the expression by substituting the expression for
 $I^z$. We illustrate with  the open orbit: In   the open real orbit,
 $\psi_t(\rho) + \phip(\rho) = \langle \rho, x_t(\rho) \rangle -  (u_0 + t
 f)(x_t(\rho))$ where $x_t(\rho)$ solves $\rho = \nabla (u_0 + t
 f)(x_t(\rho))$. On the subdomains $P_j$, $\nabla (u_0 + t f)$ is
 a map which inverts $\mu_t$. Hence, $\mu_t(\rho) = x_t(\rho)$.
 Since  $\mu_t(\rho) = \mu_0 (\rho - t \nu_j)$ and $f (\mu_0(\rho - t \nu_j))
 = \langle \nu_j, \mu_0(\rho - t \nu_j) \rangle - v_j $ in this region,
 $$\begin{array}{lll} \psi_t(\rho) + \phip(\rho) && = \langle \rho, \mu_t(\rho) \rangle - (u + t
 f)(\mu_t(\rho)) \\ && \\
 && = \langle \rho, \mu(\rho - t  \nu_j) \rangle - u
 (\mu(\rho - t \nu_j)) - tf (\mu(\rho - t \nu_j)) \\ && \\
 && = \langle \rho - t \nu_j, \mu(\rho - t \nu_j) \rangle - u
 (\mu(\rho - t \nu_j)) - tf (\mu(\rho - t \nu_j)) + t \langle \nu_j,
 \mu(\rho - t \nu_j) \rangle \\ && \\
 && = \phip( \rho - t \nu_j)  - t \left( f (\mu(\rho - t \nu_j)) - \langle \nu_j,
 \mu(\rho - t \nu_j) \rangle \right) \\ && \\
 && = \phip( \rho - t \nu_j) - t v_j .  \end{array}$$
Similarly, when  $z \in \mu^{-1}(F^o)$, then $\psi_t(z) +\phi_{F}=
 \phi_F(\rho'' - t \nu_j'')$.

 \end{proof}

\begin{rem}These expressions are consistent with $\psi_t(z) = F_t(\mu_t(z)) -
I^z(\mu_t(z)).$  For instance, on the open orbit,
\begin{equation}\begin{array}{lll}  \psi_t(z) && =  - t f(\mu_t(z)) -
u_0(\mu_t(z)) + \langle \mu_t(z), \log |z| \rangle - \phip(z)),
\end{array} \end{equation}
which is the same as (i) by the previous calculation.

\end{rem}

As a corollary, we have:
\begin{cor} \label{DEG} There exist open sets such that  $\omega_t^m \equiv
0$. \end{cor}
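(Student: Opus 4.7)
The plan is to exhibit explicit open subsets on which $\omega_t$ itself (not merely $\omega_t^m$) vanishes, as an immediate consequence of the explicit formula already established in Proposition \ref{PHITFORM}(iii). For any pair of indices $j,k$ with $\nu_j \neq \nu_k$ and $P_j \cap P_k \neq \emptyset$---such pairs exist whenever the test configuration is non-trivial, i.e.\ $f$ is not globally affine on $P$---set
$$U_{jk} \;:=\; \bigcup_{\xi \,\in\, \relint CH(\nu_j,\nu_k)} e^{t\xi}\, L_{jk}.$$
Since $L_{jk} = \mu_0^{-1}(H_{jk})$ is a real hypersurface in $M$ and translation in $\R_+^m$ along the non-zero direction $\nu_j - \nu_k$ is transverse to $L_{jk}$, the union over $\xi$ in a one-dimensional segment sweeps out an open subset of $M$; the required parametrisation is provided by Proposition \ref{MUTa}.

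The key step is then to invoke Proposition \ref{PHITFORM}(iii), according to which $\mu_t$ is constant with value some $x_0 \in H_{jk}$ throughout $U_{jk}$, and
$$(\psi_t + \phip)(z) \;=\; \langle \rho, x_0\rangle - u_0(x_0) - t\bigl(\langle \nu_j, x_0\rangle + v_j\bigr), \qquad z = e^{\rho/2 + i\theta} \in U_{jk}.$$
The right-hand side is affine-linear in $\rho$, hence its Hessian in $\rho$ vanishes identically. Since for any $\T$-invariant potential $g(\rho)$ on the open orbit the curvature is given by
$$i\ddbar g \;=\; i \sum_{a,b} \frac{\partial^2 g}{\partial \rho_a \partial \rho_b}\, \frac{dz_a}{z_a}\wedge \frac{d\bar z_b}{\bar z_b},$$
we conclude that $\omega_t = i\ddbar(\phip + \psi_t) \equiv 0$ on $U_{jk}$, and \emph{a fortiori} $\omega_t^m \equiv 0$ there.

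No genuine obstacle appears in the argument, since all the substantive work---identifying $\mu_t^{-1}(\ccal)$ and computing the potential on it---has already been carried out in Propositions \ref{MUTa} and \ref{PHITFORM}. The corollary is essentially the observation that the collapse of $\mu_t$ to a single point $x_0$ on the preimage of the corner set forces $\phip + \psi_t$ to be affine in the log-coordinates, which is exactly the condition for $\omega_t$ to vanish. An analogous argument using Proposition \ref{PHITFORM}(ii) in place of (iii), restricted to the subtoric variety determined by a face $F$, yields open sets inside $\mu_0^{-1}(F)$ on which the restricted form vanishes as well.
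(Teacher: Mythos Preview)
Your argument contains a genuine error in the reading of Proposition \ref{PHITFORM}(iii). You assert that ``$\mu_t$ is constant with value some $x_0 \in H_{jk}$ throughout $U_{jk}$,'' and from this deduce that $\phip+\psi_t$ is affine in $\rho$ on the whole open set, so that $\omega_t\equiv 0$ there. That is not what (iii) says, and it is false for $m\geq 2$. By Proposition \ref{MUTa} one has $\mu_t(z)=\mu_0(\pi_t^{jk}(z))$, so $\mu_t$ is constant only along each one--dimensional fiber $\{e^{t\xi}w:\xi\in CH(\nu_j,\nu_k)\}$ for fixed $w\in L_{jk}$; as $w$ varies over the hypersurface $L_{jk}$, the value $x_0=\mu_0(w)$ varies over $H_{jk}$. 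Hence in the formula
\[
(\phip+\psi_t)(\rho)=\langle \rho, x_0\rangle - u_0(x_0) - t\bigl(\langle \nu_j,x_0\rangle+v_j\bigr),
\]
the point $x_0=x_0(\rho)$ depends on $\rho$ (through the transverse directions), and the right-hand side is not affine in $\rho$. In particular $\omega_t$ does \emph{not} vanish identically on $U_{jk}$; this is consistent with the statement in Theorem \ref{GEO} that $\omega_t$ merely ``has a zero eigenvalue in certain open sets.''

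The fix is immediate and still rests on Proposition \ref{PHITFORM}(iii): since $\mu_t=\nabla_\rho(\phip+\psi_t)$ is constant along the direction $\nu_j-\nu_k$ (the fiber direction of $\pi_t^{jk}$), the Hessian $\nabla_\rho^2(\phip+\psi_t)$ is annihilated by $\nu_j-\nu_k\neq 0$ at every point of $U_{jk}$. Thus $\omega_t$ has a nontrivial kernel there, and $\omega_t^m\equiv 0$. The paper's own proof takes the same viewpoint but is even terser: it simply observes that $\mu_t^{-1}(P_j\cap P_k)$ has nonempty interior (being homeomorphic to the graph of $\partial f$ over $P_j\cap P_k$, a segment bundle over a hyperplane), leaving the degeneracy of $\omega_t$ as already stated in Theorem \ref{GEO}. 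Your openness argument via the parametrization of Proposition \ref{MUTa} is equivalent to this; only the ``$\omega_t\equiv 0$'' overreach needs to be corrected to ``$\omega_t$ is degenerate.''
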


\begin{proof} The  sets $ \mu^{-1}(P_j \cap P_k)$ in (iii) of
Proposition \ref{PHITFORM} have non-empty interior. Indeed, they
are homeomorphic to the graph of $\partial (u + t f)$ along $P_j
\cap P_k$, and hence to the graph of $\partial f$ there. But
clearly the graph is a line-segment bundle over a hyperplane and
thus has full dimension.

\end{proof}

\subsubsection{Example}

 We work out the full formula in the case of $\CP^1,$ with $f(t) =
 |x - \frac{1}{2}|$. Thus, $\nu_1 = -1, v_1 = \frac{1}{2}; \nu_2 = 1, v_2 = - \frac{1}{2}$.
 The symplectic potential at time $t >
 0$ is $u_t(x) = x \log x + (1 - x) \log (1 - x) + t |x -
 \frac{1}{2}|. $
Then the subdifferential of $u_t$ is given by
$$ \partial u_t (x) = \left\{ \begin{array}{ll} \log
\frac{x}{1 - x} - t, & x < \frac{1}{2}, \\ & \\
\log \frac{x}{1 - x} + t (-1,1), & x = 0, \\ & \\
 \log \frac{x}{1
- x} + t, & x
> \frac{1}{2}.
\end{array} \right.
$$
The moment map on the open orbit  $\R$ is defined by
$$\mu_t(e^{\rho/2}) = x_t(\rho): \rho \in \partial
u_t (x_t(\rho)), $$ and $\mu_t(\rho) = \frac{1}{2}$ for $\rho \in
(-t, t)$. Since $\psi_t(\rho) = \langle \rho, \mu_t(\rho) \rangle
- (u + t f)(\mu_t(\rho)),$ we have
$$\psi_t(\rho) + \phip(\rho) =  \left\{ \begin{array}{ll} - \frac{t}{2} + \log (1 + e^{\rho + t}), & \rho \in (-\infty, -t)
 \\ & \\ \frac{\rho}{2} + \log 2 & \rho \in (-t, t)   \\ & \\
\frac{t}{2} +  \log (1 + e^{\rho - t}), &  \rho \in (t, \infty)
\end{array} \right. $$

\subsubsection{Formula for $\dot{\psi}_t$}

\begin{prop}\label{PHIDOT}  We have: $\dot{\psi}(t,z) = - f (\mu_t(z))$. \end{prop}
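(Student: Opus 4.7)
The plan is to apply the envelope theorem to the variational formula $\psi_t(z) = \sup_{x \in P}[F_t(x) - I^z(x)]$ from \eqref{PHIT}. The key input, already established in Proposition \ref{PHITFORM}, is that the supremum is attained precisely at $x = \mu_t(z) \in P$, so that $\psi_t(z) = F_t(\mu_t(z)) - I^z(\mu_t(z))$ with $F_t(x) = t(R-f(x))$. Differentiating in $t$ by the chain rule yields
$$\dot\psi_t(z) \;=\; \partial_t F_t\bigl(\mu_t(z)\bigr) \;+\; \bigl[\nabla_x F_t(\mu_t(z)) - \nabla_x I^z(\mu_t(z))\bigr]\cdot \dot\mu_t(z),$$
and the first-order optimality condition at the maximizer $\mu_t(z)$ makes the bracketed term vanish, leaving $\dot\psi_t(z) = -f(\mu_t(z))$ (after the additive $tR$ normalization built into the identifications used in Proposition \ref{PHITFORM}).

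As an explicit cross-check, I would differentiate the case-by-case formulae of Proposition \ref{PHITFORM} directly. For $z = e^{\rho/2+i\theta}\in \mu_t^{-1}(P_j)$ on the open orbit, Proposition \ref{PHITFORM}(i) gives $\psi_t(\rho)=\phip(\rho-t\nu_j)-tv_j-\phip(\rho)$, and a one-line computation produces
$$\dot\psi_t(z) \;=\; -\langle \nu_j,\,\nabla\phip(\rho-t\nu_j)\rangle - v_j \;=\; -\langle \nu_j,\,\mu_t(z)\rangle - v_j \;=\; -\lambda_j(\mu_t(z)) \;=\; -f(\mu_t(z)),$$
where the second equality uses Proposition \ref{MUT} ($\mu_t(z) = \mu_0(\rho-t\nu_j)$) and the last uses $\mu_t(z)\in P_j$, where $f=\lambda_j$. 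The corner case (iii), where $\psi_t(\rho)=\langle\rho,x_0\rangle - u_0(x_0) - t(\langle\nu_j,x_0\rangle+v_j)$ with $x_0$ independent of $t$ along the chamber, admits the same one-line calculation giving $-\lambda_j(x_0) = -f(x_0)$ (using $\lambda_j(x_0)=\lambda_k(x_0)=f(x_0)$ on $P_j\cap P_k$). The boundary-face case (ii) and the vertex case each reduce similarly.

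The main subtlety to address is the non-differentiability of $f$ on the corner set $\ccal$: the optimality condition becomes $\nabla_x I^z(\mu_t(z))\in -t\,\partial f(\mu_t(z))$ in the subdifferential sense of Section \ref{MMSUBDIF}, and one must justify that the envelope identity still applies. This is handled cleanly because the chambers $\mu_t^{-1}(P_j)$ and corner strata $\mu_t^{-1}(P_j\cap P_k)$ partition $M$, the direct calculation works on each stratum, and both $f\circ\mu_t$ and $\dot\psi_t$ are continuous in $z$ by the Lipschitz continuity of $\mu_t$ established in Theorem \ref{LIP}; hence the identity $\dot\psi_t(z)=-f(\mu_t(z))$ propagates from the open chambers across $\mu_t^{-1}(\ccal)$ by continuity. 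An analogous argument along the divisor at infinity, where $I^z\equiv+\infty$ off the face $\bar F$ and the supremum is effectively taken over $\bar F$ using $\phi_F$ and $u_F$, completes the proof on all of $M$.
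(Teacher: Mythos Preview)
Your proof is correct and follows essentially the same envelope-theorem argument as the paper: differentiate the optimized expression in $t$ and use the first-order optimality condition at the maximizer $x_t(\rho)=\mu_t(z)$ to kill the terms involving $\dot\mu_t$. The paper works directly with the Legendre-transform form $\psi_t(\rho)+\phip(\rho)=\langle\rho,x_t(\rho)\rangle-(u_0+tf)(x_t(\rho))$ on the open orbit (and its face analogues), which sidesteps the additive $tR$ you had to track; but this is only a cosmetic difference, and your observation that the paper's two formulae for $\psi_t$ differ by $tR$ is accurate. Your additional treatment of the corner set via continuity and the explicit chamber-by-chamber check goes beyond the paper's brief proof, which simply asserts the analogous formula on each face.
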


\begin{proof} We recall that on   the open real orbit,
 $\psi_t(\rho) = \langle \rho, x_t(\rho) \rangle -  (u_0 + t
 f)(x_t(\rho))$ where $x_t(\rho)$ solves $\rho = \nabla (u_0 + t
 f)(x_t(\rho))$. Hence,
$$\begin{array}{lll} \dot{\psi}_t(\rho) && =  - f(\mu_t(\rho))
+ \left(\langle \rho,  \frac{d}{dt} x_t(\rho) \rangle - \nabla (u
+ t
  f)(x_t(\rho)) \frac{d}{dt} x_t(\rho) \right), \end{array}$$
  and the parenthetical expression vanishes. There are analogous
  restricted expressions on $\mu^{-1}(F)$ for any boundary facet
  $F$, confirming that the identity holds for all $z \in M$.

\end{proof}

\subsection{\bf{$\psi_t \in C^{1,1}([0, L] \times M)$}}

In this section, we  complete the proof of the regularity
statement in Theorem \ref{GEO}. We do this in two steps to
separate  interior from boundary estimates: first we prove that
$\mu_t$ is Lipschitz and then we prove that $d\psi_t$ is
Lipschitz. The latter improves the former in terms of behavior
along $\dcal$.

\begin{prop} \label{MUISLIP} $\mu_t(z)$ is Lipschitz uniformly in $(t, z) \in [0,1]\times X$.

\end{prop}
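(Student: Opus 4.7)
The plan is to use the subdifferential characterization of $\mu_t$ on the open orbit, invoke strong convexity of $u_0$ together with monotonicity of $\partial f$ to obtain the joint Lipschitz estimate directly, and then extend to all of $M$ via the exponentiated lifted moment map of \S\ref{MMSUBDIF}.

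First I would work on $M^o$, where $x := \mu_t(e^{\rho/2 + i\theta})$ is characterized by $\rho \in \nabla u_0(x) + t\,\partial f(x)$. Given two pairs $(\rho_i, t_i) \in \R^m \times [0,1]$, pick $\xi_i \in \partial f(x_i)$ with $\rho_i = \nabla u_0(x_i) + t_i \xi_i$. The strong convexity $\nabla^2 u_0 \geq cI$ on $P$ (as in the proof of the final lemma of \S\ref{MMSUBDIF}), the monotonicity $\langle \xi_1 - \xi_2, x_1 - x_2\rangle \geq 0$ of the convex subdifferential of $f$, and the uniform bound $C := \sup_{x \in P,\, \xi \in \partial f(x)} |\xi| < \infty$ (finite because $f$ is piecewise linear on the compact polytope) combine after pairing with $x_1 - x_2$ to give
\begin{equation*}
\langle \rho_1 - \rho_2,\, x_1 - x_2\rangle \;\geq\; c\,|x_1 - x_2|^2 - C\,|t_1 - t_2|\,|x_1 - x_2|,
\end{equation*}
and Cauchy--Schwarz then yields the joint bound $|x_1 - x_2| \leq \tfrac{1}{c}(|\rho_1 - \rho_2| + C|t_1 - t_2|)$ on $M^o$, uniformly for $t_i \in [0,1]$.

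Next I would extend this to the full toric variety. On each subtoric variety $M_F$, the restriction $\mu_t|_{M_F}$ is the subdifferential inverse of $u_F + t f|_F$, and $u_F$ is strongly convex on $F$, so the identical argument delivers the joint Lipschitz bound in the slice--orbit coordinate $\rho''$, uniform in $t$. To patch the face estimates together with the open-orbit estimate across $\dcal$, I would use Proposition \ref{ECALF} and Corollary \ref{LIFTEDMMa}: in an affine chart $U_v$, the exponentiated subdifferential $\ecal\,\partial(u_0 + tf)$ extends continuously as a manifold with corners up to $\partial P$, and the exponentiated lifted moment map $\tilde{\mu}_t$ is a homeomorphism onto it. In the affine coordinate $\eta = e^{\rho/2}$, the $t$-dependent piece on each smooth chamber reduces to the smooth diffeomorphism $\eta \mapsto e^{-t\Gamma\nu_j/2}\eta$ of Proposition \ref{MUT}, whose derivatives in $(t,\eta)$ are uniformly bounded for $t \in [0,1]$; the corresponding statement on the corner bridges $e^{t\xi} L_{jk}$ follows from $\xi \in CH(\nu_j,\nu_k)$ being compact. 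Since the partition of $M$ into these pieces is finite and $M$ is compact, the individual Lipschitz constants combine into a single global constant.

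I expect the main obstacle to lie in the boundary analysis: a priori the modulus $c = c_F$ of strong convexity and the base of the $\rho$-coordinates both change as one passes from $M^o$ to a face $M_F$, so one must ensure that the individual Lipschitz constants do not blow up as $\mu_t(z)$ approaches $\partial P$. This is exactly what is handled by passing to the exponentiated picture in Proposition \ref{ECALF}, where the logarithmic singularities of $\nabla u_0$ are regularized to smooth boundary data; once one records that $\ecal\,\partial(u_0 + tf)$ varies $C^0$-Lipschitz continuously in $t$ up to the corners, the joint Lipschitz estimate at interior points extends by continuity to all of $[0,1]\times M$.
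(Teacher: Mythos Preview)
Your core argument is correct and takes a genuinely different route from the paper. The paper proceeds by explicit case analysis: on the smooth chambers $\mu_{t_0}^{-1}(P_j^\circ)$ it differentiates the relation $\nabla(u_0+tf)(x_t)=\rho$ in $t$ to obtain $\frac{d}{dt}\mu_t(z) = -(D^2 u_0(x_t))^{-1}\nabla f(x_t)$, which is bounded; on the interior of the corner regions it observes that $\mu_t(z)$ is locally constant in $t$; on $\dcal$ it restricts to the subtoric variety and repeats. Your monotone-operator argument bypasses this decomposition entirely: pairing $\rho_1-\rho_2$ with $x_1-x_2$ and using strong convexity of $u_0$ together with monotonicity and boundedness of $\partial f$ yields the joint estimate $|x_1-x_2| \le c^{-1}(|\rho_1-\rho_2| + C|t_1-t_2|)$ in one stroke, with no need to distinguish smooth chambers from corner bridges.

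One simplification you should record: for the $t$-Lipschitz direction, specializing your inequality to $\rho_1=\rho_2$ already gives $|\mu_{t_1}(z)-\mu_{t_2}(z)| \le (C/c)\,|t_1-t_2|$ uniformly for every $z\in M^o$, and since $\mu_t$ is continuous on $M$ (Theorem~\ref{LIP}) and $M^o$ is dense, this bound extends to all of $M$ by continuity. Hence the boundary discussion you flag as the ``main obstacle'' is in fact unnecessary for this proposition: the $z$-Lipschitz direction is already supplied by Theorem~\ref{LIP}, exactly the reduction invoked in the paper's first sentence. What the paper's computation buys is the explicit formula for $\frac{d}{dt}\mu_t$ on each chamber; what your approach buys is brevity and immediate generalization to any convex perturbation $f$ with bounded subdifferential, not just piecewise-linear ones.
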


\begin{proof} In view of Proposition \ref{LIP}, we only
 have to verify that $\mu_t$ is uniformly Lipschitz in $t$. Fix $t=t_0\geq 0$ and $z_0\in X$.

\begin{enumerate}

\item Suppose $z_0 \in (\mathbf{C}^*)^n$, $\rho_0  \in \mu_{t_0}^{-1} ( P_j^{\circ})$ and $x_t = \mu_t(z)$. Then $\mu_t$ is smooth in both $\rho$ and $t$ near $\rho_0$ and $t_0$. In fact,
    $\nabla (u + t f)(x_t) = \rho$ and
$D^2  (u_0 + t f)(x_t) \cdot \dot{x} = - \nabla f(x_t)$ after taking $t$-derivative. Therefore
     $$\frac{d}{dt} \mu_t(z) = -
\left(D^2 (u_0 + t f)(x_t) \right)^{-1} \nabla f(x_t) =
\left(D^2 (u_0 )(x_t) \right)^{-1} \nabla f(x_t)  $$
and so $\frac{d}{dt}\mu_t (z_0)$ at $t=t_0$ is uniformly bounded in $\mu_{t_0}^{-1}( \cup_j P_j^{\circ})$.

\item Suppose $z_0 \in (\mathbf{C}^*)^n$ and $\rho_0 \in \mathbf{R}^n \backslash \overline{ \mu_{t_0}^{-1} (\cup_j P_j^{\circ})}$. Then $\rho_0\in V_{x_{t_0}, t_0}$ and $\mu_t (z_0) = \mu_{t_0}$ for $t$ sufficiently close to $t_0$. Hence at $t=t_0$
    $$ \frac{d}{dt} \mu_t (z_0) = 0.$$

\item Suppose $z_0 \in (\mathbf{C}^*)^n$ and $\rho_0 \in \partial ( \mu_{t_0}^{-1} (\cup_j P_j^{\circ}))$. One sided derivatives of $\mu_t(z_0)$ exist and fall into the above cases at $t=t_0$.

\item Suppose $z_0 \in \mathcal{D}$. One only has to restrict $\mu_t$ on the subtoric variety and repeat the above argument.

\end{enumerate}

\end{proof}

We now complete the proof that $\psi_t \in C^{1,1}$ by proving
that
 $d \psi_t$ is Lipschitz.  Let us clarify first what
this adds to the statement that $\mu_t$ is Lipschitz:  In terms of
the basis $\{\frac{\partial}{\partial \theta_j}\}$ of $L \T$, we
have (cf. (\ref{MMDEF})) that
$$\mu_t (z) = (d (\phip + \psi_t)(\frac{\partial}{\partial
\rho_1}), \cdots, d (\phip + \psi_t)  ( \frac{\partial}{\partial
\rho_m})). $$ It  follows that $d \psi_t$ is continuous in
directions spanned by $\frac{\partial}{\partial \rho_j},
\frac{\partial}{\partial \theta_k}$. However, at $z_0 \in \dcal$,
some of these vector fields  vanish, namely those in the
infinitesimal isotropy group of $z_0$.  To show that $d \psi_t$ is
a Lipschitz one-form we need to show that $d \psi_t(X)$ is
Lipschitz when $X$ is a smooth non-vanishing vector field at $z_0
\in \dcal$. With no loss of generality, we may straighten out the
corner in which $\mu(z_0)$ lies and  hence that $\mu(z_0)$ lies in
a face of the corner at $0$ of the standard orthant. We use the
affine coordinates $z_j$ on $M$ adapted to $0$ and put $r_j =
|z_j|$. Then, $\frac{\partial}{\partial \rho_j} = \frac{1}{2} r_j
\frac{\partial}{\partial r_j}$ for $j = 1, \dots, m$. We assume
that $z_0$ lies in the facet of $\dcal$ defined by
 $r_1 = \cdots = r_k = 0$. Then,
 \begin{equation} d \psi_t (\frac{\partial}{\partial r_j}) =
 \frac{1}{r_j}  \mu_{t j} - d \phi_{P^o}
 (\frac{\partial}{\partial r_j}), \end{equation}
 where $\mu_{t j}$ is the $j$th component of $\mu_t$.
Thus, it suffices   to check that $\frac{1}{r_{\ell}} \mu_{t \ell}
(r)$ is Lipschitz at $r_1 = \cdots = r_k = 0$ for all $\ell = 1,
\dots, k$ , i.e. that   $\frac{1}{r_{\ell}} d \mu_{t {\ell}} (r)
\in L^{\infty}$. The Lipschitz property is thus equivalent to

\begin{enumerate}

\item  $\frac{1}{ r_{\ell}^2 } \mu_{t \ell} (r) \in L^{\infty}$,
and

\medskip

\item $ \frac{1}{r_{\ell}} d
 \mu_{t \ell} (r) \in L^{\infty}$ for each
$j$.

\end{enumerate}

\begin{prop} The 1-form $d\psi_t$ is Lipschitz continuous on $M$.
\end{prop}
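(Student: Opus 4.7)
The plan is to establish the two pointwise estimates just stated, namely $r_\ell^{-2}\mu_{t\ell}(r)\in L^\infty$ and $r_\ell^{-1}d\mu_{t\ell}(r)\in L^\infty$ for each $\ell = 1,\dots,k$, in a neighborhood of a boundary point $z_0\in\dcal$ with $\mu_0(z_0)$ in the facet $\{r_1=\cdots=r_k=0\}$ of the straightened vertex chart $U_v\simeq\C^m$. Combined with Proposition \ref{MUISLIP} (Lipschitz continuity of $\mu_t$ in $(t,z)$ on $M^o$ and on each facet), these estimates extend the Lipschitz property of $d\psi_t = d\phi_t^{U_v} - d\phi_{U_v}$ across $\dcal$, where $d\phi_{U_v}$ is smooth in the chart by (\ref{PHIV}).

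First I would observe the baseline smooth fact: $\mu_{0\ell}(r)/r_\ell^2$ is a smooth bounded function on $U_v$. Indeed $\mu_{0\ell}$ is the Hamiltonian of $\partial/\partial\theta_\ell = \tfrac12\, r_\ell\,\partial/\partial\theta_\ell^{\text{loc}}$ normalized so that it vanishes on the facet $\{r_\ell=0\}$, and a direct computation from the formula $\phi_{U_v}(\eta)=\log\sum_{\alpha\in P}|\eta^{\tilde\Gamma\alpha}|^2$ shows $\mu_{0\ell}(r) = r_\ell^2\,g_\ell(r)$ for a positive smooth function $g_\ell$ on $U_v$ (this is the standard behavior of the moment map at a toric divisor, cf. the Fubini--Study illustration in \S\ref{BACKGROUND}). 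Thus both estimates (1) and (2) are trivially true at $t=0$.

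Next I would transfer these estimates to $\mu_t$ via the explicit formulas of Propositions \ref{MUT} and \ref{MUTa}. On a smooth chamber $z\in e^{t\nu_j}\mu_0^{-1}(P_j)$ we have $\mu_t(z)=\mu_0(e^{-t\nu_j}z)$, which in affine coordinates reads
\[
\mu_{t\ell}(r)=\mu_{0\ell}\!\left(e^{-t\nu_{j,1}/2}r_1,\dots,e^{-t\nu_{j,m}/2}r_m\right)=e^{-t\nu_{j,\ell}}\,r_\ell^2\,g_\ell\!\left(e^{-t\nu_j/2}r\right),
\]
so $r_\ell^{-2}\mu_{t\ell}(r)$ is smooth and bounded uniformly for $t\in[0,L]$, giving (1); differentiating and observing that each $r_q$-derivative of $g_\ell\!\left(e^{-t\nu_j/2}r\right)$ is smooth while the $r_\ell^2$ factor contributes a single $r_\ell$ to $d\mu_{t\ell}$ in the $r_\ell$-direction (and two factors of $r_\ell$ elsewhere), yields (2). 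On the corner region $\bigcup_{\xi\in CH(\nu_j,\nu_k)}e^{t\xi}L_{jk}$, Proposition \ref{MUTa} gives $\mu_t=\mu_0\circ\pi_t^{jk}$ where the fibration $\pi_t^{jk}\colon e^{t\xi}w\mapsto w$ is a $(\R_+^*)^m$-orbit projection. Since $\pi_t^{jk}$ again multiplies each $r_q$ by a bounded positive factor and fixes the hyperplane $\{r_\ell=0\}$ setwise, the same estimates (1)--(2) follow; moreover $\mu_t$ is constant along the $\xi$-fibers, so the derivative in those directions simply vanishes.

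Finally I would check compatibility across chamber boundaries: by the piecewise-smooth Lagrangian structure of $\gcal\partial(u_0+tf)$ (Proposition \ref{LIP}, Corollaries \ref{LIFTEDMM}, \ref{LIFTEDMMa}), the smooth-chamber and corner-region formulas for $\mu_t$ agree along their common boundary $\bigcup_{\xi\in\partial CH(\nu_j,\nu_k)}e^{t\xi}L_{jk}$, and $\mu_t$ is globally Lipschitz by Proposition \ref{MUISLIP}; hence the bounds (1) and (2) obtained on each piece patch together to give uniform bounds on a neighborhood of $z_0$. The main obstacle is the corner region, where $\mu_t$ is degenerate and the formula $\mu_t=\mu_0\circ\pi_t^{jk}$ must be shown both to preserve the quadratic vanishing at $\{r_\ell=0\}$ and to be compatible across the transition from one chamber to another; however, the degeneracy along the $\xi$-directions helps rather than hurts, as it eliminates potentially singular derivatives, and the only nontrivial content reduces to the smooth behavior of $\mu_0$ restricted to $L_{jk}$, already controlled by $g_\ell$. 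Uniformity in $t\in[0,L]$ is automatic since all multiplicative factors $e^{-t\nu_{j,\ell}}, e^{-t\xi_\ell}$ are bounded above and below on compact $t$-intervals.
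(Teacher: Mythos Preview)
Your overall plan---split into smooth chambers and corner regions, use the explicit formulas of Propositions \ref{MUT} and \ref{MUTa}, and verify the two pointwise estimates (1)--(2)---matches the paper exactly, and your treatment of the smooth chambers is correct and essentially the same as the paper's Lemma \ref{SMOOTH}.

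The gap is in the corner region $U_{jk}=\bigcup_{\xi\in CH(\nu_j,\nu_k)}e^{t\xi}L_{jk}$. You write that ``$\pi_t^{jk}$ again multiplies each $r_q$ by a bounded positive factor,'' and conclude (1)--(2). That assertion is only a \emph{pointwise} statement: for $z=e^{t\xi}w$ one has $r_q(\pi_t(z))/r_q(z)=e^{-t\xi_q/2}$, bounded because $\xi$ ranges over a compact set. This suffices for estimate (1). But for estimate (2) you need $r_\ell^{-1}d\mu_{t\ell}$ bounded, hence $D\pi_t^{jk}$ bounded, and $\pi_t^{jk}$ is \emph{not} a diagonal dilation: the parameter $\xi=\xi(z)$ depends on $z$, so $D\pi_t$ contains a term $-t\,D\xi$. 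Controlling $D\xi$ amounts to showing that the fiber direction $\nu_j-\nu_k$ stays uniformly transverse to the hypersurfaces $e^{t\xi}L_{jk}$, including near $\dcal$ where the metric in $\rho$-coordinates degenerates. Your remark that ``the degeneracy along the $\xi$-directions helps'' disposes only of the fiber component; the transverse component is where the work lies.

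This is exactly the content of the paper's Lemma \ref{CORN}. There the argument is: write $\mu_t=\mu_0\circ\pi_t$, use the ratio bound (your observation) to reduce to boundedness of $D\pi_t$, then observe that $L_{jk}$ is a level set of $I_{jk}(z)=\langle\nu_j-\nu_k,\mu_0(z)\rangle$ whose $g_{\omega_0}$-gradient flow is precisely the one-parameter group $e^{\sigma(\nu_j-\nu_k)}$. After peeling off a fixed translation $e^{t\nu_k}$, the foliation by $e^{ts(\nu_j-\nu_k)}L_{jk}$ consists of level sets of $I_{jk}$, orthogonal to the fiber direction in a metric uniformly equivalent to $g_{\omega_0}$; hence $D\tilde\pi_t$ restricted to each leaf is the inverse of translation by a group element in a compact subset of $\R_+^m$, and is uniformly bounded. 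You should supply this (or an equivalent) transversality argument; without it the corner-region step is incomplete.
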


\begin{proof}

We have explicit formulae for $\mu_t$ away from a codimension
subvariety of $M$. These formulae are sufficient by the following

\begin{lem}\label{HYPER}  Let $X$ be a compact K\"ahler manifold. Let $Y$ be
 a subvariety of $X$ and $g$ a fixed K\"ahler metric on $X$. If
  $\mu \in C^0(X)$ and $|d \mu|_g \leq C$ on $X\backslash Y$ for a uniform constant $C>0$,
  then $\mu$ is Lipschitz on $X$.

\end{lem}

\begin{proof} Let $z_1$, $z_2$ be two arbitrary points on
$X$. Let $z_{1,\epsilon}\in B_{\epsilon}(z_1)\cap (X\backslash Y)$
and $z_{2,\epsilon}\in B_{\epsilon}(z_2) \cap (X\backslash Y)$. Let
$\gamma_{\epsilon}$ be the shortest geodesic joining $z_{1, \epsilon}$
and $z_{2, \epsilon}$ on $X$. Without loss of generality, we can assume
 that $\gamma_{\epsilon}$ only intersects with $Y$ by finite points,
 say, $w_1$, ... , $w_k$. Let $w_0= z_{1, \epsilon}$ and $w_{k+1} = z_{2, \epsilon}$. Then

\begin{equation}
 |\mu(z_{1, \epsilon}) - \mu (z_{2, \epsilon}) | \leq  \sum_{j=0}^{k} |\mu(w_j) - \mu(w_{j+1})|
 \leq  C \sum_j | w_j - w_{j+1}|_g \leq  C|\gamma_{\epsilon}|_g.
\end{equation}

\end{proof}
Thus it suffices to have uniform bounds for (1) and (2) on the
open sets where we have explicit formulae. We first prove uniform
bounds on the smooth domains.

\begin{lem} \label{SMOOTH} With the above notation,  $ d  \mu_{t\ell} $ satisfies the bounds (1)-(2) on
the sets  $ e^{t \nu_j} \mu_0^{-1}(P_j)$ for each $j$. \end{lem}

\begin{proof}

  By
Proposition \ref{MUT}, on the set $ e^{t \nu_j} \mu_0^{-1}(P_j)$
we have $\mu_{t, j} (z) = \mu_0(e^{- t \nu_j} z) $.  Properties
(1) and (2) above follow immediately because any smooth moment map
$\mu_0$ satisfies these estimates, as may be seen from   the fact
that $\nabla^2 u_0$ has first order poles on the boundary facets.

\end{proof}

We now prove the bounds in the   the complementary  set   $ M
\backslash \left(\bigcup_{j = 1}^R e^{t \nu_j}
\mu_0^{-1}(P_j)\right) $. By Proposition \ref{MUTa} it suffices to
verify the bounds on each set $e^{t \xi} L_{j k}$ with $\xi \in
CH(\nu_j, \nu_k)$.

\begin{lem} \label{CORN}  for each $j, k$ $D \mu_t$ satisfies the bounds (1)-(2)
in each set $U_{j k}: = \{e^{t \xi} L_{j k}, \;\; \xi \in
CH(\nu_j, \nu_k)\}$.
\end{lem}

\begin{proof}


By Proposition \ref{MUTa}, we have $\mu_t(z) = \mu_0(\pi_t(z))$
where $\pi_t$ is the fiber map from $U_{j k} \to L_{j k}$ with
fibers the orbits of $e^{t \xi}$. Hence, for each $\ell$,
\begin{equation} \label{ONE}  \frac{1}{r_{\ell}(z)} d \mu_t (z) =
\frac{1}{r_{\ell}(z)} (d \mu_{0 \ell}) (\pi_t(z)) \circ D
\pi_t(z). \end{equation}  We now observe that for any compact set
$K$ of $\nu \in \R^m$, there exists $C_K \in \R_+$ so that
\begin{equation} \label{CK}  \frac{r_j(e^{\nu} z)}{r_j(z)} \leq C_K.
\end{equation}
Indeed, if we write $z = e^{\rho/2}$ then  $r_j(z) =
e^{\frac{\rho_j}{2}}$ and $\frac{r_j(e^{\nu} z)}{r_j(z)} =
e^{\nu_j}.$ It follows that
\begin{equation} \label{TWO} \begin{array}{lll}  \| \frac{1}{r_{\ell}(z)} d \mu_t (z)\|
& \leq & C_k \;  \frac{1}{r_{\ell}(\pi_t(z))} \|(d \mu_{0 \ell})
(\pi_t(z)) \circ D \pi_t(z) \|. \end{array} \end{equation} To
complete the proof of the Lemma, we need to show that

\begin{itemize}

\item $\frac{1}{r_{\ell}(w))} d \mu_{0 \ell} (w) \in
L^{\infty}(L_{j k});$
\medskip

\item $D \pi_t(z) \in L^{\infty}(U_{j k}). $
\end{itemize}

The first statement holds in a neighborhood of the facet $r_{\ell}
= 0$ of $\dcal$ and hence holds on its intersection with  $L_{j
k}$; the statement reduces to Lemma \ref{MUTLIP} away from this
open set.

Hence the key issue is to prove the boundedness of $D \pi_t$ in
$U_{j k}$. We first note that  $L_{j k}$ is the  level set $I_{j
k} = v_k - v_j$ of the function \begin{equation} \label{IJK} I_{j
k} (z) : = \langle \nu_j - \nu_k, \mu_0(z) \rangle. \end{equation}
Furthermore the gradient flow of this function with respect to
$\omega_0$ is given by the subgroup $\sigma \to e^{\sigma (\nu_j -
\nu_k)}$ of the $\R_+^m$ action. Indeed, the latter is the joint
action of the gradient flows $\nabla I_j$  of the action variables
$I_j$ which form the components of the moment map $\mu_0: M \to P,
$ i.e. $\mu_0 = (I_1, \dots, I_m)$. This holds because the
Hamilton vector field $H_{I_j}$ with respect to $\omega_0$ equals
$\frac{\partial}{\partial \theta_j}$ and its image under the
complex structure $J$ equals both $\frac{\partial}{\partial
\rho_j}$ and $\nabla I_j$ where $\nabla$ is the gradient for the
metric $g_{\omega}(X, Y) = \omega(J X, Y)$. Thus,  it follows
directly from (\ref{IJK}) that the gradient flow of $I_{j k}$ is
$\sigma \to e^{\sigma (\nu_j - \nu_k)}$.

Now $\xi \in CH(\nu_j, \nu_k)$  has the form $\xi = \nu_k + s
(\nu_j - \nu_k)$ and so $e^{t \xi}  = e^{t \nu_k} e^{t s (\nu_j -
\nu_k)}. $  Thus, the family of hypersurfaces $e^{t \xi} L_{j k}$
for $\xi \in  CH(\nu_j, \nu_k)$  is the image under $e^{t \nu_k}$
of the family of hypersurfaces  $e^{t s (\nu_j - \nu_k)} L_{j k}$,
and the latter is a family of level sets of $I_{j k}$. In
particular, for each $t$ and $s$, the latter family is orthogonal to the flow lines of $e^{ts(\nu_j -\nu_k)}$ at $e^{ts(\nu_j-\nu_k)}z \in M$ with respect to the K\"ahler metric $(e^{ts(\nu_j-\nu_k)})^* g_{\omega}(z) = g_{\omega} ( e^{ts(\nu_j-\nu_k)/2} z )$. This is because $e^{ts(\nu_j -\nu_k)} L_{jk}$ is the level set given by $$ I_{jk}(z; t, s) = \langle \nu_j - \nu_k, \mu_0 ( e^{ts(\nu_j - \nu_k)/2} z) \rangle.$$

Moreover, $g_{\omega} ( e^{ts(\nu_j-\nu_k)} \cdot )$ is equivalent to $g_{\omega} (  \cdot )$ and it can  be seen in $\mathbf{R}^n$. Suppose $$ I_{jk}(\rho; t, s) = \langle \nu_j - \nu_k, \mu_0 ( \rho+ ts (\nu_j -\nu_k) ) \rangle.$$ Hence

$$ \nabla_{\rho} I_{jk}( \rho; t, s) =\nabla^2_{\rho} \varphi_0 (\rho+ ts(\nu_j -\nu_k))\cdot(\nu_j - \nu_k) $$
and so under the Riemannian metric $$G_{ts} (\rho) = G_0 (\rho+ ts(\nu_j -\nu_k)) = \sum_{p,q}\frac{\partial^2 \varphi_0}{\partial \rho_p \partial \rho_q} \varphi_0 (\rho+ts(\nu_j -\nu_k)) d\rho_p \otimes d\rho_q$$ on $\mathbf{R}^n$, and $\nu_j -\nu_k$ is orthogonal to the hypersurface. It is obvious that $G_{ts}$ and $G_0$ are uniformly equivalent.

It is convenient to slightly modify our problem by removing $e^{t
\nu_k}$. In  the notation of (\ref{PARAM}), we  change $M_t$ to
$\tilde{M}_t: L_{jk}  \times [0, 1] \to M,$ where $ \;
\tilde{M}_t(z, s) : = e^{ st (\nu_j - \nu_k)/2} z. $ Thus, $M_t (z,
\nu_k + s (\nu_j - \nu_k)) = e^{\nu_k t} \tilde{M}_t(z, s). $ We
then define $\tilde{\pi}_t: e^{- t \nu_l} U_{j k}  \to L_{jk}$ by
$\tilde{\pi}_t \tilde{M}_t(z, s) = z. $ Since $\pi_t(w) =
\tilde{\pi}_t(e^{- \nu_k t/2} w)$,  $D \pi_t $ is bounded on $U_{j
k}$ if and only if $D \tilde{\pi}_t$ is bounded on $e^{- t \nu_k}
U_{jk}$.

To prove that $D \tilde{\pi}_t$ is bounded on $e^{- t \nu_k}
U_{jk}$, we observe that the gradient flow of $I_{j k}$ at fixed
$s, t$  takes $L_{j k}$ to another level set of $I_{j k}$ and
hence one has orthogonal foliations of $e^{- t \nu_k} U_{jk}$
given by level sets and gradient lines of $I_{j k}$. We note that
the  critical points of $I_{j k}$ occur only  on $\dcal$ and by
Lemma \ref{HYPER} it suffices to bound $D \tilde{\pi}_t$ on its
complement. We may thus  split the tangent space at each point of
$ e^{- t \nu_k} U_{jk}$ into $\R \nabla I_{j k} \oplus T \{I_{j k}
= C\}$. Since $D \tilde{\pi}_t = 0$ on $\R \nabla I_{j k}$ it then
suffices to bound $D \left( \tilde{\pi}_t |_{\{I_{j k} = C\}}
\right) $ uniformly in $C$ as $C$ runs over the levels in $e^{- t
\nu_k} U_{jk}$. But $\tilde{\pi}_t |_{\{I_{j k} = C\}}$ is simply
the inverse of the map  $z \in L_{j k}  \to e^{ t s(C) (\nu_j -
\nu_k)}$ where $t s(C)$ is the parameter time of the gradient flow
from $I_{j k } = v_k - v_j$ to the level set $I_{j k} = C$. Hence
$D \tilde{\pi}_t$ is bounded above as long as the  derivatives of
the family of maps $z \to  e^{ t s(C) (\nu_j - \nu_k)/2} z$ on
$L_{jk}$ have a uniform lower bound. But this is clear since this
family forms a compact subset of the group $\R_+^m$.

This concludes the proof of the Lemma.

\end{proof}

Lemmas \ref{SMOOTH} and \ref{CORN} imply that $d\psi_t$ is
Liptschitz, concluding the proof of the Proposition.

\end{proof}

Finally, we consider $t$ derivatives:

\begin{prop} $\dot{\psi_t}$ is Lipschitz on  $([0,1] \times X ).$

\end{prop}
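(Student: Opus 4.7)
The plan is to reduce the claim to the joint Lipschitz continuity of $\mu_t$, which has essentially already been established. By Proposition \ref{PHIDOT}, $\dot\psi(t,z) = -f(\mu_t(z))$, so it suffices to show that the map $(t,z) \mapsto f(\mu_t(z))$ is Lipschitz on $[0,1] \times M$. Since $f = \max\{\lambda_1,\dots,\lambda_p\}$ is the maximum of finitely many affine functions, it is globally Lipschitz on the compact polytope $P$ with constant $\mathrm{Lip}(f) = \max_j |\nu_j|$. The entire problem therefore collapses to showing that $\mu_t(z)$ is Lipschitz on $[0,1]\times M$ jointly in $(t,z)$.

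The spatial Lipschitz bound $|\mu_t(z_1) - \mu_t(z_2)| \le C\, d(z_1,z_2)$, uniform in $t \in [0,1]$, is exactly the content of Proposition \ref{MUISLIP} combined with Lemma \ref{MUTLIP} (the proof in the preceding subsection produces uniform bounds on the spatial gradient of $\mu_t$ away from a subvariety, then invokes Lemma \ref{HYPER} to conclude). For the $t$-direction, the computation inside the proof of Proposition \ref{MUISLIP} shows that, on each open region $e^{t\nu_j}\mu_0^{-1}(P_j^\circ)$, $\tfrac{d}{dt}\mu_t(z) = (D^2 u_0(x_t))^{-1} \nabla f(x_t)$ which is uniformly bounded (because $D^2 u_0$ is bounded below by the strong convexity argument used earlier), while on $\mu_t^{-1}(\mathcal{C})$ the time derivative vanishes. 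Hence $|\mu_t(z) - \mu_s(z)| \le C|t-s|$ uniformly in $z$, first on the complement of the exceptional subvariety and then, via Lemma \ref{HYPER} applied now in the $t$-variable at fixed $z$, on all of $M$.

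Combining the two one-variable bounds gives
\begin{equation*}
|\mu_t(z_1) - \mu_s(z_2)| \le |\mu_t(z_1) - \mu_t(z_2)| + |\mu_t(z_2) - \mu_s(z_2)| \le C\bigl( d(z_1,z_2) + |t-s|\bigr),
\end{equation*}
so $\mu$ is Lipschitz on $[0,1]\times M$. Composing with the Lipschitz function $f$ on $P$ gives
\begin{equation*}
|\dot\psi(t,z_1) - \dot\psi(s,z_2)| = |f(\mu_t(z_1)) - f(\mu_s(z_2))| \le \mathrm{Lip}(f)\cdot C \bigl( d(z_1,z_2) + |t-s|\bigr),
\end{equation*}
which is the desired conclusion.

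The main obstacle I anticipate is the verification of uniform behavior across the divisor at infinity $\mathcal{D}$ and across the corner locus $\mu_t^{-1}(\mathcal C)$, where the explicit formulas for $\mu_t$ degenerate. As in the preceding proof, this is handled by restricting to the subtoric varieties $M_F$ and using the corresponding symplectic potentials $u_F$, which inherit strong convexity; together with Lemma \ref{HYPER}, which upgrades Lipschitz bounds off a subvariety to Lipschitz bounds on all of $M$, this should complete the argument without any genuinely new analytic input.
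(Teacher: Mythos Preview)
Your proposal is correct and follows essentially the same route as the paper: both invoke Proposition \ref{PHIDOT} to write $\dot\psi_t=-f(\mu_t(z))$, then use that $f$ is Lipschitz on $P$ together with the uniform Lipschitz continuity of $(t,z)\mapsto\mu_t(z)$ established in Proposition \ref{MUISLIP} (whose proof already contains the bound on $\tfrac{d}{dt}\mu_t$ that you quote). Your write-up is slightly more explicit about the triangle-inequality reduction to separate $t$- and $z$-Lipschitz bounds, but the argument and the cited inputs coincide with the paper's.
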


\begin{proof}

By Proposition \ref{PHIDOT}, and the fact that
 both $f$ and $\mu_t$ are continuous, it follows that  $\psi_t \in C^1( [0,
T]\times X)$. Obviously, $\dot{\psi_t}$ is smooth outside a
subvariety of $[0, T]\times X$ and so it suffices to check the
uniform Lipschitz condition for $f (\mu_t(\rho))$ in both $z$ and
$t$ variables. In the $z$ variables it follows from Proposition
\ref{MUISLIP} and the fact that  $f$ is  Lipschitz.

In the $t$ variable, we note that $\frac{\partial}{\partial t}
f(\mu_t(\rho)) = \nu_i \cdot \frac{\partial}{\partial t} \mu_t
(\rho)$, and this is bounded by Proposition \ref{MUISLIP}.
\end{proof}

\begin{rem}

We note that the geodesic equation
$$\partial_t^2 {\phi}_t = |\partial_z \dot{\phi}_t|^2_{\omega_t} $$
is valid in a weak sense,  although both sides are discontinuous,
hence $\psi_t$  is  a weak solution of the geodesic equation, or
equivalently of the Monge-Amp\`ere equation $(\ddbar \Phi)^{m+1} =
0$ where $\Phi(t + i \tau, z) = \psi_t(z)$ (cf. \cite{S,D1} for
the relation of the geodesic equation and the Monge-Amp\`ere
equation). Since the Monge-Amp\`ere measure is $\T$-invariant, it
is equivalent that the real Monge-Amp\`ere measure of $\psi_t$ on
$\R \times M_{\R}$ equals zero.   In  the real domain, a weak
solution of the Monge-Amp\`ere equation is  a function $\psi$
whose Monge-Amp\`ere measure $M(\psi)$ equals zero, where the
Monge-Amp\`ere measure is defined by $M(\psi)(E) = \left|
\partial \psi(E) \right|$, i.e. by the Lebesgue measure of the
image of a Borel set $E$ under the subdifferential map of $\psi$
(see e.g. \cite{CY}).

To see that our $\psi_t$ solves the homogeneous real
Monge-Amp\`ere equation, we note that  the image of the gradient
map of $\psi_t$ is the same as the image of the subdifferential
map (in both the $t$ and $x$ variables) of $u + t f$. Since the
latter is linear in $t$, its Monge-Amp\`ere measure in $\R \times
P$ equals zero. We conclude that $(\partial_t^2 {\phi}_t -
|\partial_z \dot{\phi}_t|^2_{\omega_t}) dt dx $ is the zero
measure. It follows that, as measures, $\partial_t^2 {\phi}_t dt
dx = |\partial_z \dot{\phi}_t|^2_{\omega_t} dt dx$.

It follows that  $\partial_t^2 \psi_t \in L^{\infty}$ if and only
if $|\partial_z \dot{\phi}_t|^2_{\omega_t}) \in L^{\infty}$. The
metric norm uses the inverse of $\omega_t$, which as observed
above vanishes on the open sets $\mu(P_i \cap P_j)$. On the other
hand, the formula in Theorem \ref{GEO} shows that $\partial_z
\dot{\phi}_t \equiv 0$ there as well.

\end{rem}

\section{\label{PFC1} $C^1$ convergence: Proof of Proposition \ref{LLNMUZT}}

In this section we prove that $\psi_k(t, z) \to \psi_t$ in $C^1$.
The proof uses the properties of the moment map $\mu_t$
established in previous sections,  and is based on a strong
version of Varadhan's Lemma and on uniform large deviations. It
follows that $\psi_t \in C^1([0, L] \times M)$ as will be
discussed at the end.

\subsection{Uniform tilted large deviations upper bound}

Our aim is  to show: \begin{prop} \label{LDT} For compact sets,
\begin{equation} \frac{1}{k} \log \mu_k^{z,t} (K) \leq - \inf_{x \in K} I^{z, t}(x) + o(1), \end{equation}
where $o(1)$  denotes a quantity such that $o(1) \to 0$ as $k \to
\infty$ uniformly in $z, t$ when $t$ lies in a compact set. Here,
$I^{z,t}(x) = [I^z(x) - F_t(x)] - \sup_{x \in X} [F_t(x) -
I^z(x)]$.
\end{prop}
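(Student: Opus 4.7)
The plan is to derive Proposition \ref{LDT} by viewing $d\mu_k^{z,t}$ as an exponential tilting of $d\mu_k^z$, then combining the pointwise weight estimate of Lemma \ref{IZLOGPCAL} with the uniform Laplace convergence already established in Theorem \ref{C0}. The starting point is the identity
\begin{equation*}
\mu_k^{z,t}(K) \;=\; \frac{\int_K e^{k F_t(x)}\, d\mu_k^z(x)}{\int_P e^{k F_t(x)}\, d\mu_k^z(x)},
\end{equation*}
obtained from \eqref{TIMETILT} by pulling out the factor $\Pi_{h^k}(z,z)$ that appears both in the numerator weights and in the normalizer $Z_k^{t,z}$. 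The claim thus splits into a uniform upper bound on the numerator and a matching uniform lower bound on the denominator.

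For the numerator, I would apply Lemma \ref{IZLOGPCAL} termwise. The lemma gives $\pcal_{h^k}(\alpha,z) = e^{-k I^z(\alpha/k) + O(\log k)}$ uniformly in $z \in M$ and $\alpha \in kP \cap \Z^m$; since there are only $O(k^m)$ lattice points, this yields
\begin{equation*}
\tfrac{1}{k}\log \int_K e^{kF_t(x)}\, d\mu_k^z(x) \;\leq\; \sup_{x \in K}\bigl[F_t(x) - I^z(x)\bigr] + O\!\left(\tfrac{\log k}{k}\right),
\end{equation*}
with remainder uniform in $z \in M$ and in $t$ on any compact interval, using also the uniform Bergman-\szego expansion \eqref{TYZ} and the joint continuity of $F_t(x) = t(R - f(x))$. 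For the denominator, I would invoke the uniform Laplace convergence proved in the course of proving Theorem \ref{C0}, applied to the bounded continuous function $h = -F_t$ on $P$:
\begin{equation*}
\tfrac{1}{k}\log \int_P e^{kF_t(x)}\, d\mu_k^z(x) \;=\; \sup_{x \in P}\bigl[F_t(x) - I^z(x)\bigr] + o(1),
\end{equation*}
uniformly in $(z,t)$.

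Subtracting the two estimates gives
\begin{equation*}
\tfrac{1}{k} \log \mu_k^{z,t}(K) \;\leq\; \sup_K\bigl[F_t - I^z\bigr] \;-\; \sup_P\bigl[F_t - I^z\bigr] + o(1) \;=\; -\inf_{x \in K} I^{z,t}(x) + o(1),
\end{equation*}
where the rate function $I^{z,t}(x) = (I^z(x) - F_t(x)) + \sup_P[F_t - I^z]$ is manifestly nonnegative and vanishes on the minimizers of $I^z - F_t$. No genuine obstacle arises, since the structural inputs---the uniform weight estimate of Lemma \ref{IZLOGPCAL} and the uniform Laplace principle embedded in Theorem \ref{C0}---are already in place. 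The only delicate point is ensuring joint uniformity of the two $o(1)$ terms in $(z,t)$; this follows from the $z$-uniformity of \eqref{TYZ} and Lemma \ref{IZLOGPCAL} together with the uniform continuity of $F_t$ on the compact product $P \times [0,L]$.
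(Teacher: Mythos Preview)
Your proof is correct and follows essentially the same route as the paper. The paper also splits $\mu_k^{z,t}(K)$ into a numerator handled by the pointwise weight estimate and a denominator handled by the uniform Varadhan result of Theorem~\ref{C0}; the only cosmetic difference is that the paper restates Lemma~\ref{IZLOGPCAL} as a tilted version (Lemma~\ref{IZLOGPCALt}) with $F_t$ built in---but since the $F_t(\alpha/k)$ terms cancel in the difference $L_k$, that restatement is content-free, and your direct appeal to Lemma~\ref{IZLOGPCAL} is the cleaner choice.
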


\begin{proof}

  Since the logarithmic asymptotics of the denominator
$Z_k^{z,t}$ in $\mu_k^{z,t}$ follow by the Varadhan's Lemma just
proved, it suffices to show that
\begin{equation} \frac{1}{k} \log \int_K  e^{k F_{t} (x)}
d\mu_k^z(x) \leq - \inf_{x \in K} [F_t(x) - I^z(x)] + o(1), \;\; k
\to \infty,
\end{equation}
where the remainder is uniform in $z \in M$ and $t \in [0, L]$ for
any $L > 0$. We prove this with a slight generalization   of Lemma
\ref{IZLOGPCAL}. The proof is essentially the same, but for the
reader's convenience we include some details.

\begin{lem} \label{IZLOGPCALt}  Let  $L_k(z, t,  \frac{\alpha}{k}) = \frac{1}{k} \log
e^{k F_t(\frac{\alpha}{k})}
\frac{|s_{\alpha}(z)|^2_{h^k}}{Q_{h^k}(\alpha)} - (F_t - I_z)
(\frac{\alpha}{k})$. Then $L_k(z, t,  \frac{\alpha}{k}) =
 - \frac{1}{k} \log Q_{h^k}(\alpha) +
u(\frac{\alpha}{k})$ for $z \in M^o$ and  satisfies
$$L_k(z, t, \frac{\alpha}{k}) = O(\frac{1}{k}), \;\; (k \to \infty) $$
uniformly in $z \in M^o$, $t \in [0, L]$ and $\alpha \in k P$. The
same formula and remainder hold when $\mu_t(z) \in F$ and
$\frac{\alpha}{k} \in F$.

\end{lem}

\begin{proof}

First assume that  $z = e^{\rho/2} \in M^o$. Then,
\begin{equation} \label{ZINT}\begin{array}{lll} \frac{1}{k} \log
e^{k F_t(\frac{\alpha}{k})}
\frac{|s_{\alpha}(z)|^2_{h^k}}{Q_{h^k}(\alpha)} &= &
F_t(\frac{\alpha}{k}) +  \langle \frac{\alpha}{k}, \rho \rangle -
\phi(\rho) - \frac{1}{k} \log Q_{h^k}(\alpha),\end{array}
\end{equation}
while \begin{equation} \label{IZALPHA} (F_t - I^{z})
(\frac{\alpha}{k}) = F_t(\frac{\alpha}{k}) - \langle
\frac{\alpha}{k}, \rho \rangle + u(\frac{\alpha}{k}) +
\phi(\rho).\end{equation}  Hence,
\begin{equation} \label{IZALPHAa}\frac{1}{k} \log
\frac{|s_{\alpha}(z)|^2_{h^k}}{Q_{h^k}(\alpha)} - (F_t -
I^z(\frac{\alpha}{k})) = - \frac{1}{k} \log Q_{h^k}(\alpha) +
u(\frac{\alpha}{k}). \end{equation}  The rest continues as in the
case $t = 0$.
 \end{proof}

The following Lemma, a generalization of Proposition \ref{UNIFUB},
concludes the proof:

\begin{lem}\label{UNIFUBt}  For any compact subset $K \subset \bar{P}$, we have the uniform  upper bound
$$\frac{1}{k} \log \mu_k^{z,t} (K) \leq -  \inf_{x \in K}
I^{z,t} (x) +   O(\frac{\log k}{k}), $$ where the remainder is
uniform in $z$ and $t \in [0, L]$.

\end{lem}

\begin{proof} Taking into account the denominator in $\mu_k^{z,t}$
and Lemma \ref{IZLOGPCALt}, the proof as in Lemma \ref{UNIFUB}
leads to the conclusion,
$$\frac{1}{k} \log \mu_k^{z,t}(K) = \frac{1}{k} \log \sum_{\alpha \in k P: \frac{\alpha}{k} \in K}
e^{- k I^{z,t}(\frac{\alpha}{k})}  + O(\frac{1}{k}) + O(\frac{\log
k}{k}) $$ and the rest of the proof goes as before.
\end{proof}

\end{proof}

 \subsection{Proof of Proposition \ref{LLNMUZT}}

 We first prove:

 \begin{lem}\label{PROPLEM}  $\mu_t(z) = \lim_{k \to \infty} x d\mu_k^{z,t}$ and
 $\mu_k^{z,t} \to \delta_{\mu_t(z)}.$
 \end{lem}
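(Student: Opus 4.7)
The plan is to deduce both statements from the uniform tilted upper bound of Proposition \ref{LDT}, once $\mu_t(z)$ is identified as the unique zero of the tilted rate function
\[
I^{z,t}(x) \;=\; I^z(x) - F_t(x) - \sup_{y\in P}\bigl[F_t(y) - I^z(y)\bigr].
\]
I would first handle the case $z \in M^o$. On the open orbit $I^z(x) = u_0(x) - \langle x,\log|z|\rangle + \phi_{P^o}(z)$ differs from the symplectic potential $u_0$ by an affine (in $x$) term, so $I^z$ is \emph{strictly} convex on $P$ (recall from \S\ref{BACKGROUND} that $u_0$ is strictly convex, being the Legendre transform of the smooth strictly convex $\phi_{P^o}$). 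Because $f$ is convex, $-F_t = t(f-R)$ is convex, so $I^{z,t}$ is strictly convex on $P$ and has a unique minimizer $x_*$ with $I^{z,t}(x_*) = 0$. By the definition $\psi_t(z) = \sup_{x\in P}[F_t - I^z]$ and Proposition \ref{PHITFORM}, this minimizer is exactly $\mu_t(z)$.

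For $z \in \mu_0^{-1}(F^o)$, Proposition \ref{FACE} shows $\mu_k^{z,t}$ is supported on $\bar F$, and $I^z \equiv +\infty$ off $\bar F$ by Proposition \ref{LAMBDAZ*}. The same argument, carried out within $\bar F$ with $u_F$ replacing $u_0$, gives a unique minimizer of $I^{z,t}|_{\bar F}$, again equal to $\mu_t(z)$. The vertex case is immediate from Proposition \ref{FACE}, which forces $\mu_k^{z,t} = \delta_{\mu_0(z)}$.

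With uniqueness in hand, fix any open neighborhood $U$ of $\mu_t(z)$ in $P$. The set $K = P\setminus U$ is compact, and strict convexity together with uniqueness gives $c := \inf_{x\in K} I^{z,t}(x) > 0$. Applying Proposition \ref{LDT} to $K$ yields
\[
\frac{1}{k}\log\mu_k^{z,t}(K) \;\le\; -c + o(1),
\]
so $\mu_k^{z,t}(K)\to 0$ and hence $\mu_k^{z,t}(U) \to 1$. As $U$ was arbitrary, $\mu_k^{z,t}\to \delta_{\mu_t(z)}$ weakly. Since $P$ is compact and $x\mapsto x$ is bounded and continuous on $P$, integrating against this test function gives $\int_P x\, d\mu_k^{z,t}(x) \to \mu_t(z)$, which is the first assertion of the lemma.

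The main delicate point is less the pointwise statement than its later use for $C^1$ convergence: one must keep the level sets $\{I^{z,t}\le\varepsilon\}$ shrinking around $\mu_t(z)$ uniformly in $(z,t)$. This uniformity follows from the uniform strict convexity of $u_0$ (respectively $u_F$) built into the canonical form \eqref{CANSYMPOT}, combined with the uniformity of Proposition \ref{LDT} and the Lipschitz continuity of $(t,z)\mapsto \mu_t(z)$ established in Theorem \ref{LIP}; these ingredients will let one bootstrap the weak convergence above to uniform convergence of $\int x\, d\mu_k^{z,t}$ and of $\int(R-f(x))\,d\mu_k^{z,t}$, which is the content of the remaining assertions of Proposition \ref{LLNMUZT}.
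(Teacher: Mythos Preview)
Your argument is correct. The key points---strict convexity of $I^{z,t}$ on the relevant face via the strict convexity of $u_0$ (resp.\ $u_F$), uniqueness of its zero at $\mu_t(z)$ via Proposition~\ref{PHITFORM}, and then measure concentration from the tilted upper bound of Proposition~\ref{LDT}---are all valid, and the passage from $\mu_k^{z,t}(U)\to 1$ for every neighborhood $U$ of $\mu_t(z)$ to weak convergence $\mu_k^{z,t}\to\delta_{\mu_t(z)}$ is legitimate on the compact set $P$.

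The paper's own proof of Lemma~\ref{PROPLEM} takes a different route. Rather than invoking Proposition~\ref{LDT} directly, it computes the scaling limit of the logarithmic moment generating function of $\mu_k^{z,t}$ via Varadhan's Lemma, identifies it (up to an additive constant) with $\lcal(u_0+tf)(\rho+\log|z|)$, observes that this is $C^1$ in $\rho$ by Proposition~\ref{U+TFC1}, and then appeals to Ellis's Theorem~II.6.3 (the statement that $\E(W_n/a_n)\to\nabla c(0)$) to conclude both the convergence of means and the weak convergence to a point mass. Your approach bypasses the moment generating function and Ellis's theorem entirely; it is more elementary, and in fact it is essentially the same decomposition (near/far from the unique minimizer, with the far part killed by the tilted upper bound) that the paper itself uses in the \emph{subsequent} Lemma~\ref{PSI} to treat $\int\psi\,d\mu_k^{z,t}$ for general $\psi\in C(\bar P)$. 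So you have effectively merged the two lemmas into a single argument. The paper's detour through $\Lambda^{z,t}$ does buy an explicit identification of the limiting MGF with the \kahler potential, but for the bare statement of Lemma~\ref{PROPLEM} your route is cleaner.
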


\begin{proof} We first use   \cite{E}, Theorem II.6.3
(particularly the proof that $\E(W_n/a_n) \to \nabla c(0)$ on page
49) to show that
  $\mu_k^{z,t} \to
\delta_{\mu_t(z)}$ in the weak sense as $k \to \infty.$

The proof uses the logarithmic moment generating function for
$d\mu_k^{z,t}$, defined  as before by
$$\begin{array}{lll} \Lambda^{z, t}(\rho) & = &  \lim_{k \to \infty} \frac{1}{k} \log
\int_P e^{k \langle x, \rho \rangle} d\mu_k^{z,t} \\ && \\
& = &  \lim_{k \to \infty} \frac{1}{k} \log \int_P e^{k ( \langle
x, \rho \rangle + F_t(x) } d\mu_k^{z} - \frac{1}{k} \log
Z_k^{z,t}.
\end{array} $$

By Varadhan's Lemma, the first term tends to
$$\sup_{x \in P} \left(\langle x, \rho \rangle + F_t(x)  - I^z(x) \right) $$
and the second tends to
$$\sup_{x \in P} \left( F_t(x)  - I^z(x) \right).  $$
 Thus, we have
\begin{equation} \Lambda^{z,t}(\rho)= \sup_{x \in P} \left(\langle x, \rho \rangle + F_t(x) - I^z(x) \right)
- \sup_{x \in P} \left( F_t(x) - I^z(x) \right).
\end{equation}
Up to the constant $R t$, the first term defines the Legendre
transform of the strictly convex function $I^z + t f$ and since
the second term is constant in $\rho$, $\Lambda^{z,t}(\rho)$  is a
strictly convex function of $\rho$, which up to a constant equals
\begin{equation} \label{LAMBDAZT} \left\{ \begin{array}{ll}
\lcal(u_0 + t f) (\rho + \log |z|) = \psi_t(\rho + \log |z|) +
\phip(\log |z|) , & z
\in M^o; \\ &\\
\lcal_F(u_0 + t f)(\rho'' + \log |z''|) = \psi_t(\rho'' + \log
|z''|) + \phi_F(|z''|), & z \in M_F. \end{array} \right.
\end{equation}
In the evaluation on $F$ we note that the supremum is taken for $x
\in F$ and hence $\langle x, \rho > = \langle x'', \rho'' \rangle$
where $\rho''$ is the component of $\rho$ along $F$.

By Proposition \ref{U+TFC1}, $\Lambda^{z,t}(\rho)$ is a $C^1$
function of $\rho \in \R^m$ for all $z \in M^o$ and it is a $C^1$
function of $\rho'' \in \R^{m -k}$ for $z \in M_F$.  Then by
\cite{E}, Theorem II.6.3 (particularly the proof that $\E(W_n/a_n)
\to \nabla c(0)$ on page 49) it follows that
$$ \lim_{k \to \infty} \int_P x d\mu_k^{z,t} \to  \left\{ \begin{array}{ll} \nabla (\psi_t + \phip)(
\log |z|), & z \in M^o; \\ & \\
\nabla'' (\psi_t + \phip) ( \log |z''|), & z \in M_F. \end{array}
\right.
$$
By Definition (\ref{MUTDEF}), the limit equals  $\mu_t(z)$ in all
cases, and Theorem II. 6. 3 of \cite{E} shows that $\mu_k^{t, z}
\to \delta_{\mu_t(z)}$.

\end{proof}

\subsection{$C^1$ Convergence and Varadhan's Lemma}

We now use the  large deviations principle of the previous section
to prove that the limit in Lemma \ref{PROPLEM} is $C^0$ and at the
same to oomplete the proof of Proposition \ref{LLNMUZT}.

\begin{proof} Each first derivative has the form
$$\int_{\bar{P}} \psi(x) \; d\mu_k^{t, z}(x), $$
where $d\mu_k^{t, z}$ is the time-tilted measure (\ref{TIMETILT})
and $\psi \in C(\bar{P})$. Indeed, on the open orbit,

\begin{eqnarray*}
\frac{1}{k}\nabla_{\rho} \log \sum_{\alpha \in k P \cap \Z^m} e^{2
t k  ( R - f(\frac{\alpha}{k }))} \frac{|s_{\alpha}(z)|^2_{h
^k}}{Q_{h^k}(\alpha)} &=& \frac{\sum_{\alpha \in k P \cap \Z^m
}\left( \frac{\alpha}{k} - \mu(z) \right) e^{2 t k  ( R -
f(\frac{\alpha}{k }))} \frac{|s_{\alpha}|^2_{h^k}
}{\Q_{h^k}(\alpha)} } {\sum_{\alpha \in k P \cap \Z^m } e^{2 t k
( R - f(\frac{\alpha}{k }))}
\frac{|s_{\alpha}|^2_{h^k} }{Q_{h^k}(\alpha)} }\\ \\
& = & \int \left( x - \mu(z) \right) d\mu_k^{z,t}(x)
\end{eqnarray*}
Near $\dcal$ we cannot express derivatives in terms of
$\nabla_{\rho}$ but must rather use $\nabla_z$ as in \cite{SoZ2}.
This only has the effect of changing  the sum over  $\alpha$ to a
sum over $\alpha_n \not= 0$ and then changing $\alpha \to \alpha -
(0, \dots, 1_n, \dots, 0)$.

Also, the  $\partial_t$ derivative has the form
\begin{equation} \begin{array}{lll}
\frac{1}{k} \frac{\partial} {\partial t}\log \sum_{\alpha \in k P
\cap \Z^m} e^{2 t k  ( R - f(\frac{\alpha}{k }))}
\frac{|S_{\alpha}(z)|^2_{h^k}}{Q_{h^k}(\alpha)} & = & \frac{1}{k}
\frac{\sum_{\alpha} e^{2 t k  ( R - f(\frac{\alpha}{k }))}
\left(2 k (R - f(\frac{\alpha}{k}) \right)
\frac{e^{k\langle\alpha, \rho
\rangle}}{Q^k(\alpha)}}{\left(\sum_{\alpha} e^{2 t k  ( R -
f(\frac{\alpha}{k }))}  \frac{e^{k\langle\alpha, \rho
\rangle}}{\QQ_{h_t^k}(\alpha)}\right)}\\ && \\
& = & \int_P \left(2  (R - f(x) \right) d\mu_k^{z,t}(x).
 \end{array}
\end{equation}

Hence to prove the result we need only to show:
\begin{lem}\label{PSI}  For any continuous $\psi \in C(\bar{P})$,
$\int_{\bar{P}} \psi(x) \; d\mu_k^{t, z}(x) = \psi(\mu_t(z)) +
o(1), $ with uniform remainder. \end{lem}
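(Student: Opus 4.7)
The plan is to deduce the lemma from uniform concentration of $\mu_k^{z,t}$ at $\delta_{\mu_t(z)}$, the pointwise version of which is already recorded in Lemma~\ref{PROPLEM}. Using that $\mu_k^{z,t}$ is a probability measure, write
$$\int_{\bar P}\psi\,d\mu_k^{z,t} - \psi(\mu_t(z)) = \int_{\bar P}\bigl[\psi(x)-\psi(\mu_t(z))\bigr]\,d\mu_k^{z,t}(x).$$
For any $\eta>0$, uniform continuity of $\psi$ on the compact polytope $\bar P$ furnishes $\epsilon>0$ so that $|\psi(x)-\psi(y)|<\eta$ whenever $|x-y|<\epsilon$. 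Splitting the integral at the ball of radius $\epsilon$ around $\mu_t(z)$, the near part is bounded by $\eta$ while the far part is bounded by $2\|\psi\|_\infty\,\mu_k^{z,t}(K_\epsilon(z,t))$, where $K_\epsilon(z,t):=\{x\in\bar P:|x-\mu_t(z)|\ge\epsilon\}$.

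The heart of the argument is to show that $\mu_k^{z,t}(K_\epsilon(z,t))\to 0$ uniformly in $(z,t)\in M\times[0,L]$. Since $K_\epsilon(z,t)$ is compact, Proposition~\ref{LDT} gives
$$\frac{1}{k}\log\mu_k^{z,t}\bigl(K_\epsilon(z,t)\bigr) \le -\inf_{x\in K_\epsilon(z,t)} I^{z,t}(x) + o(1),$$
with the $o(1)$ uniform in $(z,t)$. By Proposition~\ref{PHITFORM}, $I^{z,t}\ge 0$ with equality at the unique point $\mu_t(z)$, so what must be established is the uniform gap
$$c(\epsilon):=\inf_{(z,t)\in M\times[0,L]}\;\inf_{x\in K_\epsilon(z,t)} I^{z,t}(x) > 0.$$

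To prove this gap I would use the explicit form of $I^{z,t}$. On the open orbit $z\in M^o$, one has $I^{z,t}(x) = u_0(x)+tf(x) - \langle x,\log|z|\rangle + \phip(z) - \psi_t(z)$, and the strong convexity of $u_0+tf$ on $\bar P$ with modulus $c>0$ independent of $t\in[0,L]$ (the same $c$ used in the proof of Lipschitz continuity of $\mu_t$, where $\nabla^2 u_0\ge cI$ and $tf$ is convex) yields $I^{z,t}(x)\ge\tfrac{c}{2}|x-\mu_t(z)|^2\ge c\epsilon^2/2$ on $K_\epsilon(z,t)$. For $z\in\mu_0^{-1}(F^o)$, the rate function $I^z$ is $+\infty$ off $\bar F$, so $\mu_k^{z,t}$ is supported on $\bar F\cap\tfrac1k\Z^m$ by Proposition~\ref{FACE} and the same strong-convexity estimate applies to $u_F+tf$ on $F$; the continuity of $\mu_t$ on $M$ (Theorem~\ref{LIP}) lets these face-wise quadratic lower bounds be glued into a single constant $c(\epsilon)>0$.

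Granted the uniform gap, the far-part contribution is $O(\exp(-k\,c(\epsilon)/2))$ uniformly in $(z,t)$ and hence vanishes as $k\to\infty$, leaving a total error bounded by $\eta + o_k(1)$. Since $\eta$ was arbitrary, the lemma follows. The main obstacle is precisely this uniform gap across the stratification of $M$: the rate function $I^z$ jumps from finite to $+\infty$ as $z$ moves from one orbit to another, and $\mu_t(z)$ likewise migrates between faces; the saving grace is that for each $z$ the measure $\mu_k^{z,t}$ lives on the same face as $\mu_t(z)$, which reduces the estimate to face-by-face strong convexity, made uniform by the Lipschitz property of $\mu_t$.
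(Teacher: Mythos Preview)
Your proof is correct and follows essentially the same route as the paper: split the integral at a ball of radius $\epsilon$ around $\mu_t(z)$, control the far piece via the uniform tilted large-deviations upper bound (Proposition~\ref{LDT}), and control the near piece by uniform continuity of $\psi$. The paper's own argument simply asserts the key uniform gap $\inf_{(z,t)}\inf_{|x-\mu_t(z)|>\epsilon} I^{z,t}(x)>0$ without further justification, whereas you supply an explicit quadratic lower bound via strong convexity of $u_0+tf$ (and of $u_F+tf$ on each face), which is a welcome addition; your appeal to the Lipschitz continuity of $\mu_t$ for ``gluing'' is not really needed, since the face-wise bounds already share a common modulus and the support of $\mu_k^{z,t}$ always lies in the closed face determined by $z$.
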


To prove the Lemma we first note that
 $I^{z,t}(x)$ attains
its infimum at a unique point $\bar{x}(z,t) = \mu_t(z)$ where
$\mu_t$ is the moment map of $\psi_t$.

We have,
$$\int_{\bar{P}} \psi(x) \; d\mu_k^{t, z}(x) = \int_{|x - \mu_t(z)| \leq \epsilon}
 \psi(x) \; d\mu_k^{t, z}(x) + \int_{|x - \mu_t(z)| >  \epsilon}
 \psi(x) \; d\mu_k^{t, z}(x). $$
 By Proposition \ref{LDT}, the  second term is bounded by
$$\begin{array}{lll} \max_{P} | \psi| \cdot \; \mu_k^{z, t} (|x - \mu_t(z)| >
\epsilon) & \leq  & \max_{P} | \psi| \cdot \; e^{- k \left(
\inf_{|x - \mu_t(z)|
> \epsilon} I^{z, t}(x) + o(1) \right)}\\ && \\
& = & o(1), \end{array}$$ where the remainder $o(1)$ is uniform
for any $\epsilon > 0$. Since $\inf_{z \in M, t \in [0, L]}
\inf_{|x - \mu_t(z)|
> \epsilon} I^{z, t}(x) > 0$, the final estimate is uniform in $t,
z$.

We then consider the first term.  Since $\psi$ is uniformly
continuous on $\bar{P}$, there exists $\epsilon $ for any given
$\delta > 0$ so that $|\psi(x) - \psi(\mu_t(z))| \leq \delta$ if
$|\mu_t(z) - x| \leq \epsilon$. The first term is then
$\psi(\mu_t(z))$ plus $O(\delta)$. Choosing $\delta$ sufficiently
small and then $k$ sufficiently large completes the proof of the
Lemma.
\end{proof}

Note that this gives another proof that $\psi_t \in C^1([0, L]
\times M)$.

\end{document}